\newtheorem{theorem}{Theorem}[section]
\newtheorem{proposition}[theorem]{Proposition}
\newtheorem{corollary}[theorem]{Corollary}
\newtheorem{lemma}[theorem]{Lemma}
\theoremstyle{definition}
\newtheorem{definition}[theorem]{Definition}
\newtheorem{example}[theorem]{Example}
\newtheorem{notation}[theorem]{Notation}
\newtheorem{remark}[theorem]{Remark}
\newcommand{\NN}{\mathbb{N}}
\newcommand{\ZZ}{\mathbb{Z}}
\newcommand{\RR}{\mathbb{R}}
\newcommand{\field}{\Bbbk}
\newcommand{\pf}{\mathfrak{p}_F}
\newcommand{\pg}{\mathfrak{p}_G}
\newcommand{\<}{\langle}
\renewcommand{\>}{\rangle}
\newcommand{\minus}{\smallsetminus}
\DeclareMathOperator{\mult}{mult}
\DeclareMathOperator{\stdMono}{std}
\DeclareMathOperator{\stdPairs}{stdPairs}
\colorlet{idealcolor}{black}
\colorlet{stdmcolor}{blue!60}
\colorlet{idealregioncolor}{gray!20}
\newcommand{\twofigure}{0.498\linewidth}
\newcommand{\fourfigure}{0.497\linewidth}
\newcommand{\threefigure}{0.329\linewidth}
\newcommand{\threedtwofigure}{0.497\linewidth}
\begin{document}

\title{Standard pairs for monomial ideals in semigroup rings}
\author[Laura Felicia Matusevich]{Laura Felicia Matusevich}
\address[Laura Felicia Matusevich]{Department of Mathematics \\
Texas A\&M University \\ College Station, TX 77843.}
\email[Laura Felicia Matusevich]{laura@math.tamu.edu}
\author[Byeongsu Yu]{Byeongsu Yu}
\address[Byeongsu Yu]{Department of Mathematics \\
Texas A\&M University \\ College Station, TX 77843.}
\email[Byeongsu Yu]{byeongsu.yu@math.tamu.edu}

\begin{abstract}
We extend the notion of standard pairs to the context of monomial
ideals in semigroup rings. Standard pairs can be used as a data structure
to encode such monomial ideals, providing an alternative to
generating sets that is well suited to computing
intersections, decompositions, and multiplicities. We give algorithms to
compute standard pairs from generating sets and vice versa and make all of our
results effective. 
We assume that the underlying semigroup ring is positively graded, but
not necessarily normal.
The lack of normality is at the root of most challenges, subtleties, and innovations in this work.
\end{abstract}

\subjclass[2020]{Primary 13F65, 05E40, 20M25, 68W30; Secondary 13F55, 14M25,
  52B20, 90C90}

\maketitle

\section{Introduction}
\label{sec:intro}

The polynomial ring on $d$ variables over a field is
$\ZZ^{d}$-graded ring, where the degree of a monomial is defined to be
its exponent 
vector. This is a \emph{fine grading}: every graded piece is a vector space
over the base field of dimension at most one. From this point of view,
a monomial ideal is a $\ZZ^{d}$-homogeneous ideal. Affine semigroup
rings are also finely graded, and it makes sense to talk about
monomial ideals in this context as well. Monomial ideals in polynomial
rings are a mainstay of combinatorial commutative algebra, and have
been extensively studied (see for instance, the
texts~\cites{MR1453579,MR2110098}). In contrast, much less is known
about 
monomial ideals in affine semigroup rings (but see~\cites{MR1481087, MR1892318,
  MR2168288}). This is not surprising, as general semigroup rings 
do not satisfy many properties the polynomial ring enjoys.

A monomial ideal $I$ is determined by the monomials that
belong to $I$, but also by the monomials that do \emph{not} belong to
$I$, which are known as the \emph{standard monomials} of $I$. Usually, we
encode a monomial ideal through a (finite) monomial generating
set, a description that is best suited to working with the monomials in
$I$. Standard pairs, introduced in~\cite{MR1339920}, give a
finite way of encoding the standard monomials of a monomial ideal in a
polynomial ring. If $I$ and $J$ are monomial ideals, the set of
standard monomials of $I\cap J$ is the union of the standard monomials
of $I$ and $J$. This makes standard pairs particularly well-adapted to
tasks involving intersections and decompositions of monomial ideals.
The main goal of this article is to extend this point of view to
monomial ideals in semigroup rings.

Beyond their original use in~\cite{MR1339920} to give combinatorial
bounds for degrees of projective schemes, standard pairs have been used
to prove properties of initial ideals of toric
ideals~\cites{MR1682705, MR1993049, MR2142838}, in applications related to
optimization~\cite{MR1700541}, in combinatorial settings~\cite{MR2046082}
and to compute series solutions of hypergeometric
systems~\cite{MR1734566}. An algorithm for computing standard pairs is
given in~\cites{MR1734566, MR1949544}, and is
implemented in the computer algebra system \texttt{Macaulay2}~\cite{M2}.

The standard pair definition given in~\cite{MR1339920} naturally extends to monomial
ideals in semigroup rings. However, even in the normal case,
standard pairs in this context exhibit behavior that is not present
over the polynomial ring. Nevertheless, basic results about standard
pairs still hold (with different proofs): A monomial ideal $I$
has finitely many standard 
pairs (Theorem~\ref{thm:finitelyManyStdP}). The associated primes of $I$ can be read
off immediately from its standard pairs, and the
standard pairs of $I$ can be used to give combinatorial 
primary and irreducible decompositions of $I$
(Theorem~\ref{thm:primaryDecomposition} and
Proposition~\ref{prop:irreducibleDecomposition}). Finally, counting 
(equivalence classes) of standard pairs yields multiplicities of
associated primes (Proposition~\ref{prop:multiplicity}). 

We are particularly concerned with the computational aspects of standard
pairs. This is motivated by the difficulty of computing 
combinatorial structures associated with general binomial ideals (ideals
generated by polynomials with at most two terms). Since an
affine semigroup ring is isomorphic to the quotient of a polynomial
ring modulo a prime binomial ideal, the quotient of an affine
semigroup ring by a monomial ideal is isomorphic to the quotient of a
polynomial ring modulo the sum of a prime binomial ideal and a
monomial ideal. In other words, monomial ideals in semigroup rings can be identified
with special kinds of binomial ideals.

The general study of binomial ideals was initiated
in~\cite{MR1394747}, where it was shown that binomial 
ideals can have a primary decomposition consisting of binomial ideals (when the base
field is algebraically closed). Specialized algorithms for finding
such decompositions can be found in~\cite{MR1394747} (see
also~\cites{MR1784749, MR2652314, MR3556446}). Combinatorial structures
controlling the decompositions of binomial ideals were given
in~\cites{MR2593293, MR3267140}, but there are currently no known
algorithms to compute these structures, even if a primary decomposition is known by
other means. 

Standard pairs represent a different combinatorial approach to
decompositions of our special of binomial ideals: they are not
the specialization of the structures from~\cites{MR2593293, MR3267140}. Moreover, one of
our main results gives a method to compute the standard pairs of a
monomial ideal from a generating set
(Theorem~\ref{thm:computeStdPairs}). We also provide a method to
compute a generating set of a monomial ideal given its standard pairs
(Theorem~\ref{thm:pairsToGens}). 
Using standard pairs, we
describe a method to produce 
an irredundant irreducible decomposition of a monomial ideal in a
semigroup ring
(Theorem~\ref{thm:computeIrreducibleDecomposition}). An irreducible decomposition algorithm 
already existed in the case that the underlying semigroup ring is
normal~\cite{MR2168288}. However, the general case given in
Theorem~\ref{thm:computeIrreducibleDecomposition} was indicated as an
open problem in the notes of~\cite {MR2110098}*{Chapter~11}. Finally,
computing intersections of monomial ideals using standard pairs is
particularly straightforward (Remark~\ref{rmk:computeIntersections}).

As with most computations involving affine semigroup rings, our
procedures for finding and using the
standard pairs of a monomial ideal use ideas and techniques from
convex discrete optimization. Loosely speaking, our algorithms require
solving multiple integer linear programs (ILPs), which are famously known as NP-complete. However, our specific situation is not as bad as it
sounds: if we fix the ambient affine semigroup ring, then finding
standard pairs is not general integer programming, but integer
programming \emph{in fixed dimension}, which is famously solvable in
polynomial time~\cite{MR727410}. Even further, 
when the ambient ring is fixed, all the ILPs we need to solve arise
from a single known matrix (the matrix of 
generators of the affine semigroup) in a finite number of possible
ways. This means that, after some possibly costly pre-computations
(that can be done once and stored), finding and using standard pairs in
a fixed semigroup ring should not be too computationally intensive. Implementation of these
ideas is done by the \texttt{StdPairs}~\cite{Yu22} package on \texttt{SageMath} developed by the second author. Some more details on this work in progress can be found in Section~\ref{sec:implementation}.


\subsection*{Outline}
This article is organized as follows.
In Section~\ref{sec:preliminaries} we set notation and review
background material. Section~\ref{sec:stdPairs} develops the  
theory of standard pairs and shows how to use standard pairs to give
combinatorial descriptions of primary and irreducible decompositions
of monomial ideals in semigroup rings. In Section~\ref{sec:algorithms}
we describe algorithms to compute and use standard pairs. Algorithms
outlined in this section include: computation of standard pairs given the
generators of a monomial ideal, computation of the generators of a
monomial ideal given its standard pairs, computation of 
irreducible (and primary) decompositions, computation of multiplicities. 
Section~\ref{sec:implementation} explains the authors' project to
implement these methods in the computer algebra system \texttt{SageMath} and \texttt{Macaulay2}.

\subsection*{Acknowledgments}
We are grateful to Sarah Witherspoon, Abraham Mart\'{\i}n del Campo
Sanchez, Gabriela Jeronimo, Jack Jeffries, Siddharth Mahendraker,
Claudiu Raicu, Alexander Yong for conversations we had while working
on this project, especially at the \emph{XXIII Coloquio
Latinoamericano de Algebra} in Mexico City in August 2019, and at
\emph{Algebra, Geometry and Combinatorics Day - XVIIII} in
at the University of Illinois Urbana-Champaign in March 2020.

\section{Preliminaries}
\label{sec:preliminaries}

We adopt the convention that $\NN =\{0,1,2,\dots\}$ is the set of
nonnegative integers and $\field$ is an infinite field.

Throughout this article, $A = \{a_1,\dots,a_n\} \subset \ZZ^d \minus\{0\}$
is a fixed finite set of nonzero lattice points, called a
\emph{configuration}. We may abuse  
notation and use $A$ to also denote the $d\times n$ integer matrix
whose columns are $a_1,\dots,a_n$. We let $\NN A$ be the monoid of
nonnegative integer combinations of $a_1,\dots,a_n$; in the most
commonly used terminology, $\NN A$ is called an \emph{affine
  semigroup}. Similarly $\ZZ A$ is the (free abelian) group of integer
combinations of elements of $A$, and $\RR_{\geq 0} A$ is the cone of
nonnegative real combinations of elements of $A$. To simplify
notation, we assume that $\ZZ A =\ZZ^d$.  (When $\ZZ A\neq
\ZZ^d$ we use $\ZZ A$ as a ground lattice, and our proofs go
through essentially unchanged.)

We also assume that  $\RR_{\geq 0} A$ is \emph{strongly convex cone},
meaning that it contains 
no lines. We point out that
Lemma~\ref{lemma:stdPairMinimality}, which is used in the proof of
Theorem~\ref{thm:finitelyManyStdP}, fails without strong convexity.

We say that $F \subseteq A$ is a \emph{face} of $A$ if $\RR_{\geq 0} F$ is a face of
$\RR_{\geq 0} A$, and $\RR_{\geq 0} F \cap A = F$. In this case, we
also abuse notation and use $F$ to denote both a configuration and
its corresponding matrix (whose columns are the elements of the
configuration). It is known that $\RR_{\geq 0} A$ is strongly convex if and
only if $\{0\}$ is a face of $\RR_{\geq 0} A$. We use the convention that
$F = \varnothing$ refers to the origin as a face of $A$.

\begin{definition}
\label{def:integerDistance}
If $H$ is a \emph{facet} (a codimension one face) of $A$, we define
its \emph{primitive integral support function} $\varphi_H:\RR^d \to
\RR$ by the following properties:
\begin{enumerate}
\item $\varphi_H$ is linear,
\item $\varphi_H(\ZZ^d) = \ZZ$,
\item $\varphi_H(a_i) \geq 0$ for $i=1,\dots,n$,
\item $\varphi_H(a_i) = 0$ if and only if $a_i \in H$.
\end{enumerate}
\end{definition}
Primitive integral support functions give a measure of how far a point
is from a facet of $A$: if $a \in \ZZ^d$, $\varphi_H(a)$ is the number
of hyperplanes parallel to $\RR H$ that pass through integer points,
and lie between $a$ and $\RR H$, with a sign to indicate whether $a$
is on the side of $\RR H$ that contains $\RR_{\geq 0} A$.

We denote by $\NN F$ the affine semigroup generated by a face
$F$, $\RR_{\geq 0} F$ the cone over $F$, and $\RR F$ the real linear span
of $F$. Since $\RR_{\geq 0} F \cap A = F$, we have that $\NN F = \NN A
\cap \RR_{\geq 0} F$.

We work with the semigroup ring $\field[\NN A] =
\field[t^{a_1},\dots,t^{a_n}]$, which is a subring of the Laurent
polynomial ring $\field[t^{\pm}] = \field[t_1^{\pm 1},\dots,t_d^{\pm
  1}]$. This ring has the presentation 
$\field[\NN A] \cong \field[x]/I_A$, where
$\field[x]=\field[x_1,\dots,x_n]$, and $I_A=\< x^u -x^v \mid u,v \in
\NN^n, Au=Av\>$ (here we have used $A$ to denote a matrix). Since
$\RR_{\geq 0} A$ is strongly convex, the only multiplicative units in
$\field[\NN A]$ are the nonzero elements of the field $\field$.

The semigroup $\NN A$ is \emph{saturated} if $\RR_{\geq 0} A \cap \ZZ
A = \NN A$. When $\NN A$ is not saturated, $(\RR_{\geq 0} A \cap \ZZ
A)\minus \NN A$ is called the set of \emph{holes} of $\NN A$. It is a well-known result that $\NN A$ is saturated if and only if the domain $\field[\NN A]$
is normal (meaning that it is integrally closed over its field
of fractions).

The ring $\field[\NN A]$ is $\ZZ A = \ZZ^d$-graded, via $\deg(t^a) =
a$. In the presentation $\field[\NN A] \cong \field[x]/I_A$, the
grading is induced by setting $\deg(x_i)=a_i$. Strong convexity 
$\RR_{\geq 0} A$ means that this is a \emph{positive} grading: the
unique maximal $\ZZ^d$-homogeneous ideal of $\field[\NN A]$ is
$\<t^{a_1},\dots,t^{a_n}\>$.
A $\ZZ^d$-homogeneous ideal in $\field[\NN A]$ is called a \emph{monomial ideal}. Equivalently, a
monomial ideal in $\field[\NN A] \subset \field[t^\pm]$ is an ideal generated by Laurent
monomials.

There is a one to one inclusion reversing correspondence between the
set of monomial prime ideals in $\field[\NN A]$ and the set of faces of $\RR_{\geq 0} A$, given in the following statement.

\begin{lemma}[\cite{MR2110098}*{Lemma~7.10}]
\label{lemma:monomialPrimes}
If $F$ is a face of $A$, the monomial ideal 
\[
\pf = \< t^a \mid a \in \NN A \minus \NN F \>  \subset
\field[\NN A]
\]
is prime. All prime monomial ideals in $\field[\NN A]$ are of this form.
\end{lemma}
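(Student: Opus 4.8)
The plan is to prove the two assertions separately. For the first, I would show that $\field[\NN A]/\pf \cong \field[\NN F]$, which is a domain because $\field[\NN F]$ is a subring of the Laurent polynomial ring $\field[t^{\pm}]$; hence $\pf$ is prime. The key observation is that $\NN A \minus \NN F$ is an ideal of the semigroup $\NN A$: if $a, b \in \NN A$ and $a+b \in \NN F = \NN A \cap \RR_{\geq 0}F$, then $a+b \in \RR_{\geq 0}F$, and since $\RR_{\geq 0}F$ is a face of the cone $\RR_{\geq 0}A$ we get $a, b \in \RR_{\geq 0}F$, whence $a, b \in \NN F$. Consequently $\pf$ is the $\field$-span of $\{t^a \mid a \in \NN A \minus \NN F\}$, so $\field[\NN A]/\pf$ has $\field$-basis $\{t^a \mid a \in \NN F\}$ with the multiplication inherited from $\field[\NN F]$, giving the desired isomorphism.

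For the converse, let $\pp$ be a prime monomial ideal and set $S = \{a \in \NN A \mid t^a \notin \pp\}$. Primeness and properness of $\pp$ give: $0 \in S$; $S + S \subseteq S$; and $S$ is \emph{saturated} in $\NN A$, meaning $a + b \in S$ with $a, b \in \NN A$ forces $a, b \in S$ (the contrapositive of $\pp$ being an ideal). Let $F$ be the smallest face of $\RR_{\geq 0}A$ with $S \subseteq \RR_{\geq 0}F$; this is well defined because the collection of faces containing $S$ is nonempty and closed under intersection. I claim $\pp = \pf$. Since $S \subseteq \RR_{\geq 0}F$, every $a \in \NN A \minus \NN F = \NN A \minus (\NN A \cap \RR_{\geq 0}F)$ lies outside $S$, i.e. $t^a \in \pp$; this gives $\pf \subseteq \pp$. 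For the reverse inclusion it suffices to prove $\NN F \subseteq S$, and since $S$ is a semigroup and $\NN F$ is generated by $\{a_i \in F\}$, it suffices to show $a_i \in S$ for every $a_i \in F$.

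To do this I would first locate a point of $S$ in the relative interior of $\RR_{\geq 0}F$. If $S = \{0\}$ this is trivial; otherwise, for $s \in S$ let $G(s)$ be the smallest face of $\RR_{\geq 0}A$ containing $s$, so $s \in \operatorname{relint}(G(s))$ and $G(s) \subseteq \RR_{\geq 0}F$. Using that $G(s_1+s_2)$ is the join $G(s_1) \vee G(s_2)$ (a face contains $s_1+s_2$ iff it contains both $s_1$ and $s_2$) and that $\RR_{\geq 0}F$ is the join of all the $G(s)$, finiteness of the face lattice yields $s_1, \dots, s_r \in S$ with $G(s_1) \vee \cdots \vee G(s_r) = \RR_{\geq 0}F$; then $s^{\ast} := s_1 + \cdots + s_r \in S$ satisfies $G(s^{\ast}) = \RR_{\geq 0}F$, i.e. $s^{\ast} \in \operatorname{relint}(\RR_{\geq 0}F)$. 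Now write $s^{\ast}$ as a strictly positive combination of the generators $\{a_k \in F\}$ (a point in the relative interior of a cone is a strictly positive combination of any generating set) and clear denominators: $N s^{\ast} = \sum_{a_k \in F} m_k a_k$ with each $m_k \geq 1$ and $N \geq 1$. For a fixed $a_i \in F$ we then have $N s^{\ast} - a_i = (m_i-1)a_i + \sum_{k \neq i} m_k a_k \in \NN A$, and $N s^{\ast} = (N s^{\ast} - a_i) + a_i$ with both summands in $\NN A$ and $N s^{\ast} \in S$; saturation of $S$ forces $a_i \in S$. This proves $\NN F \subseteq S$; combined with $S \subseteq \RR_{\geq 0}F \cap \NN A = \NN F$ we obtain $S = \NN F$, hence $\pp = \langle t^a \mid a \in \NN A \minus S \rangle = \pf$. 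The step I expect to be most delicate is precisely the passage from the ``combinatorial face'' $S$ to the geometric face $F$ — verifying that the generators of the minimal enclosing face actually lie in $S$ — since this is where the convex geometry of $\RR_{\geq 0}A$ must be reconciled with the semigroup structure.
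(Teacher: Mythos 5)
Your proof is correct. Note that the paper itself offers no proof of this lemma --- it is imported verbatim from Miller--Sturmfels \cite{MR2110098}*{Lemma~7.10} --- so there is no internal argument to compare against; what you have written is a legitimate self-contained justification. The forward direction ($\field[\NN A]/\pf \cong \field[\NN F]$ via the observation that $\NN A \minus \NN F$ is a semigroup ideal, using the face property of $\RR_{\geq 0}F$) is exactly the standard argument. For the converse, the source text essentially defines a ``face'' of the semigroup combinatorially (a subsemigroup whose complement is a semigroup ideal), which makes the classification of monomial primes nearly tautological and pushes all the real content into identifying such combinatorial faces with geometric faces of $\RR_{\geq 0}A$; your second half supplies precisely that identification, and you correctly flag it as the delicate step. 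Your argument --- produce $s^{\ast} \in S \cap \operatorname{relint}(\RR_{\geq 0}F)$ via joins in the finite face lattice, write $Ns^{\ast}$ as an $\NN$-combination of the generators of $F$ with all coefficients at least $1$, and invoke saturation of $S$ --- is sound. The only point worth making explicit is that a relative-interior point of $\RR_{\geq 0}F$ is a priori a strictly positive \emph{real} combination of the generators; since the solution set of $\sum_k \lambda_k a_k = s^{\ast}$ is an affine subspace defined over $\mathbb{Q}$ and the positivity condition cuts out a nonempty relatively open subset, rational (hence, after clearing denominators, integral) coefficients exist, which is what your clearing-of-denominators step silently requires.
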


\begin{notation}
\label{notation:divisibility}
We emphasize that, throughout this article, divisibility refers to the
ring $\field[\NN A]$, and not to $\field[t^\pm]$. To be completely
precise, $t^{a'} \mid t^a$ means that $a-a' \in \NN A$. We abuse
terminology, and also state that $a'$ divides $a$ in this case.
\end{notation}

Affine semigroup rings are Noetherian, as they are quotients of
polynomial rings. This can be restated as a version of Dickson's Lemma.

\begin{lemma}
  \label{lemma:Dickson}
  Let $S$ be a nonempty subset of $\NN A$ such that no two elements of
  $S$ are comparable with respect to divisibility. Then $S$ is finite.
\end{lemma}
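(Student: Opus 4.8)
The plan is to reduce the statement to the Noetherianity of $\field[\NN A]$ (noted above), via the standard passage to the monomial ideal generated by $S$. First I would set $I = \< t^a \mid a \in S \> \subseteq \field[\NN A]$. Since $\field[\NN A]$ is Noetherian, $I$ is finitely generated; writing each generator as a finite $\field[\NN A]$-combination of the $t^a$ with $a \in S$ and collecting the finitely many exponents that occur, one obtains a \emph{finite} subset $S_0 \subseteq S$ with $\< t^a \mid a \in S_0 \> = \< t^a \mid a \in S \> = I$.

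The key step is then the observation that, for a monomial ideal in $\field[\NN A]$, membership of a monomial is governed by divisibility: if $t^a \in I$, write $t^a = \sum_{a' \in S_0} g_{a'}\, t^{a'}$ with $g_{a'} \in \field[\NN A]$ and extract the $\ZZ^d$-graded component of degree $a$ on both sides; since every graded piece of $\field[\NN A]$ is at most one-dimensional over $\field$, there must be a monomial $t^b$ occurring in some $g_{a'}$ with $a' + b = a$ and $b \in \NN A$. Hence $a - a' \in \NN A$, i.e.\ $a'$ divides $a$ in the sense of Notation~\ref{notation:divisibility}.

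To finish, apply this to an arbitrary $a \in S$: as $t^a \in I$, there is some $a' \in S_0 \subseteq S$ with $a' \mid a$. Because $S$ is an antichain for divisibility, this forces $a = a'$, so $a \in S_0$. Thus $S \subseteq S_0$ is finite.

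There is no genuine obstacle here; the one point that deserves emphasis — and the step that would fail for a non-homogeneous ideal — is the passage from ``$t^a$ lies in the monomial ideal $I$'' to ``$t^a$ is divisible in $\field[\NN A]$ by one of the chosen generators,'' which rests on the fine $\ZZ^d$-grading of $\field[\NN A]$. Everything else is bookkeeping around the Noetherian property.
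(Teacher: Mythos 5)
Your proof is correct and rests on the same two ingredients as the paper's: Noetherianity of $\field[\NN A]$ and the fact that a monomial lies in a monomial ideal exactly when one of the chosen monomial generators divides it. The paper argues by contradiction via an infinite strictly ascending chain of ideals while you invoke finite generation of $\<t^a \mid a\in S\>$ directly (and, to your credit, you actually justify the divisibility--membership equivalence via the fine grading, which the paper asserts without proof), but these are just two faces of the same argument.
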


\begin{proof}
  By contradiction, assume that $S$ contains an infinite sequence
  $\{b_i\}_{i=1}^\infty$. Consider $I_j
  =\<t^{b_1},\dots,t^{b_j}\>$ for $j\geq 1$. Since $t^b \in I_j$ if and only if
  $t^{b_i} | t^b$ for some $1 \leq i \leq j$, we see that $I_1
  \subsetneq I_2 \subsetneq I_3 \subsetneq \cdots$ is an infinite
  ascending chain, which contradicts Noetherianity of $\field[\NN A]$. 
\end{proof}

We close out this section by providing some running examples. We represent
semigroup rings and monomial ideals pictorially by plotting exponent
vectors of monomials. In Figures~\ref{fig:2dafffinesemigp}
and~\ref{fig:3dafffinesemigp}, the (exponents of) standard monomials
of the given ideal are colored blue, while the (exponents of) monomials
in the ideal are colored black. 

\begin{example}
  \label{ex:ideals}
\
\begin{enumerate}[leftmargin=*]
\item  \label{item:2dpoly}
Let $A$ be a $d\times d$ identity matrix. Then $\NN A=\NN^{d}$ and
consequently $\field[\NN A] \cong \field[x_{1},\cdots, x_{d}]$. A face
of $\NN A$ is a set of all nonnegative integral combinations of a
subset of (the columns of) $A$. In Figure~\ref{fig:2dpolyring}, the
shaded region represents the monomial ideal $I= \langle
x^3y, xy^2 \rangle \subset \field[x,y]$.
\item \label{item:2dnormal}
Let $A = \big[\begin{smallmatrix} 1 & 1 & 1 \\ 0 & 1 &
    2 \end{smallmatrix}\big]$. Then $\NN A$ is a saturated
  semigroup, and $\field[\NN A]\cong \field[x, xy, xy^2]$ is a normal
  semigroup ring, a subring of $\field[x,y]$.
  Figure~\ref{fig:2dnormal} illustrates the ideal $\langle
x^2y^2, x^3y \rangle \subseteq \field[\NN A]$.
\item \label{item:3dnormal}
Let $A = \left[\begin{smallmatrix}0 & 1 & 0 & 1 \\ 0 & 0 & 1 & 1 \\
    1 & 1 & 1 & 1 \end{smallmatrix}\right]$. In this case, $\field[\NN A] \cong
\field[z,xz,yz,xyz]$ is a saturated affine semigroup ring. We depict the ideal
$\langle x^2 z^2, x^2yz^2, x^2yz^2 \rangle \subset \field[\NN A]$ in
Figure~\ref{fig:3dnormal}.
\item \label{item:3dnonnormal}
Let $A=\left[\begin{smallmatrix} 0 & 0 & 1 & 1 & 1 & 1 \\ 2 & 0 & 0 &
    1 & 0 & 1 \\ 0 & 2 & 0 & 0 & 1 & 1\end{smallmatrix}\right]$. Then
$\field[\NN A]\cong \field[x,xy,xz,xyz,y^2,z^2]$ is a subring of
$\field[x,y,z]$. In this case, $\NN A$ is not saturated, and the set
of holes is $\{ (a,b,0) \mid (a,b) \in \NN^2 \minus (2\NN \times 2\NN) \}$.
Figure~\ref{fig:3dnonnormal} depicts the ideal $\langle
x,xyz,xyz^{2} \rangle\subset \field[\NN A]$. Holes are represented by
white circles.
\end{enumerate}
\end{example}

\begin{figure*}[t!]
    \centering
    \begin{subfigure}[t]{\twofigure}
        \centering
        \begin{tikzpicture}[scale=\linewidth/14cm]
	\fill[idealregioncolor] (1,6) -- (1,2) -- (3,2) -- (3,1) -- (6,1) -- (6,6) -- cycle ;
	  \draw[step=1cm,gray,very thin] (-1,-1) grid (6,6);
	  \draw [<->] (0,7) node (yaxis) [above] {$y$}
       	 |- (7,0) node (xaxis) [right] {$x$};
	\draw (-1.5,0) -- (0,0);
	\draw (0,-1.5) -- (0,0);

	\foreach \x in {0,...,6}
	\foreach \y in {0,...,6}
	\draw[black,fill=stdmcolor] (\x,\y) circle (3 pt);

	\foreach \x in {3,...,6}
	\foreach \y in {1,...,6}
	\draw[black,fill=idealcolor] (\x,\y) circle (3 pt);

	\foreach \x in {1,...,6}
	\foreach \y in {2,...,6}
	\draw[black,fill=idealcolor] (\x,\y) circle (3 pt);
	\end{tikzpicture}
	 \caption{$\langle x^3y, xy^2 \rangle$ (shaded region) in $\field[x,y]$}
	 \label{fig:2dpolyring}
    \end{subfigure}
    ~ 
    \begin{subfigure}[t]{\twofigure}
        \centering
        \begin{tikzpicture}[scale=\linewidth/14cm]
	\fill[idealregioncolor] (12,2) -- (16,2) -- (16,6) -- (14,6) -- cycle ;
	\fill[idealregioncolor] (13,1) -- (16,1) -- (16,6) -- (15.5,6) -- cycle ;

	  \draw[step=1cm,gray,very thin] (9,-1) grid (16,6);
	  \draw [<->] (10,7) node (yaxis) [above] {$y$}
       	 |- (17,0) node (xaxis) [right] {$x$};
	\draw (8.5,0) -- (10,0);
	\draw (10,-1.5) -- (10,0);
	\draw[black,<->](13.5,7) -- (10,0) -- (16,0)  ;

	\foreach \x in {10,...,16}
	\draw[black,fill=stdmcolor] (\x,0) circle (3 pt);
	\foreach \x in {10,...,13}
	{
		\draw[black,fill=stdmcolor] (\x,2*\x-20) circle (3 pt);
	};
	\foreach \x in {10,...,12}
	{
		\draw[black,fill=stdmcolor] (\x+1,1) circle (3 pt);
		\draw[black,fill=stdmcolor] (\x+1,2*\x-20+1) circle (3 pt);
	};
	\foreach \x in {10,...,14}
	\draw[black,fill=idealcolor] (2+\x,2) circle (3 pt);
	\foreach \x in {10,...,13}
	\draw[black,fill=idealcolor] (3+\x,1) circle (3 pt);
	\foreach \x in {13,...,16}
	\foreach \y in {3,4}
	\draw[black,fill=idealcolor] (\x,\y) circle (3 pt);
	\foreach \x in {14,...,16}
	\foreach \y in {5,6}
	\draw[black,fill=idealcolor] (\x,\y) circle (3 pt);
	\end{tikzpicture}
        \caption{$\langle x^2y^2, x^3y \rangle$ (shaded region) in $\field[x,xy,xy^2]$}
        \label{fig:2dnormal}
    \end{subfigure}
    \caption{Examples of ideals in two-dimensional affine semigroup rings}
    \label{fig:2dafffinesemigp}
\end{figure*}

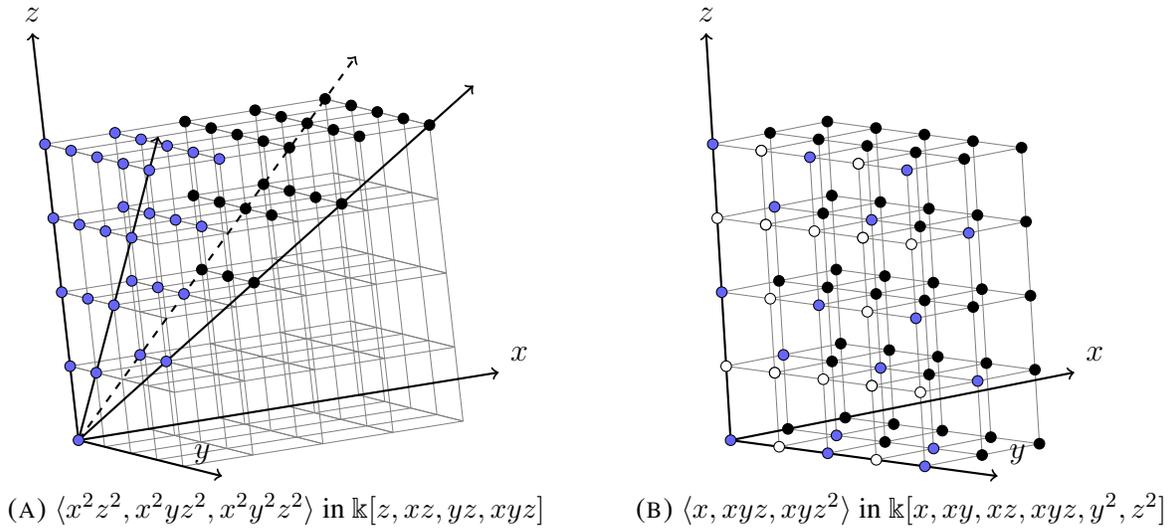
\begin{figure*}[t!]
    \centering
    \begin{subfigure}[t]{\twofigure}
        \centering
	 \tdplotsetmaincoords{90}{90} 
        \begin{tikzpicture}[tdplot_main_coords, scale=1]
	\tdplotsetrotatedcoords{40}{-10}{30}
	\foreach \x in {0,...,4}
	{
		\draw[step=1cm,gray,very thin, tdplot_rotated_coords] (\x,0,0) -- (\x,4,0);
		\draw[step=1cm,gray,very thin, tdplot_rotated_coords] (0,\x,0) -- (4,\x,0);
		\draw[step=1cm,gray,very thin, tdplot_rotated_coords] (\x,0,1) -- (\x,4,1);
		\draw[step=1cm,gray,very thin, tdplot_rotated_coords] (0,\x,1) -- (4,\x,1);
		\draw[step=1cm,gray,very thin, tdplot_rotated_coords] (\x,0,2) -- (\x,4,2);
		\draw[step=1cm,gray,very thin, tdplot_rotated_coords] (0,\x,2) -- (4,\x,2);
		\draw[step=1cm,gray,very thin, tdplot_rotated_coords] (\x,0,3) -- (\x,4,3);
		\draw[step=1cm,gray,very thin, tdplot_rotated_coords] (0,\x,3) -- (4,\x,3);
		\draw[step=1cm,gray,very thin, tdplot_rotated_coords] (\x,0,4) -- (\x,4,4);
		\draw[step=1cm,gray,very thin, tdplot_rotated_coords] (0,\x,4) -- (4,\x,4);

		\draw[step=1cm,gray,very thin, tdplot_rotated_coords] (0,\x,0) -- (0,\x,4);
		\draw[step=1cm,gray,very thin, tdplot_rotated_coords] (0,0,\x) -- (0,4,\x);
		\draw[step=1cm,gray,very thin, tdplot_rotated_coords] (1,\x,0) -- (1,\x,4);
		\draw[step=1cm,gray,very thin, tdplot_rotated_coords] (1,0,\x) -- (1,4,\x);
		\draw[step=1cm,gray,very thin, tdplot_rotated_coords] (2,\x,0) -- (2,\x,4);
		\draw[step=1cm,gray,very thin, tdplot_rotated_coords] (2,0,\x) -- (2,4,\x);
		\draw[step=1cm,gray,very thin, tdplot_rotated_coords] (3,\x,0) -- (3,\x,4);
		\draw[step=1cm,gray,very thin, tdplot_rotated_coords] (3,0,\x) -- (3,4,\x);
		\draw[step=1cm,gray,very thin, tdplot_rotated_coords] (4,\x,0) -- (4,\x,4);
		\draw[step=1cm,gray,very thin, tdplot_rotated_coords] (4,0,\x) -- (4,4,\x);
	};
	
	\draw[thick,->,tdplot_rotated_coords] (0,0,0) -- (6,0,0) node[anchor=south west]{$x$}; 
	\draw[thick,->,tdplot_rotated_coords] (0,0,0) -- (0,5.5,0) node[anchor=south east]{$y$}; 
	\draw[thick,->,tdplot_rotated_coords] (0,0,0) -- (0,0,5.5) node[anchor=south]{$z$};
	\draw[thick,->,tdplot_rotated_coords] (0,0,0) -- (0,4.5,4.5);
	\draw[thick,dashed,->,tdplot_rotated_coords] (0,0,0) -- (4.5,0,4.5);
	\draw[thick, ->,tdplot_rotated_coords] (0,0,0) -- (4.5,4.5,4.5);

	\foreach \y in {0,...,4}
	\draw[black,fill=stdmcolor,tdplot_rotated_coords] (0,0,\y) circle (2 pt);
	\draw[black,fill=stdmcolor,tdplot_rotated_coords] (0,1,1) circle (2 pt);
	\draw[black,fill=stdmcolor,tdplot_rotated_coords] (0,1,2) circle (2 pt);
	\draw[black,fill=stdmcolor,tdplot_rotated_coords] (0,2,2) circle (2 pt);
	\draw[black,fill=stdmcolor,tdplot_rotated_coords] (0,1,3) circle (2 pt);
	\draw[black,fill=stdmcolor,tdplot_rotated_coords] (0,2,3) circle (2 pt);
	\draw[black,fill=stdmcolor,tdplot_rotated_coords] (0,3,3) circle (2 pt);
	\draw[black,fill=stdmcolor,tdplot_rotated_coords] (0,1,4) circle (2 pt);
	\draw[black,fill=stdmcolor,tdplot_rotated_coords] (0,2,4) circle (2 pt);
	\draw[black,fill=stdmcolor,tdplot_rotated_coords] (0,3,4) circle (2 pt);
	\draw[black,fill=stdmcolor,tdplot_rotated_coords] (0,4,4) circle (2 pt);

	\foreach \y in {0,...,3}
	\draw[black,fill=stdmcolor,tdplot_rotated_coords] (1,0,1+\y) circle (2 pt);

	\foreach \y in {0,...,3}
	\draw[black,fill=stdmcolor,tdplot_rotated_coords] (1,1+\y,1+\y) circle (2 pt);

	\draw[black,fill=stdmcolor,tdplot_rotated_coords] (1,1,2) circle (2 pt);
	\draw[black,fill=stdmcolor,tdplot_rotated_coords] (1,1,3) circle (2 pt);
	\draw[black,fill=stdmcolor,tdplot_rotated_coords] (1,2,3) circle (2 pt);
	\draw[black,fill=stdmcolor,tdplot_rotated_coords] (1,1,4) circle (2 pt);
	\draw[black,fill=stdmcolor,tdplot_rotated_coords] (1,2,4) circle (2 pt);
	\draw[black,fill=stdmcolor,tdplot_rotated_coords] (1,3,4) circle (2 pt);

	\draw[black,fill=idealcolor,tdplot_rotated_coords] (2,0,2) circle (2 pt);
	\draw[black,fill=idealcolor,tdplot_rotated_coords] (2,1,2) circle (2 pt);
	\draw[black,fill=idealcolor,tdplot_rotated_coords] (2,2,2) circle (2 pt);
	\draw[black,fill=idealcolor,tdplot_rotated_coords] (2,0,3) circle (2 pt);
	\draw[black,fill=idealcolor,tdplot_rotated_coords] (2,1,3) circle (2 pt);
	\draw[black,fill=idealcolor,tdplot_rotated_coords] (2,2,3) circle (2 pt);
	\draw[black,fill=idealcolor,tdplot_rotated_coords] (2,3,3) circle (2 pt);
	\draw[black,fill=idealcolor,tdplot_rotated_coords] (3,0,3) circle (2 pt);
	\draw[black,fill=idealcolor,tdplot_rotated_coords] (3,1,3) circle (2 pt);
	\draw[black,fill=idealcolor,tdplot_rotated_coords] (3,2,3) circle (2 pt);
	\draw[black,fill=idealcolor,tdplot_rotated_coords] (3,3,3) circle (2 pt);
	\draw[black,fill=idealcolor,tdplot_rotated_coords] (2,0,4) circle (2 pt);
	\draw[black,fill=idealcolor,tdplot_rotated_coords] (2,1,4) circle (2 pt);
	\draw[black,fill=idealcolor,tdplot_rotated_coords] (2,2,4) circle (2 pt);
	\draw[black,fill=idealcolor,tdplot_rotated_coords] (2,3,4) circle (2 pt);
	\draw[black,fill=idealcolor,tdplot_rotated_coords] (2,4,4) circle (2 pt);
	\draw[black,fill=idealcolor,tdplot_rotated_coords] (3,0,4) circle (2 pt);
	\draw[black,fill=idealcolor,tdplot_rotated_coords] (3,1,4) circle (2 pt);
	\draw[black,fill=idealcolor,tdplot_rotated_coords] (3,2,4) circle (2 pt);
	\draw[black,fill=idealcolor,tdplot_rotated_coords] (3,3,4) circle (2 pt);
	\draw[black,fill=idealcolor,tdplot_rotated_coords] (3,4,4) circle (2 pt);
	\draw[black,fill=idealcolor,tdplot_rotated_coords] (4,0,4) circle (2 pt);
	\draw[black,fill=idealcolor,tdplot_rotated_coords] (4,1,4) circle (2 pt);
	\draw[black,fill=idealcolor,tdplot_rotated_coords] (4,2,4) circle (2 pt);
	\draw[black,fill=idealcolor,tdplot_rotated_coords] (4,3,4) circle (2 pt);
	\draw[black,fill=idealcolor,tdplot_rotated_coords] (4,4,4) circle (2 pt);

	\end{tikzpicture}
	 \caption{$\langle x^2 z^2, x^2yz^2, x^2y^{2}z^2 \rangle$ in $\field[z,xz,yz,xyz]$}
	 \label{fig:3dnormal}
    \end{subfigure}
    ~ 
    \begin{subfigure}[t]{\twofigure}
        \centering
       \tdplotsetmaincoords{90}{90} 
	\begin{tikzpicture}[tdplot_main_coords, scale=1]
	\tdplotsetrotatedcoords{20}{-10}{30}
	\foreach \x in {0,...,4}
	{
		\draw[step=1cm,gray,very thin, tdplot_rotated_coords] (0,\x,0) -- (0,\x,4);
		\draw[step=1cm,gray,very thin, tdplot_rotated_coords] (0,0,\x) -- (0,4,\x);
		\draw[step=1cm,gray,very thin, tdplot_rotated_coords] (1,\x,0) -- (1,\x,4);
		\draw[step=1cm,gray,very thin, tdplot_rotated_coords] (1,0,\x) -- (1,4,\x);
		\draw[step=1cm,gray,very thin, tdplot_rotated_coords] (2,\x,0) -- (2,\x,4);
		\draw[step=1cm,gray,very thin, tdplot_rotated_coords] (2,0,\x) -- (2,4,\x);
	};
	\foreach \y in {0,...,4}
	\foreach \z in {0,...,4}
	\draw[step=1cm,gray,very thin, tdplot_rotated_coords] (0,\y,\z) -- (2,\y,\z);

	\draw[thick,->,tdplot_rotated_coords] (0,0,0) -- (6,0,0) node[anchor=south west]{$x$}; 
	\draw[thick,->,tdplot_rotated_coords] (0,0,0) -- (0,5.5,0) node[anchor=south west]{$y$}; 
	\draw[thick,->,tdplot_rotated_coords] (0,0,0) -- (0,0,5.5) node[anchor=south]{$z$};

	\foreach \y in {0,...,2}
	\foreach \z in {0,...,2}
	{
		\draw[black,fill=idealcolor,tdplot_rotated_coords] (1,2*\y,2*\z) circle (2 pt);
	};
	\foreach \y in {0,...,1}
	\foreach \z in {0,...,1}
	\draw[black,fill=idealcolor,tdplot_rotated_coords] (1,1+2*\y,1+2*\z) circle (2 pt);

	\foreach \y in {0,...,1}
	\foreach \z in {0,...,1}
	\draw[black,fill=idealcolor,tdplot_rotated_coords] (1,1+2*\y,2+2*\z) circle (2 pt);
	\foreach \y in {0,...,4}
	\foreach \z in {0,...,4}
	\draw[black,fill=idealcolor,tdplot_rotated_coords] (2,\y,\z) circle (2 pt);

	\foreach \y in {0,...,1}
	\foreach \z in {0,...,1}
	{
		\draw[black,fill=white,tdplot_rotated_coords] (0,2*\y+1,2*\z) circle (2 pt);
		\draw[black,fill=white,tdplot_rotated_coords] (0,2*\y+1,1) circle (2 pt);
		\draw[black,fill=white,tdplot_rotated_coords] (0,2*\y,2*\z+1) circle (2 pt);
		\draw[black,fill=white,tdplot_rotated_coords] (0,2*\y+1,3) circle (2 pt);
		\draw[black,fill=white,tdplot_rotated_coords] (0,2*\y+1,4) circle (2 pt);
	};
	\draw[black,fill=white,tdplot_rotated_coords] (0,4,1) circle (2 pt);
	\draw[black,fill=white,tdplot_rotated_coords] (0,4,3) circle (2 pt);

	\foreach \z in {0,...,2}
	\foreach \y in {0,...,2}
	{
		\draw[black,fill=stdmcolor,tdplot_rotated_coords] (0,2*\y,2*\z) circle (2 pt);
	};
	\foreach \z in {0,...,1}
	\foreach \y in {0,...,2}
	{
		\draw[black,fill=stdmcolor,tdplot_rotated_coords] (1,2*\y,1+2*\z) circle (2 pt);
	};
	\foreach \y in {0,...,1}
	{
		\draw[black,fill=stdmcolor,tdplot_rotated_coords] (1,1+2*\y,0) circle (2 pt);
	};

	\end{tikzpicture}
        \caption{$\langle x,xyz,xyz^{2} \rangle$ in $\field[x,xy,xz,xyz,y^2,z^2]$}
        \label{fig:3dnonnormal}
    \end{subfigure}
    \caption{Examples of ideals in three-dimensional affine semigroup rings}
    \label{fig:3dafffinesemigp}
\end{figure*}

\section{Standard Pairs, decompositions, and multiplicities}
\label{sec:stdPairs}

In this section we develop the theory of standard pairs in the context
of monomial ideals in affine semigroup rings. We then use standard pairs to
describe primary and irreducible decompositions of a monomial ideal,
and to compute multiplicities of associated primes. Our first step is
to introduce the general combinatorial framework for this section.

\begin{definition}
  \label{def:Pairs}
  We call $(a,F)$, where $a\in \NN A$ and $F$ is a face of $A$, a pair of $A$. In this case, $(a,F)$ \emph{belongs} to the face $F$.
  \begin{enumerate}[leftmargin=*]
    \item \label{item:prec}
 $(a,F) \prec (b,G)$ denotes the containment $a+\NN F \subset b+ \NN G$. As the notation implies, this 
      gives a partial order among pairs. 
    \item \label{item:overlap}
      We say that $(a,F)$ and $(b,F)$ \emph{overlap} if
      $a-b \in \ZZ F$, equivalently, if $(a+ \NN F) \cap (b + \NN F)
      \neq \varnothing$. Overlapping is an equivalence
      relation among pairs. We emphasize that overlapping is only defined
      for pairs that belong to the same face. The \emph{overlap class} containing $(a,F)$ is denoted $[a,F]$.
      
    \item \label{item:div}
    We say that $(a,F)$ \emph{divides} $(b,G)$ if there
    is $c \in \NN A$ such that $a+c+\NN F \subseteq b+\NN
    G$. This extends the notion of divisibility in $\field[\NN A]$
    (see Notation~\ref{notation:divisibility}) to the pairs of $A$.
    \end{enumerate}
\end{definition}

Overlapping is a special case of divisibility, which means that
divisibility is not an antisymmetric relation, and therefore not a
partial order on pairs. This difficulty is resolved if we
extend the definition of divisibility to overlap classes of pairs.

\begin{lemma}
\label{lemma:divPartialOrder}
Suppose $(a,F)$ divides $(b,G)$. If $(a',F)$ overlaps
$(a,F)$ and $(b',G)$ overlaps $(b,G)$, then $(a',F)$ divides $(b',G)$. In particular, divisibility is a partial order on overlap classes.
\end{lemma}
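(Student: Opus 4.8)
The plan is to break the statement into four tasks: (i) the displayed implication, i.e.\ that divisibility is unchanged if either pair is replaced by an overlapping one; (ii) well-definedness of divisibility on overlap classes; (iii) reflexivity and transitivity; and (iv) antisymmetry on overlap classes. Task (i) is the crux, and (ii) is essentially a corollary: since overlap is symmetric, applying (i) once shows that divisibility of two pairs implies divisibility of any overlapping representatives, and applying it again with the roles of the two representatives exchanged gives the converse, so ``divides'' is constant on each pair of overlap classes and descends. For (iii), reflexivity uses $c=0$, and for transitivity, if $c_1,c_2\in\NN A$ witness $(a,F)\mid(b,G)$ and $(b,G)\mid(e,E)$, then translating $a+c_1+\NN F\subseteq b+\NN G$ by $c_2$ and concatenating with $b+c_2+\NN G\subseteq e+\NN E$ gives $a+(c_1+c_2)+\NN F\subseteq e+\NN E$; all of this only uses that $\NN A$ and $\NN F$ are closed under addition and contain $0$.

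For (i), the key observation is $\ZZ F=\NN F-\NN F$ (and similarly for $G$), so if $(a',F)$ overlaps $(a,F)$ we may write $a-a'=f_1-f_2$ with $f_1,f_2\in\NN F$, that is, $a'+f_1=a+f_2$, and if $(b',G)$ overlaps $(b,G)$ we get $g_1,g_2\in\NN G$ with $b'+g_1=b+g_2$. Given $c\in\NN A$ with $a+c+\NN F\subseteq b+\NN G$, I would then compute
\[
a'+f_1+c+g_2+\NN F=a+f_2+c+g_2+\NN F\subseteq a+c+g_2+\NN F\subseteq g_2+b+\NN G=g_1+b'+\NN G\subseteq b'+\NN G,
\]
the first inclusion because $f_2+\NN F\subseteq\NN F$ and the last because $g_1+\NN G\subseteq\NN G$. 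As $f_1\in\NN F\subseteq\NN A$ and $g_2\in\NN G\subseteq\NN A$, the element $f_1+c+g_2$ lies in $\NN A$ and witnesses $(a',F)\mid(b',G)$.

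The main obstacle is antisymmetry, and the real content there is that mutual divisibility forces the faces to coincide. I would first establish the auxiliary fact that $(a,F)\mid(b,G)$ implies $F\subseteq G$: choosing $c\in\NN A$ with $(a+c)+\NN F\subseteq b+\NN G$ and writing $g_0=(a+c)-b$, which lies in $\NN G$ since $a+c\in b+\NN G$, we get $g_0+\NN F\subseteq\NN G$, hence $g_0+na_i\in\NN G\subseteq\RR_{\geq 0}G$ for every $a_i\in F$ and every $n\geq 1$; dividing by $n$ gives $\tfrac{1}{n}g_0+a_i\in\RR_{\geq 0}G$, and letting $n\to\infty$ (using that the finitely generated cone $\RR_{\geq 0}G$ is closed) yields $a_i\in\RR_{\geq 0}G\cap A=G$, so $F\subseteq G$. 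Now suppose $(a,F)$ and $(b,G)$ divide one another; the auxiliary fact applied in both directions gives $F=G$, and witnesses $c_1,c_2\in\NN A$ satisfy $a+c_1+\NN F\subseteq b+\NN F$ and $b+c_2+\NN F\subseteq a+\NN F$. Concatenating these gives $a+(c_1+c_2)+\NN F\subseteq a+\NN F$, whence $c_1+c_2\in\NN F\subseteq\RR_{\geq 0}F$; since $\RR_{\geq 0}F$ is a face of $\RR_{\geq 0}A$ and $c_1,c_2\in\RR_{\geq 0}A$, the defining property of faces of cones forces $c_1,c_2\in\RR_{\geq 0}F$, hence $c_i\in\NN A\cap\RR_{\geq 0}F=\NN F$. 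Finally, $a+c_1\in b+\NN F$ gives $a-b=(a+c_1-b)-c_1\in\NN F-\NN F=\ZZ F$, so $(a,F)$ overlaps $(b,F)$ and the two overlap classes are equal.

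I expect every step except the limiting argument in the auxiliary fact --- passing from ``$g_0+na_i\in\NN G$ for all $n$'' to ``$a_i\in\RR_{\geq 0}G$'' --- to be routine bookkeeping with inclusions of translates of $\NN A$, $\NN F$, $\NN G$ together with the defining properties of faces; the limiting argument is the one place where the geometry of the cone, rather than the pure combinatorics of the monoid, enters.
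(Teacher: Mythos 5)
Your proof of the displayed implication is essentially the paper's own argument: pick a monoid element moving $a'$ into $a+\NN F$, append the witness $c$ for divisibility, then append a monoid element of $\NN G$ moving into $b'+\NN G$; your $f_1, c, g_2$ play exactly the roles of the paper's $c_1, c_2, c_3$. Where you go beyond the paper is the partial-order claim: the paper dismisses it as ``similarly straightforward'' and gives no details, whereas you supply a complete proof of antisymmetry, which is in fact the only substantive point. Your argument there is correct and worth recording: the auxiliary fact that $(a,F)\mid(b,G)$ forces $F\subseteq G$ (via $g_0+\NN F\subseteq\NN G$, scaling by $1/n$, and closedness of the finitely generated cone $\RR_{\geq 0}G$, then $\RR_{\geq 0}G\cap A=G$), and the face-absorption step ($c_1+c_2\in\NN F$ with $c_1,c_2\in\NN A$ forces $c_1,c_2\in\RR_{\geq 0}F\cap\NN A=\NN F$, using that $\RR_{\geq 0}F$ is a face) are exactly the two places where geometry, not just monoid bookkeeping, is needed; together they show mutual divisibility implies $F=G$ and $a-b\in\ZZ F$, i.e.\ equality of overlap classes. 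So the proposal is correct, matches the paper on the part the paper proves, and fills in the part the paper omits.
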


\begin{proof}
Since $(a,F)$ and $(a',F)$ overlap, we may choose $c_1 \in \NN F$ such
that $a'+c_1 \in a+\NN F$, which implies that
$a'+c_1+\NN F \subseteq a+ \NN F$. As $(a,F)$ divides $(b,G)$, there
is $c_2 \in \NN A$ such that $a+c_2+\NN F \subseteq b+ \NN G$. But
then $a'+c_1+c_2 + \NN F \subset b+\NN G$. Finally, select $c_3 \in
\NN G$ such that $b + c_3 \in b' +\NN G$. Then $a'+c_1+c_2+c_3 +\NN F \subseteq
b' + \NN G$. It follows that divisibility is well defined on overlap
classes of pairs of $A$.
Showing that divisibility is a partial order, in this case, is similarly straightforward.
\end{proof}

\subsection{Standard pairs}

We are now ready to relate the combinatorial notion of pairs to the
algebraic context of monomial ideals.

Let $I$ be a monomial ideal in $\field[\NN A]$. The \emph{standard
  monomials} of $I$ are the monomials in $\field[\NN A]$ that do not
belong to $I$. We denote
\begin{equation}
  \label{eqn:stdM}
  \stdMono(I) = \{ a \in \NN A \mid t^a \notin I \} .
\end{equation}

\begin{definition}
\label{def:standardPairs}
Let $I$ be a monomial ideal in $\field[\NN A]$. A \emph{proper
  pair} of $I$ is a pair $(a,F)$ of $A$ such that $a + \NN F \subseteq \stdMono(I)$.
A \emph{standard pair} of $I$ is a proper pair which is maximal
with respect to $\prec$
(Definition~\ref{def:Pairs}.\ref{item:prec}). The collection of
standard pairs of a monomial ideal $I$ is denoted $\stdPairs(I)$.
\end{definition}

This is the natural extension of the original definition of standard
pairs from~\cite{MR1339920}, although the partial order is reversed.  On the other
hand, standard pairs exhibit behaviors over semigroup rings that do
not occur over polynomial rings, as can be seen in the following examples.

\begin{example}[Continuation of Example~\ref{ex:ideals}]
  \label{ex:ideals2}
  \
\begin{enumerate}[leftmargin=*]
\item 
Consider $\langle x^3y, xy^2 \rangle$ in
$\field[x,y]$. Denote $F=\{(1,0)\}$, $G=\{(0,1)\},$ and $O =
\varnothing$. These subsets of the (columns of) A respectively span
the nonnegative $x$-axis, the nonnegative $y$-axis, and the origin, which are the proper faces of $\RR_{\geq 0} A$.
Our ideal has four standard pairs,
$((0,0),F)$, $((0,0),G)$, $((1,1),O)$, and $((2,1),O)$, depicted in
Figure \ref{fig:2dpolyring_std} using thick lines. In this case,
$((1,1),O)$ divides $ ((2,1),O)$. Thus there are three
maximal standard pairs with respect to divisibility. There are no
overlapping standard pairs.
Consider again the ideal $\langle x^2y^2, x^3y \rangle$ in
  $\field[x,xy,xy^2]$ from Example \ref{ex:ideals} (\ref{item:2dnormal}).
This ideal also has four standard pairs:
$((0,0),G)$, $((1,1),G)$, $((0,0),F)$, and
$((2,1),O)$ depicted in Figure
\ref{fig:2dnormal_std}. Here $F=\{(1,0)\}$, $G = \{(1,2)\}$, and $O =
\varnothing$ correspond to the proper faces of the cone $\RR_{\geq 0}A$.
The standard pair $((0,0),G)$ divides
$((1,1),G)$, and we again have three maximal standard pairs
with respect to divisibility. There are no overlapping standard pairs.
\item
  \label{ex:3dstd}
This example illustrates that overlapping standard pairs can occur
even if the semigroup ring is normal.
Consider  $\langle x^2 z^2, x^2yz^2, x^2yz^2 \rangle$ in $\field[z,xz,yz,xyz]$.
Let $F = \{(0,0,1),(0,1,1)\}$, which gives the face of $\RR_{\geq 0}A$ whose linear span is the $yz$-plane.
In this case, we have three standard pairs $((0,0,0),F)$ (a blue region
in Figure \ref{fig:3dnormal_std}), $((1,0,1),F)$ (a yellow region in
Figure \ref{fig:3dnormal_std}), and $((1,1,1),F)$ (a red region in
Figure \ref{fig:3dnormal_std}). The standard pairs $((1,0,1),F)$ and
$((1,1,1),F)$ overlap. In this case, there are two overlap classes of
standard pairs. As the pair $((0,0,0),F)$ divides (the overlap class
of) $((1,0,1),F)$, we have only one overlap class which is
maximal with respect to divisibility.
\item
Recall the ideal $\langle x,xyz,xyz^{2} \rangle$ in
$\field[x,xy,xz,xyz,y^2,z^2]$ from Example \ref{ex:ideals} (\ref{item:3dnonnormal}). 
Note again that this semigroup ring is not normal.
Let $F=\{(0,0,2),(0,2,0)\}$ be the face of $\RR_{\geq 0}
A$ whose linear span is the $yz$-plane, and let $G =\{ (0,2,0)\}$ be the
face whose linear span is the $y$-axis. In this case, our monomial
ideal has three standard pairs: $((0,0,0),F)$ (yellow points in Figure
\ref{fig:3dnonnormal_std}), $((1,0,1),F)$ (red points in Figure
\ref{fig:3dnonnormal_std}), and $((1,1,0),G)$ (blue points in Figure
\ref{fig:3dnonnormal_std}). Since $((1,1,0),G)$ cannot divide
$((1,0,1),F)$, it follows that there are two standard pairs that are
maximal with respect to divisibility. A feature of this example is
that the Zariski closure of the set $(1,0,1)+\NN F$ contains
$(1,1,0)+\NN G$, a situation that does not occur for standard pairs of
monomial ideals in polynomial rings.
\end{enumerate}
\end{example}

\begin{figure*}[t!]
    \centering
    \begin{subfigure}[t]{\twofigure}
        \centering
        \begin{tikzpicture}[scale=0.7]
	\fill[idealregioncolor] (1,6) -- (1,2) -- (3,2) -- (3,1) -- (6,1) -- (6,6) -- cycle ;
	  \draw[step=1cm,gray,very thin] (-1,-1) grid (6,6);
	  \draw [<->] (0,7) node (yaxis) [above] {$y$}
       	 |- (7,0) node (xaxis) [right] {$x$};
	\draw (-1.5,0) -- (0,0);
	\draw (0,-1.5) -- (0,0);

	\foreach \x in {0,...,6}
	\foreach \y in {0,...,6}
	\draw[black,fill=stdmcolor] (\x,\y) circle (3 pt);

	\foreach \x in {3,...,6}
	\foreach \y in {1,...,6}
	\draw[black,fill=idealcolor] (\x,\y) circle (3 pt);

	\foreach \x in {1,...,6}
	\foreach \y in {2,...,6}
	\draw[black,fill=idealcolor] (\x,\y) circle (3 pt);
	
	\draw[red,ultra thick,->] (0,0) -- (0,7);
	\draw[violet,ultra thick,->] (0,0) -- (7,0);
	\draw[green!70!black,ultra thick] (2,1) circle (10 pt);
	\draw[green!70!black,ultra thick] (1,1) circle (10 pt);
	\end{tikzpicture}
	 \caption{Standard pairs of $I= \langle x^3y, xy^2 \rangle$ in $\field[x,y]$}
	 \label{fig:2dpolyring_std}
    \end{subfigure}
    ~ 
    \begin{subfigure}[t]{\twofigure}
        \centering
        \begin{tikzpicture}[scale=0.7]
	\fill[idealregioncolor] (12,2) -- (16,2) -- (16,6) -- (14,6) -- cycle ;
	\fill[idealregioncolor] (13,1) -- (16,1) -- (16,6) -- (15.5,6) -- cycle ;

	  \draw[step=1cm,gray,very thin] (9,-1) grid (16,6);
	  \draw [<->] (10,7) node (yaxis) [above] {$t$}
       	 |- (17,0) node (xaxis) [right] {$s$};
	\draw (8.5,0) -- (10,0);
	\draw (10,-1.5) -- (10,0);
	\draw[black,<->](13.5,7) -- (10,0) -- (16,0)  ;

	\foreach \x in {10,...,16}
	\draw[black,fill=stdmcolor] (\x,0) circle (3 pt);
	\foreach \x in {10,...,13}
	{
		\draw[black,fill=stdmcolor] (\x,2*\x-20) circle (3 pt);
	};
	\foreach \x in {10,...,12}
	{
		\draw[black,fill=stdmcolor] (\x+1,1) circle (3 pt);
		\draw[black,fill=stdmcolor] (\x+1,2*\x-20+1) circle (3 pt);
	};
	\foreach \x in {10,...,14}
	\draw[black,fill=idealcolor] (2+\x,2) circle (3 pt);
	\foreach \x in {10,...,13}
	\draw[black,fill=idealcolor] (3+\x,1) circle (3 pt);
	\foreach \x in {13,...,16}
	\foreach \y in {3,4}
	\draw[black,fill=idealcolor] (\x,\y) circle (3 pt);
	\foreach \x in {14,...,16}
	\foreach \y in {5,6}
	\draw[black,fill=idealcolor] (\x,\y) circle (3 pt);
	
	\draw[red,ultra thick,->] (10,0) -- (13.5,7);
	\draw[red,ultra thick,->] (11,1) -- (14,7);
	\draw[violet,ultra thick,->] (10,0) -- (17,0);
	\draw[green!70!black,ultra thick] (12,1) circle (10 pt);
	\end{tikzpicture}
        \caption{Standard pairs of $\langle x^2y^2, x^3y \rangle$ in $\field[x,xy,xy^2]$}
        \label{fig:2dnormal_std}
    \end{subfigure}
    \caption{Standard pairs in two-dimensional affine semigroup rings}
    \label{fig:2dafffinesemigp_std}
\end{figure*}
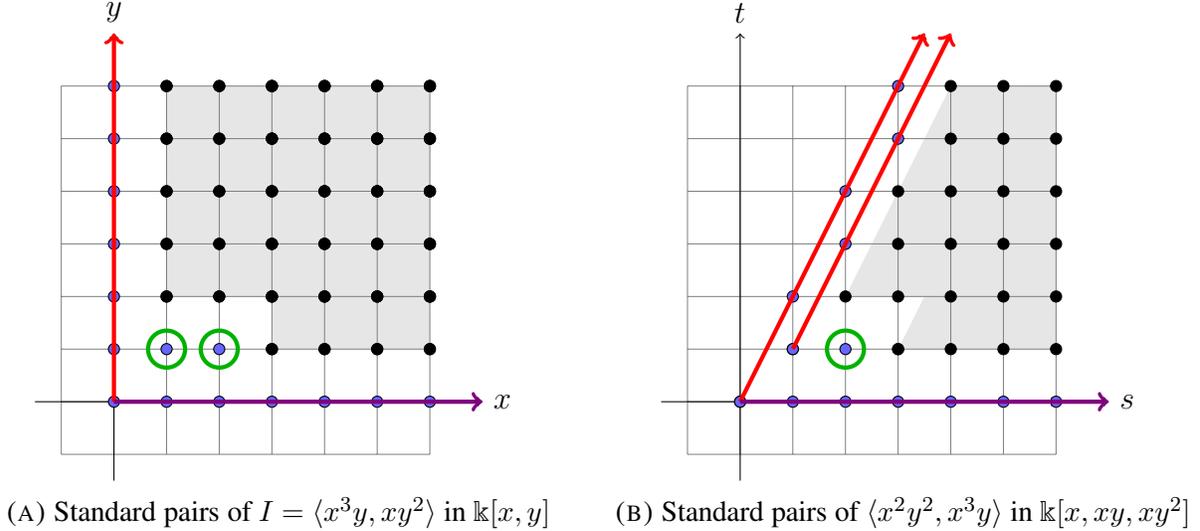

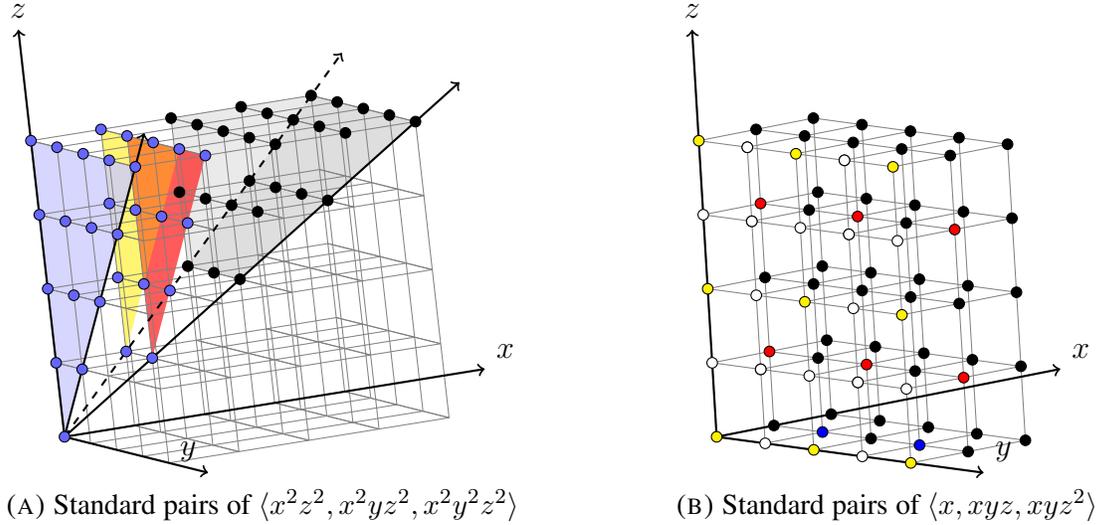
\begin{figure*}[t!]
    \centering
    \begin{subfigure}[t]{\twofigure}
        \centering
	 \tdplotsetmaincoords{90}{90} 
        \begin{tikzpicture}[tdplot_main_coords, scale=1]
	\tdplotsetrotatedcoords{40}{-10}{30}
	\fill[black!20, fill opacity=0.4,tdplot_rotated_coords] (2,0,2)--  (2,2,2) --  (4,4,4) -- (4,0,4) -- cycle;
	\fill[black!20, fill opacity=0.4,tdplot_rotated_coords] (2,4,4) -- (4,4,4) -- (4,0,4) -- (2,0,4)--  cycle;
	\fill[black!20, fill opacity=0.4,tdplot_rotated_coords] (2,4,4) -- (2,0,4)--  (2,0,2) -- (2,2,2) -- cycle;
	\fill[black!20, fill opacity=0.4,tdplot_rotated_coords] (4,4,4) -- (2,4,4)--  (2,2,2) -- cycle;

	\fill[yellow!80, fill opacity=0.8,tdplot_rotated_coords] (1,0,1) -- (1,3,4) -- (1,0,4) -- cycle;
	\fill[red!80, fill opacity=0.8,tdplot_rotated_coords] (1,1,1) -- (1,4,4) -- (1,1,4) -- cycle;
	\fill[orange!80, fill opacity=0.8,tdplot_rotated_coords] (1,1,2) -- (1,1,4) -- (1,3,4) -- cycle;
	\fill[blue!20,fill opacity=0.8,tdplot_rotated_coords] (0,0,0) -- (0,4,4) -- (0,0,4) -- cycle;

	\foreach \x in {0,...,4}
	{
		\draw[step=1cm,gray,very thin, tdplot_rotated_coords] (\x,0,0) -- (\x,4,0);
		\draw[step=1cm,gray,very thin, tdplot_rotated_coords] (0,\x,0) -- (4,\x,0);
		\draw[step=1cm,gray,very thin, tdplot_rotated_coords] (\x,0,1) -- (\x,4,1);
		\draw[step=1cm,gray,very thin, tdplot_rotated_coords] (0,\x,1) -- (4,\x,1);
		\draw[step=1cm,gray,very thin, tdplot_rotated_coords] (\x,0,2) -- (\x,4,2);
		\draw[step=1cm,gray,very thin, tdplot_rotated_coords] (0,\x,2) -- (4,\x,2);
		\draw[step=1cm,gray,very thin, tdplot_rotated_coords] (\x,0,3) -- (\x,4,3);
		\draw[step=1cm,gray,very thin, tdplot_rotated_coords] (0,\x,3) -- (4,\x,3);
		\draw[step=1cm,gray,very thin, tdplot_rotated_coords] (\x,0,4) -- (\x,4,4);
		\draw[step=1cm,gray,very thin, tdplot_rotated_coords] (0,\x,4) -- (4,\x,4);

		\draw[step=1cm,gray,very thin, tdplot_rotated_coords] (0,\x,0) -- (0,\x,4);
		\draw[step=1cm,gray,very thin, tdplot_rotated_coords] (0,0,\x) -- (0,4,\x);
		\draw[step=1cm,gray,very thin, tdplot_rotated_coords] (1,\x,0) -- (1,\x,4);
		\draw[step=1cm,gray,very thin, tdplot_rotated_coords] (1,0,\x) -- (1,4,\x);
		\draw[step=1cm,gray,very thin, tdplot_rotated_coords] (2,\x,0) -- (2,\x,4);
		\draw[step=1cm,gray,very thin, tdplot_rotated_coords] (2,0,\x) -- (2,4,\x);
		\draw[step=1cm,gray,very thin, tdplot_rotated_coords] (3,\x,0) -- (3,\x,4);
		\draw[step=1cm,gray,very thin, tdplot_rotated_coords] (3,0,\x) -- (3,4,\x);
		\draw[step=1cm,gray,very thin, tdplot_rotated_coords] (4,\x,0) -- (4,\x,4);
		\draw[step=1cm,gray,very thin, tdplot_rotated_coords] (4,0,\x) -- (4,4,\x);
	};
	
	\draw[thick,->,tdplot_rotated_coords] (0,0,0) -- (6,0,0) node[anchor=south west]{$x$}; 
	\draw[thick,->,tdplot_rotated_coords] (0,0,0) -- (0,5.5,0) node[anchor=south east]{$y$}; 
	\draw[thick,->,tdplot_rotated_coords] (0,0,0) -- (0,0,5.5) node[anchor=south]{$z$};
	\draw[thick,->,tdplot_rotated_coords] (0,0,0) -- (0,4.5,4.5);
	\draw[thick,dashed,->,tdplot_rotated_coords] (0,0,0) -- (4.5,0,4.5);
	\draw[thick, ->,tdplot_rotated_coords] (0,0,0) -- (4.5,4.5,4.5);

	\foreach \y in {0,...,4}
	\draw[black,fill=stdmcolor,tdplot_rotated_coords] (0,0,\y) circle (2 pt);
	\draw[black,fill=stdmcolor,tdplot_rotated_coords] (0,1,1) circle (2 pt);
	\draw[black,fill=stdmcolor,tdplot_rotated_coords] (0,1,2) circle (2 pt);
	\draw[black,fill=stdmcolor,tdplot_rotated_coords] (0,2,2) circle (2 pt);
	\draw[black,fill=stdmcolor,tdplot_rotated_coords] (0,1,3) circle (2 pt);
	\draw[black,fill=stdmcolor,tdplot_rotated_coords] (0,2,3) circle (2 pt);
	\draw[black,fill=stdmcolor,tdplot_rotated_coords] (0,3,3) circle (2 pt);
	\draw[black,fill=stdmcolor,tdplot_rotated_coords] (0,1,4) circle (2 pt);
	\draw[black,fill=stdmcolor,tdplot_rotated_coords] (0,2,4) circle (2 pt);
	\draw[black,fill=stdmcolor,tdplot_rotated_coords] (0,3,4) circle (2 pt);
	\draw[black,fill=stdmcolor,tdplot_rotated_coords] (0,4,4) circle (2 pt);

	\foreach \y in {0,...,3}
	\draw[black,fill=stdmcolor,tdplot_rotated_coords] (1,0,1+\y) circle (2 pt);

	\foreach \y in {0,...,3}
	\draw[black,fill=stdmcolor,tdplot_rotated_coords] (1,1+\y,1+\y) circle (2 pt);

	\draw[black,fill=stdmcolor,tdplot_rotated_coords] (1,1,2) circle (2 pt);
	\draw[black,fill=stdmcolor,tdplot_rotated_coords] (1,1,3) circle (2 pt);
	\draw[black,fill=stdmcolor,tdplot_rotated_coords] (1,2,3) circle (2 pt);
	\draw[black,fill=stdmcolor,tdplot_rotated_coords] (1,1,4) circle (2 pt);
	\draw[black,fill=stdmcolor,tdplot_rotated_coords] (1,2,4) circle (2 pt);
	\draw[black,fill=stdmcolor,tdplot_rotated_coords] (1,3,4) circle (2 pt);

	\draw[black,fill=idealcolor,tdplot_rotated_coords] (2,0,2) circle (2 pt);
	\draw[black,fill=idealcolor,tdplot_rotated_coords] (2,1,2) circle (2 pt);
	\draw[black,fill=idealcolor,tdplot_rotated_coords] (2,2,2) circle (2 pt);
	\draw[black,fill=idealcolor,tdplot_rotated_coords] (2,0,3) circle (2 pt);
	\draw[black,fill=idealcolor,tdplot_rotated_coords] (2,1,3) circle (2 pt);
	\draw[black,fill=idealcolor,tdplot_rotated_coords] (2,2,3) circle (2 pt);
	\draw[black,fill=idealcolor,tdplot_rotated_coords] (2,3,3) circle (2 pt);
	\draw[black,fill=idealcolor,tdplot_rotated_coords] (3,0,3) circle (2 pt);
	\draw[black,fill=idealcolor,tdplot_rotated_coords] (3,1,3) circle (2 pt);
	\draw[black,fill=idealcolor,tdplot_rotated_coords] (3,2,3) circle (2 pt);
	\draw[black,fill=idealcolor,tdplot_rotated_coords] (3,3,3) circle (2 pt);
	\draw[black,fill=idealcolor,tdplot_rotated_coords] (2,0,4) circle (2 pt);
	\draw[black,fill=idealcolor,tdplot_rotated_coords] (2,1,4) circle (2 pt);
	\draw[black,fill=idealcolor,tdplot_rotated_coords] (2,2,4) circle (2 pt);
	\draw[black,fill=idealcolor,tdplot_rotated_coords] (2,3,4) circle (2 pt);
	\draw[black,fill=idealcolor,tdplot_rotated_coords] (2,4,4) circle (2 pt);
	\draw[black,fill=idealcolor,tdplot_rotated_coords] (3,0,4) circle (2 pt);
	\draw[black,fill=idealcolor,tdplot_rotated_coords] (3,1,4) circle (2 pt);
	\draw[black,fill=idealcolor,tdplot_rotated_coords] (3,2,4) circle (2 pt);
	\draw[black,fill=idealcolor,tdplot_rotated_coords] (3,3,4) circle (2 pt);
	\draw[black,fill=idealcolor,tdplot_rotated_coords] (3,4,4) circle (2 pt);
	\draw[black,fill=idealcolor,tdplot_rotated_coords] (4,0,4) circle (2 pt);
	\draw[black,fill=idealcolor,tdplot_rotated_coords] (4,1,4) circle (2 pt);
	\draw[black,fill=idealcolor,tdplot_rotated_coords] (4,2,4) circle (2 pt);
	\draw[black,fill=idealcolor,tdplot_rotated_coords] (4,3,4) circle (2 pt);
	\draw[black,fill=idealcolor,tdplot_rotated_coords] (4,4,4) circle (2 pt);

	\end{tikzpicture}
	 \caption{Standard pairs of $\langle x^2 z^2, x^2yz^2, x^2y^2z^2 \rangle$}
	 \label{fig:3dnormal_std}
    \end{subfigure}
    ~ 
    \begin{subfigure}[t]{\twofigure}
        \centering
       \tdplotsetmaincoords{90}{90} 
	\begin{tikzpicture}[tdplot_main_coords, scale=1]
	\tdplotsetrotatedcoords{20}{-10}{30}
	\foreach \x in {0,...,4}
	{
	\draw[step=1cm,gray,very thin, tdplot_rotated_coords] (0,\x,0) -- (0,\x,4);
	\draw[step=1cm,gray,very thin, tdplot_rotated_coords] (0,0,\x) -- (0,4,\x);
	\draw[step=1cm,gray,very thin, tdplot_rotated_coords] (1,\x,0) -- (1,\x,4);
	\draw[step=1cm,gray,very thin, tdplot_rotated_coords] (1,0,\x) -- (1,4,\x);
	\draw[step=1cm,gray,very thin, tdplot_rotated_coords] (2,\x,0) -- (2,\x,4);
	\draw[step=1cm,gray,very thin, tdplot_rotated_coords] (2,0,\x) -- (2,4,\x);
	};
	\foreach \y in {0,...,4}
	\foreach \z in {0,...,4}
	\draw[step=1cm,gray,very thin, tdplot_rotated_coords] (0,\y,\z) -- (2,\y,\z);

	\draw[thick,->,tdplot_rotated_coords] (0,0,0) -- (6,0,0) node[anchor=south west]{$x$}; 
	\draw[thick,->,tdplot_rotated_coords] (0,0,0) -- (0,5.5,0) node[anchor=south west]{$y$}; 
	\draw[thick,->,tdplot_rotated_coords] (0,0,0) -- (0,0,5.5) node[anchor=south]{$z$};
	\foreach \y in {0,...,2}
		\foreach \z in {0,...,2}
		{
			\draw[black,fill=black,tdplot_rotated_coords] (1,2*\y,2*\z) circle (2 pt);
		};
	\foreach \y in {0,...,1}
		\foreach \z in {0,...,1}
			\draw[black,fill=black,tdplot_rotated_coords] (1,1+2*\y,1+2*\z) circle (2 pt);

	\foreach \y in {0,...,1}
		\foreach \z in {0,...,1}
			\draw[black,fill=black,tdplot_rotated_coords] (1,1+2*\y,2+2*\z) circle (2 pt);
	\foreach \y in {0,...,4}
		\foreach \z in {0,...,4}
			\draw[black,fill=black,tdplot_rotated_coords] (2,\y,\z) circle (2 pt);

	\foreach \y in {0,...,1}
	\foreach \z in {0,...,1}
	{
		\draw[black,fill=white,tdplot_rotated_coords] (0,2*\y+1,2*\z) circle (2 pt);
		\draw[black,fill=white,tdplot_rotated_coords] (0,2*\y+1,1) circle (2 pt);
		\draw[black,fill=white,tdplot_rotated_coords] (0,2*\y,2*\z+1) circle (2 pt);
		\draw[black,fill=white,tdplot_rotated_coords] (0,2*\y+1,3) circle (2 pt);
		\draw[black,fill=white,tdplot_rotated_coords] (0,2*\y+1,4) circle (2 pt);
	};
	\draw[black,fill=white,tdplot_rotated_coords] (0,4,1) circle (2 pt);
	\draw[black,fill=white,tdplot_rotated_coords] (0,4,3) circle (2 pt);

	\foreach \z in {0,...,2}
	\foreach \y in {0,...,2}
	{
		\draw[black,fill=yellow,tdplot_rotated_coords] (0,2*\y,2*\z) circle (2 pt);
	};
	\foreach \z in {0,...,1}
	\foreach \y in {0,...,2}
	{
		\draw[black,fill=red,tdplot_rotated_coords] (1,2*\y,1+2*\z) circle (2 pt);
	};
	\foreach \y in {0,...,1}
	{
		\draw[black,fill=blue,tdplot_rotated_coords] (1,1+2*\y,0) circle (2 pt);
	};
	\end{tikzpicture}
        \caption{Standard pairs of $\langle x,xyz,xyz^{2} \rangle$}
        \label{fig:3dnonnormal_std}
    \end{subfigure}
    \caption{Standard pairs in three-dimensional affine semigroup rings}
    \label{fig:3dafffinesemigp_std}
\end{figure*}

\subsection{Primary Decomposition}

Our goal now is to use standard pairs to give a primary
decomposition of a monomial ideal $I$ in $\field[\NN A]$ with monomial
primary components. This is achieved in
Theorem~\ref{thm:primaryDecomposition}, whose proof we break into
several steps.

First, we give a sufficient condition for a monomial ideal to be primary.

\begin{proposition}
\label{prop:primary}
Let $I$ be a monomial ideal in $\field[\NN A]$. If all the standard
pairs of $I$ belong to the same face $F$ of $A$, then $I$ is $\pf$-primary.
\end{proposition}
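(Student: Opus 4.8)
The plan is to establish the two assertions that together amount to $I$ being $\pf$-primary: that $\sqrt I=\pf$, and that $I$ has no embedded prime. I may assume $I\neq\field[\NN A]$ (otherwise $\stdPairs(I)=\varnothing$ and there is nothing of substance to prove), so that $0\in\stdMono(I)$.

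The first step is to unwind the hypothesis combinatorially. Every proper pair of $I$ lies $\prec$-below some standard pair --- the ascending $\prec$-chains of proper pairs are finite, by strong convexity together with the Dickson-type Lemma~\ref{lemma:Dickson} --- so $\stdMono(I)=\bigcup_{(a,G)\in\stdPairs(I)}(a+\NN G)$, and by hypothesis every $G$ occurring here is $F$. Thus $\stdMono(I)=\bigcup_i(a_i+\NN F)$ for the finitely many standard pairs $(a_i,F)$ (Theorem~\ref{thm:finitelyManyStdP}). Since $0\in\stdMono(I)$ lies in some $a_i+\NN F$, while $a_i\in\NN A$ and $-a_i\in\NN F\subseteq\NN A$, strong convexity forces $a_i=0$; hence $\NN F\subseteq\stdMono(I)$. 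Moreover $(a_i+\NN F)+\NN F=a_i+\NN F$, which yields the crucial closure property $\stdMono(I)+\NN F=\stdMono(I)$. Now $\NN F\subseteq\stdMono(I)$ shows that no monomial generator of $I$ can have exponent in $\NN F$, so $I\subseteq\pf$ and hence $\sqrt I\subseteq\pf$. For the reverse inclusion, take $b\in\NN A\minus\NN F$: if no power $t^{kb}$ lay in $I$, then all multiples $kb$ would lie in the finitely many cosets $a_i+\NN F$, so two of them $k_1b,k_2b$ would share a coset, giving $(k_2-k_1)b\in\ZZ F$ and hence $b\in\RR F\cap\RR_{\geq0}A=\RR_{\geq0}F$ (a standard property of faces of cones), whence $b\in\NN A\cap\RR_{\geq0}F=\NN F$, a contradiction. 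So $\pf\subseteq\sqrt I$, and $\sqrt I=\pf$.

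It remains to show $I$ is primary: if $fg\in I$ and $g\notin\pf$, then $f\in I$. Write $f=f_0+f_1$ with $f_0$ the sum of the terms of $f$ whose exponents lie in $\stdMono(I)$; then $f_1\in I$, so $f_0g=fg-f_1g\in I$, and it suffices to force $f_0=0$. The idea is to choose a term order adapted to $F$ and compare leading monomials. Let $\ell_F=\sum_{H\supseteq F}\varphi_H$, the sum of the primitive integral support functions (Definition~\ref{def:integerDistance}) over the facets $H$ of $A$ containing $F$; from the defining properties of the $\varphi_H$ one checks that $\ell_F\geq0$ on $\NN A$ and vanishes there exactly on $\NN F$. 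Take a translation-invariant total order $\prec$ on $\ZZ^d$ that compares $-\ell_F$ first and breaks ties by a fixed auxiliary such order; for such an order the $\prec$-leading monomial is multiplicative, since the $\prec$-largest element of a sumset of supports is attained uniquely. The $\prec$-largest exponent $v$ of $g$ then satisfies $\ell_F(v)=0$ --- because $g\notin\pf$ means the support of $g$ meets $\NN F$, where $\ell_F$ attains its minimum $0$ on $\NN A$ --- so $v\in\NN F$. Assuming $f_0\neq0$, let $u$ be the $\prec$-largest exponent of $f_0$; by construction $u\in\stdMono(I)$, so the leading monomial $t^{u+v}$ of $f_0g$ has $u+v\in\stdMono(I)+\NN F=\stdMono(I)$, i.e. $t^{u+v}\notin I$. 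But $t^{u+v}$ occurs with nonzero coefficient in $f_0g\in I$, and $I$ is a $\ZZ^d$-graded monomial ideal, so $t^{u+v}\in I$ --- a contradiction. Hence $f_0=0$, so $f\in I$ and $I$ is $\pf$-primary.

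I expect Step 3 to be the main obstacle. Over a polynomial ring one simply says that a monomial of $fg$ is a product of a monomial of $f$ and a monomial of $g$, but $\field[\NN A]$ is not a unique factorization domain, so a product of homogeneous elements can cancel and this naive reduction fails; the remedy is the $F$-adapted term order, engineered precisely so that the leading monomial of $f_0g$ lands in $\stdMono(I)$. (An alternative to Step 3 is to invoke that associated primes of a $\ZZ^d$-graded module are $\ZZ^d$-graded, hence monomial primes $\mathfrak p_G$, and then check combinatorially, using the first-step description of $\stdMono(I)$ and the equality $\sqrt I=\pf$, that necessarily $G=F$.)
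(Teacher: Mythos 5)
Your route is genuinely different from the paper's: you compute $\sqrt I=\pf$ directly and then verify the primary condition via an $F$-adapted term order, whereas the paper shows that for every standard monomial $t^b$ the colon ideal $(I:t^b)$ is either equal to $\pf$ (when $b$ lies in a coset of a divisibility-maximal overlap class) or fails to be prime, so that $\pf$ is the unique associated prime. Your Step 3 is correct and is a clean workaround for the failure of unique factorization: for a translation-invariant total order refining $-\ell_F$, the leading exponent of $f_0g$ is $u+v$ with $v\in\NN F$, it survives cancellation, and homogeneity of $I$ forces $f_0=0$. Step 1 is also fine.

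The gap is in Step 2, in the inclusion $\pf\subseteq\sqrt I$. You pigeonhole the points $kb$ into ``the finitely many cosets $a_i+\NN F$,'' citing Theorem~\ref{thm:finitelyManyStdP}. Inside this paper that citation is circular: Theorem~\ref{thm:finitelyManyStdP} is deduced from Corollary~\ref{coro:finitelyManyOverlapClasses}, which rests on Proposition~\ref{prop:multiplicity}, hence on Theorem~\ref{thm:primaryDecomposition}, hence on Proposition~\ref{prop:component}, whose proof invokes Proposition~\ref{prop:primary} itself. Nor does the finiteness of \emph{all} overlap classes on $F$ follow from Lemma~\ref{lemma:Dickson} alone: two non-overlapping standard pairs on $F$ can have exponents comparable under divisibility in $\NN A$ (e.g.\ $((0,0,0),F)$ and $((1,0,1),F)$ in Example~\ref{ex:ideals2}(3)), so Dickson gives finiteness only for the \emph{maximal} overlap classes (Lemma~\ref{lemma:finitelyManyMaximalClasses}), which is all that is available at this point. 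The repair is the argument from the paper's proof of Proposition~\ref{prop:component}: pick a facet $H\supseteq F$ with $\varphi_H(b)>0$; every standard monomial of $I$ divides an element of some maximal overlap class, $\varphi_H$ is constant on each such class, and there are finitely many of them, so $\varphi_H$ is bounded on $\stdMono(I)$ and $kb\notin\stdMono(I)$ for $k\gg 0$. With that substitution for the pigeonhole, your proof goes through.
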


\begin{proof}
This proof has two parts. We first show that $\pf$ is an associated
prime of $I$, and then show that no other prime is associated.

Since $I$ is $\ZZ^d$-homogeneous, so are all of its associated
primes, which means that the only possible associated primes are of
the form $\pg$ for some face $G$ of $\RR_{\geq 0} A$. Moreover, a
prime $\pg$ is associated to $I$ if and only if $(I:t^a) = \pg$ for
some monomial $t^a$, $a \in \NN A$.

The assumption on the standard pairs means that, for any $b
\in \NN A$ such that $t^b \notin I$, we have $b+\NN F \subset
\stdMono(I)$. In ideal-theoretic terms, this means that $(I:t^b)
\subseteq \pf$.

Let $(a,F)$ be standard pair of $I$ whose overlap class $[a,F]$ is maximal
with respect to divisibility. Recall that we denote $[a,F]$ the overlap class containing $(a,F)$. We claim that if $b \in a+\NN F$, then
$(I:t^b)=\pf$. To see this, let $c \in \NN A \minus \NN F$. If
$t^{b+c} \notin I$, then $b+c$ belongs to $a'+\NN F$ for some standard
pair $(a',F)$ of $I$ (all standard pairs of $I$ belong to $F$). As $c \notin \NN F$, this contradicts the
maximality of $[a,F]$. We conclude that if 
$c \in \NN A \minus \NN F$, then $t^{b+c}\in I$, so that $(I:t^b)
\supset \pf$. We already knew the reverse inclusion, therefore 
$(I:t^b) = \pf$, which shows that $\pf$ is associated to $I$.

To see that no other prime is associated we show that $(I:t^{b})$ is not prime ideal in all other cases. If the overlap class of
$(a,F)$ is not maximal with respect to divisibility and $b \in a + \NN
F$, then $(I:t^b)$ is not prime. Since $[a,F]$ is not maximal, there
is a standard pair $(a',F)$ of $I$, whose overlap class is maximal
with respect to divisibility, and such that $(a,F)$ divides $(a',F)$.
In particular, there is $c \in \NN A$ such that $b+c \in a'+\NN
F$. Indeed, $c \notin \NN F$ as $(a,F)$ and $(a'F)$ are not in the
same overlap class. Since $(a',F)$ is a standard pair of $I$, it follows that $t^c \notin (I:t^b)$. By the previous argument, however, since $c \in \NN A \minus \NN F$ and $b+c \in a'+\NN F$, we have $t^c \in (I:t^{b+c})$, which implies
$t^{2c} \in (I:t^b)$. We conclude that $(I:t^b)$ is not prime.
\end{proof}

The converse of Proposition~\ref{prop:primary} holds and is proved by exhibiting a
primary decomposition (Theorem~\ref{thm:primaryDecomposition}). Before
we can do that, we prove a finiteness result.

\begin{lemma}
  \label{lemma:finitelyManyMaximalClasses}
Let $I$ be a monomial ideal in $\field[\NN A]$, and let $F$ be a face
of $A$ such that $I$ has a standard pair belonging to $F$. There are
finitely many overlap classes of standard pairs of $I$ belonging to
$F$ that are maximal with respect to divisibility.
\end{lemma}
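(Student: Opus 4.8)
The plan is to reduce the statement to Dickson's Lemma (Lemma~\ref{lemma:Dickson}) by choosing a well-behaved set of representatives for the overlap classes in question. First I would record the two elementary facts about pairs belonging to a \emph{single} fixed face $F$ that the argument rests on. By Definition~\ref{def:Pairs}.\ref{item:overlap}, two pairs $(a,F)$ and $(b,F)$ overlap exactly when $a-b\in\ZZ F$; hence an overlap class of standard pairs of $I$ belonging to $F$ is determined by, and conversely determines, the coset in $\ZZ^d/\ZZ F$ of the first coordinate of any of its members. In particular, if $C$ and $C'$ are distinct overlap classes of standard pairs belonging to $F$, then the first coordinates of any representatives of $C$ and of $C'$ are distinct elements of $\NN A$. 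Second, if $a,a'\in\NN A$ with $a-a'\in\NN A$, then $a'+(a-a')+\NN F=a+\NN F$, so $(a',F)$ divides $(a,F)$ in the sense of Definition~\ref{def:Pairs}.\ref{item:div}; by Lemma~\ref{lemma:divPartialOrder} this passes to the divisibility (partial order) relation on overlap classes, yielding that $[a',F]$ divides $[a,F]$. Note that only this one implication is needed; its converse is false in general (it already fails when $\NN F=\NN A$ is not saturated), so I would take care not to use it.

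With these in hand the proof is quick. Let $\mathcal{C}$ be the set of overlap classes of standard pairs of $I$ that belong to $F$ and are maximal with respect to divisibility, and for each $C\in\mathcal{C}$ fix a standard pair $(a_C,F)$ with overlap class $C$. By the first observation, $C\mapsto a_C$ is injective, so $T:=\{a_C:C\in\mathcal{C}\}$ is a subset of $\NN A$ with $|T|=|\mathcal{C}|$. I claim no two distinct elements of $T$ are comparable with respect to divisibility in $\field[\NN A]$. Indeed, if $a_{C'}-a_C\in\NN A$ for some $C,C'\in\mathcal{C}$, then by the second observation $(a_C,F)$ divides $(a_{C'},F)$, hence $C$ divides $C'$; since $C$ is maximal and divisibility on overlap classes is a partial order, this forces $C=C'$ and thus $a_C=a_{C'}$. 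By symmetry the same conclusion holds if $a_C-a_{C'}\in\NN A$. Therefore $T$ is a subset of $\NN A$ no two of whose distinct elements are comparable, so Lemma~\ref{lemma:Dickson} shows $T$ is finite, and hence $\mathcal{C}$ is finite.

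I do not expect a genuine obstacle here: once pair-divisibility within a fixed face is linked to ordinary divisibility in $\field[\NN A]$, finiteness is just Noetherianity packaged as Dickson's Lemma. The points that require care are bookkeeping ones: keeping the distinction between the refinement order $\prec$ on pairs and the divisibility order on overlap classes, invoking Lemma~\ref{lemma:divPartialOrder} so that ``maximal with respect to divisibility'' refers to a genuine partial order, and resisting the temptation to assert the (false) converse of the divisibility criterion above. The hypothesis that $I$ has a standard pair belonging to $F$ plays essentially no role in the finiteness itself; if no standard pair belongs to $F$ the statement is vacuous.
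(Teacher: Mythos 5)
Your proof is correct and follows essentially the same route as the paper: representatives of distinct maximal overlap classes belonging to $F$ must have first coordinates that are incomparable with respect to divisibility in $\NN A$ (since $a_{C'}-a_C\in\NN A$ would make $C$ divide $C'$, contradicting maximality plus antisymmetry), and then Lemma~\ref{lemma:Dickson} gives finiteness. The paper states this in two sentences; your write-up just makes the same argument explicit, including the correct one-directional link between divisibility in $\NN A$ and divisibility of pairs.
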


\begin{proof}
  If $(a,F)$ and $(b,F)$ are standard pairs of $I$ that do not
  overlap, and whose overlap classes are maximal with respect to
  divisibility, then $a-b \notin \NN A$ and $b-a \notin \NN A$. Now
  apply Lemma~\ref{lemma:Dickson}.
\end{proof}

Our next step is to construct a $\pf$-primary ideal, which is later
shown to be a valid choice for a $\pf$-primary component of $I$.

\begin{proposition}
\label{prop:component}
Let $I$ be a monomial ideal in $\field[\NN A]$, and let $F$ be a face
of $A$ such that $I$ has a standard pair belonging to $F$.
Set  
\[
  S = \left\{ t^b \; \bigg| 
      \begin{array}{l}
        b \in \NN A \text{ divides some element of } a+ \NN F
        \text{ for some standard pair } \\
        (a,F) \text{ of } I \text{ whose
        overlap class is maximal with respect to divisibility}
      \end{array}
\right\} .
\]
Then $S$ is the set of standard 
monomials of a monomial ideal $C_F$, $C_F \supset I$, and $C_F$ is
$\pf$-primary. 
\end{proposition}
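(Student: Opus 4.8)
The plan is to verify the three assertions in turn, the primariness of $C_F$ being the only substantive one. Write $T_{(a,F)}=\{\,b\in\NN A\mid b\text{ divides some element of }a+\NN F\,\}$, so that $S=\bigcup T_{(a,F)}$, the union being over the standard pairs $(a,F)$ of $I$ whose overlap class is maximal with respect to divisibility. The basic observation is that each $T_{(a,F)}$, and hence $S$, is closed downward under divisibility in $\field[\NN A]$: if $b'$ divides $b$ and $b$ divides $a+f$, then $b'$ divides $a+f$. Consequently the complement $\NN A\minus S$ is closed under adding elements of $\NN A$, so the monomial ideal $C_F$ generated by $\{\,t^c\mid c\in\NN A\minus S\,\}$ satisfies $\stdMono(C_F)=S$ (by downward closure no monomial of $S$ lies in $C_F$); this is the first assertion. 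For $I\subseteq C_F$: every $b\in S$ divides some $a+f\in a+\NN F\subseteq\stdMono(I)$ by Definition~\ref{def:standardPairs}, and a divisor of a standard monomial is a standard monomial, so $S\subseteq\stdMono(I)$, that is, $I\subseteq C_F$.

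For primariness, by Proposition~\ref{prop:primary} it suffices to show that every standard pair of $C_F$ belongs to $F$; note $C_F$ is a proper ideal since $S\neq\varnothing$. Fix a standard pair $(p,G)$ of $C_F$, so that $p+\NN G\subseteq\stdMono(C_F)=S$ and in particular $p\in S$. I would first record two facts. \emph{(ii) $S+\NN F\subseteq S$:} if $b\in S$, say $b$ divides $a+f_0$ with $f_0\in\NN F$ and $(a,F)$ a standard pair of $I$ whose overlap class is maximal, then for every $f\in\NN F$ the element $b+f$ divides $a+(f_0+f)\in a+\NN F$, whence $b+f\in S$. \emph{(iii) for each facet $H\supseteq F$, $\varphi_H$ is bounded above on $S$:} $\varphi_H$ takes a single value on each overlap class of pairs belonging to $F$ (representatives differ by an element of $\ZZ F\subseteq\RR H$), and there are only finitely many maximal overlap classes by Lemma~\ref{lemma:finitelyManyMaximalClasses}, so $M_H:=\sup\{\,\varphi_H(a)\mid (a,F)\text{ a standard pair of }I\text{ with maximal overlap class}\,\}$ is finite; moreover, if $b\in S$ divides $a+f$ as above, then $\varphi_H(b)\leq\varphi_H(a+f)=\varphi_H(a)\leq M_H$ (using $\varphi_H(f)=0$ since $f\in\NN F\subseteq\RR H$).

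Now I claim $G\subseteq F$. If some generator $g$ of $\NN G$ were not in $F$, then, since $F$ is the intersection of $\RR_{\geq 0}A$ with the facets containing it, there would be a facet $H\supseteq F$ with $\varphi_H(g)>0$. But $p+ng\in S$ for all $n\geq 0$, so fact (iii) gives $\varphi_H(p)+n\varphi_H(g)=\varphi_H(p+ng)\leq M_H$ for all $n$, which is absurd. Hence $G\subseteq F$, so $(p,G)\preceq(p,F)$ by Definition~\ref{def:Pairs}.\ref{item:prec}, while $(p,F)$ is a proper pair of $C_F$ by fact (ii) (as $p\in S$). Maximality of $(p,G)$ now forces $(p,G)=(p,F)$, i.e.\ $G=F$, and Proposition~\ref{prop:primary} yields that $C_F$ is $\pf$-primary.

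I expect fact (iii), and the deduction $G\subseteq F$ from it, to be the main obstacle: this is the single point where the finiteness of maximal overlap classes (Lemma~\ref{lemma:finitelyManyMaximalClasses}) and the primitive integral support functions are genuinely needed, and where non-normality makes itself felt. The remaining pieces --- the downward-closure bookkeeping, fact (ii), and the standard dictionary between monomial ideals and sets of monomials closed downward under divisibility --- are routine.
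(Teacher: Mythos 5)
Your proposal is correct and follows essentially the same route as the paper's proof: downward closure under divisibility for the first assertion, the observation that $S\subseteq\stdMono(I)$ for the containment, and then a reduction to Proposition~\ref{prop:primary} by showing every standard pair of $C_F$ belongs to $F$, using that $(b,F)$ is proper for $b\in S$ together with the boundedness of the primitive integral support functions $\varphi_H$ (for facets $H\supseteq F$) on $S$, which rests on Lemma~\ref{lemma:finitelyManyMaximalClasses}. The only cosmetic difference is that you argue directly with a fixed standard pair $(p,G)$ of $C_F$ rather than first proving the paper's intermediate claim that $t^{b+kc}\notin S$ for large $k$ whenever $c\in\NN A\minus\NN F$.
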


\begin{proof}
The first assertion is equivalent to the following 
statement, whose proof is straightforward: if $t^b$ does not divide any monomial arising
from the maximal overlap classes, and $c \in \NN A$, then $t^{b+c}$ cannot
divide any such monomial either. The second assertion follows from the
fact that $I$ is an ideal: no monomial in $S$ can belong to $I$.

It remains to be shown that $C_F$ is $\pf$-primary.
By Proposition~\ref{prop:primary}, it is enough to
show that all standard pairs of $C_F$ belong to the face $F$. To
see this, we first observe that if $t^b \in S$, then $t^{b+c} \in S$
for all $c \in \NN F$. This implies that $(b,F)$ is a proper pair
for all $t^b \in S$.

To finish the proof, we check that $C_F$ has no proper pairs of the form
$(b,G)$, where $G$ strictly contains $F$. This is a consequence of the
following claim:
\begin{quote}
If $t^b \in S$ and $c \in \NN A \minus \NN F$, 
there is a positive integer $k$ such that $t^{b+k c} \notin
S$.
\end{quote}

To prove the claim, as $c \in \NN A \minus \NN F$, there is a facet $H$ of $A$ such that
$H$ contains $F$ and $\varphi_H(c)>0$ (see
Definition~\ref{def:integerDistance}). 

Since $H$ contains $F$, $\varphi_H$ is constant on each set $a +\NN F$.
Moreover, if $(a,F)$ and $(a',F)$ are overlapping standard pairs of
$I$, then the value of $\varphi_H$ on $a+\NN F$
equals the value of $\varphi_H$ on $a'+\NN F$. Now, by
Lemma~\ref{lemma:finitelyManyMaximalClasses} there are
finitely many maximal overlap classes of standard pairs. This implies
that there is a positive integer $N$ which is an upper bound for the values
that $\varphi_H$ attains on these classes.
It follows that for any monomial in $S$, the
value of $\varphi_H$ on its exponent is at most $N$.
In particular, $\varphi_H(b) \leq N$. Since $\varphi_H(c)>0$, we may
choose a sufficiently large $k$ so that $\varphi_H(b+kc) =
\varphi_H(b)+k\varphi_H(c)>N$. It follows that $b+kc \notin S$, as was claimed.
\end{proof}

\begin{theorem}
  \label{thm:primaryDecomposition}
  Let $I$ be a monomial ideal in $\field[\NN A]$. Let
\[
  \mathscr{S} =\{ F \text{ face of } A \mid I \text{ has a standard
    pair belonging to } F\}.
\]
  For $F \in \mathscr{S}$, let $C_F$ be
  as in Proposition~\ref{prop:component}. Then $I = \cap_{F \in
    \mathscr{S}} C_F$ is an irredundant primary decomposition of $I$.
  Consequently,
  \begin{enumerate}
    \item  $\pf$ is associated to $I$ if and only if $I$ has a
      standard pair that belongs to $F$.
    \item $I$ is $\pf$-primary if and only if all standard pairs of
      $I$ belong to $F$.
   \end{enumerate}
\end{theorem}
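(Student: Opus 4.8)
The plan is to verify that the proposed decomposition $I = \bigcap_{F \in \mathscr{S}} C_F$ holds, that it is irredundant, and then to read off the two consequences. I would break the argument into the following steps.

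\emph{Step 1: $I \subseteq \bigcap_{F} C_F$.} This is immediate from Proposition~\ref{prop:component}, which asserts $C_F \supseteq I$ for every $F \in \mathscr{S}$.

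\emph{Step 2: $\bigcap_F C_F \subseteq I$; equivalently, $\stdMono(I) \subseteq \bigcup_F \stdMono(C_F)$.} Let $b \in \stdMono(I)$, so $t^b \notin I$. Then $b$ lies in $a + \NN F$ for some standard pair $(a,F)$ of $I$ (by maximality of standard pairs with respect to $\prec$ and Dickson's Lemma~\ref{lemma:Dickson}, every element of $\stdMono(I)$ is covered by a standard pair). If the overlap class $[a,F]$ is maximal with respect to divisibility, then $b$ divides an element of $a+\NN F$ (namely itself), so $t^b \in S = \stdMono(C_F)$, and we are done. If $[a,F]$ is not maximal, then by Lemma~\ref{lemma:divPartialOrder} together with Lemma~\ref{lemma:finitelyManyMaximalClasses} there is a standard pair $(a',F)$ of $I$ whose overlap class is maximal with respect to divisibility and such that $(a,F)$ divides $(a',F)$; hence there is $c \in \NN A$ with $b + c \in a' + \NN F$, which shows $b$ divides an element of $a'+\NN F$, so again $t^b \in \stdMono(C_F)$. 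Thus $\stdMono(I) = \bigcup_{F \in \mathscr{S}} \stdMono(C_F)$, giving $I = \bigcap_F C_F$. (This uses the elementary fact that for monomial ideals, $\stdMono(J_1 \cap J_2) = \stdMono(J_1) \cup \stdMono(J_2)$, applied finitely often; $\mathscr{S}$ is finite by Theorem~\ref{thm:finitelyManyStdP}.)

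\emph{Step 3: the $C_F$ are $\pf$-primary with distinct associated primes.} By Proposition~\ref{prop:component}, $C_F$ is $\pf$-primary. The faces in $\mathscr{S}$ are distinct, and by Lemma~\ref{lemma:monomialPrimes} distinct faces give distinct monomial primes $\pf$, so the components have pairwise distinct radicals.

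\emph{Step 4: irredundancy.} Fix $F_0 \in \mathscr{S}$; I must show $\bigcap_{F \neq F_0} C_F \not\subseteq C_{F_0}$, i.e.\ that some $b \in \stdMono(C_{F_0})$ lies in $\stdMono(C_F)$ for no $F \neq F_0$. Take a standard pair $(a,F_0)$ of $I$ whose overlap class is maximal with respect to divisibility, and let $b$ be a point deep inside $a + \NN F_0$ — concretely, choose $b = a + \mu$ for $\mu \in \NN F_0$ with $\varphi_H(b)$ exceeding the bound $N$ from the proof of Proposition~\ref{prop:component} for every facet $H \supsetneq F_0$ that is relevant to some other face in $\mathscr{S}$. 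Then $t^b \in \stdMono(C_{F_0})$. If $t^b \in \stdMono(C_F)$ for some $F \neq F_0$, then $b$ divides an element of $a' + \NN F$ for a maximal-class standard pair $(a',F)$ of $I$; since $t^b$ is a standard monomial of $I$ lying in $a+\NN F_0$, pushing $b$ far enough into $F_0$ forces a contradiction with the finitely many maximal overlap classes and the support-function bounds, exactly as in the claim proved inside Proposition~\ref{prop:component}. This is the step I expect to be the main obstacle: one must argue carefully that ``being far inside one component's face, measured by every primitive integral support function of the facets containing $F_0$'' genuinely excludes membership in the standard monomials of every other $C_F$, and this requires combining the boundedness of $\varphi_H$ on maximal classes with the fact (highlighted in Example~\ref{ex:ideals2}) that Zariski closures of pair-regions can contain one another, so set-theoretic containment alone is not enough.

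\emph{Step 5: consequences.} Since $I = \bigcap_{F \in \mathscr{S}} C_F$ is an irredundant primary decomposition and $C_F$ is $\pf$-primary, the associated primes of $I$ are exactly $\{\pf : F \in \mathscr{S}\}$; by definition of $\mathscr{S}$ this is statement (1). For statement (2): if all standard pairs of $I$ belong to $F$, then $I$ is $\pf$-primary by Proposition~\ref{prop:primary}; conversely, if $I$ is $\pf$-primary, then $\mathscr{S} = \{F\}$ by (1), so every standard pair of $I$ belongs to $F$.
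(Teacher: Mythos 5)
Your Steps 1--3 and 5 are correct and follow the same route as the paper's proof: reduce everything to the set-theoretic identity $\stdMono(I)=\bigcup_{F\in\mathscr{S}}\stdMono(C_F)$ and then invoke Proposition~\ref{prop:component}. Your Step 2 is in fact more careful than the published argument, which disposes of the reverse inclusion with ``then $t^b\notin C_F$ by the construction of $C_F$''; you correctly supply the reduction from an arbitrary overlap class to a maximal one via divisibility, which is needed because $\stdMono(C_F)$ only records divisors of the \emph{maximal} classes.

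The genuine gap is in Step 4, exactly where you predicted. Your plan is to push $b=a+\mu$ deep into $a+\NN F_0$ so that $\varphi_H(b)$ exceeds the bounds that $\varphi_H$ attains on $\stdMono(C_F)$ for the other faces $F$. This works only for those $F$ with $F_0\not\subseteq F$: for such $F$ there is a facet $H\supseteq F$ with $F_0\not\subseteq H$, so $\varphi_H$ is unbounded on $a+\NN F_0$ while bounded on $\stdMono(C_F)$. But when $F\supsetneq F_0$ (which does occur; in Example~\ref{ex:ideals2} one has $O\subsetneq F$ and $O\subsetneq G$ with all three faces in $\mathscr{S}$), every facet containing $F$ also contains $F_0$, so every relevant $\varphi_H$ is \emph{constant} on $a+\NN F_0$ and no choice of $\mu$ helps. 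The right argument for these faces uses maximality with respect to $\prec$ rather than support functions. Suppose some $b=a+f_0$ with $f_0\in\NN F_0$ divides an element of $a'+\NN F$ for a standard pair $(a',F)$ with $F\supsetneq F_0$, say $b+c\in a'+\NN F$ with $c\in\NN A$. Then for every $f\in\NN F$ the monomial $t^{a+f}$ divides $t^{(b+c)+f}$, whose exponent lies in $a'+\NN F\subseteq\stdMono(I)$; since divisors of standard monomials are standard, $a+\NN F\subseteq\stdMono(I)$, so $(a,F)$ is a proper pair strictly above $(a,F_0)$ in $\prec$, contradicting that $(a,F_0)$ is a standard pair. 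Hence $(a+\NN F_0)\cap\stdMono(C_F)=\varnothing$ for every $F\supsetneq F_0$, and your support-function bound finishes the remaining faces. (For what it is worth, the paper's own proof does not spell out irredundancy at all, so on this point you are attempting more than the source; but as written your Step 4 does not go through without the observation above.)
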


\begin{proof}
By Proposition~\ref{prop:component} it is enough to show that $I = \cap_{F \in
    \mathscr{S}} C_F$. By construction, $I \subseteq C_F$ for all
  $F\in \mathscr{S}$, so that $I \subseteq \cap_{F \in
    \mathscr{S}} C_F$. To see the reverse inclusion, we claim that if
  $t^b \notin I$, then $t^b \notin \cap_{F \in \mathscr{S}} C_F$, or
  equivalently, $t^b \notin C_F$ for some $F\in \mathscr{S}$.  To see this, since $t^b \notin I$, there is a standard pair $(a,F)$ of $I$ such that $b \in a +\NN F$. But then $t^b \notin C_F$ by the construction of $C_F$.
  
To show the irredundancy, suppose that the decomposition is not irredundant. Then, there exists a face $F$ such that $C_{F} \supseteq \bigcap_{G \neq F}C_{G} \supseteq I$. Let $[a,F]$ be a maximal overlap class of $I$. Denote $\bigcup[a,F]$ be the union of all monomials in $a'+\NN F$ where $(a',F)$ is in the overlap class $[a,F]$. Then, for any monomial $b \in \bigcup[a,F]$, there is a face $G$ such that $b \not\in C_{G}.$ Hence, $b$ divides some element of $c+\NN G$ such that $(c,G)$ form a standard pair of an overlap class which is maximal with respect to divisibility. This implies two facts; first of all, $F$ is not a vertex. If $F$ is a vertex, then $(a',F)$ is inside of the standard pair, contradicting the maximality of the standard pair. Next, $(b,G)$ is a proper pair of $I$, hence it is contained in $\bigcup[b,G]$. From Lemma \ref{lemma:finitelyManyMaximalClasses}, $(\bigcup[a,F])$ is covered by by finitely many $(\bigcup[d,G'])$ for some faces $G'$ with monomials $d$.
  
Now we show that every overlap class covers finitely many monomials of $(\bigcup[a,F])$. This contradicts pigeonhole principle since $\bigcup[a,F]$ is infinite. Suppose that $[d,G']$ is such that $(\bigcup[a,F]) \cap (\bigcup[d,G'])$ is infinite. This implies $a+c \in d + \NN G'$ for some $c \in \NN F$. If $F \subseteq G'$, then $(a,G')$ is a proper pair containing $(a,F)$, a contradiction. Conversely, if $G' \subseteq F$, then there exists $e \in \NN F$ such that $d+e \in I$. Thus, $a+c + e = d+c' +e \in I$, a contradiction. Thus, there exist two hyperplanes $H_{F}$ and $H_{G}$ such that $\varphi_{H_{F}}(\NN F)$ is bounded while $\varphi_{H_{F}}(\NN G)$ is not, and vice versa. Since $(\bigcup[a,F]) \cap (\bigcup[d,G'])$ is infinite, let $\{ f_{n}\} \subseteq \NN F$ be an ordered sequence such that $\varphi_{H_{G}}(f_{i}) < \varphi_{H_{G}}(f_{i+1})$ for all $i$ and $a+f_{i} \in (\bigcup[a,F]) \cap (\bigcup[d,G'])$. Such a sequence exists, since $\{ \varphi_{H_{G}}(f) < n: f \in \NN F\}$ is always finite for any $n\in \NN$. Then, $\varphi_{H_{G}}(\{ a+f_{i}:i \in \NN\})$ is not bounded, contradicting the fact that $\varphi_{H_{G}}(\bigcup[d,G'])$ is bounded.
\end{proof}

\begin{example}[Continuation of Examples~\ref{ex:ideals}
  and~\ref{ex:ideals2}]
  \
  \label{ex:ideals3}
  \begin{enumerate}[leftmargin=*]
    \item In Example \ref{ex:ideals}(\ref{item:2dpoly}), $\langle x^3y, xy^2 \rangle \subset \field[x,y]$
      has three maximal overlap classes of standard pairs,
      $[(0,0),F]$, $[(0,0),G]$, and $[(2,1),O]$. In the notation of
      Proposition~\ref{prop:component} and
      Theorem~\ref{thm:primaryDecomposition}, $C_F =\langle xy,xy^2\rangle$,
      $C_G = \langle x \rangle$ and $C_O = \langle x^2,xy^2\rangle$,
      yielding the primary decomposition
      \[ \langle x^3y, xy^2 \rangle = \langle xy,xy^2\rangle \cap \langle x
        \rangle \cap \langle x^2,xy^2	\rangle. \]
      \item Figure~\ref{fig:2dafffinesemigp_std_primary} depicts the
        primary decomposition of $\langle x^2y^2,x^3y\rangle \subset
        \field[x,xy,xy^2]$. Ideals are indicated using shaded regions,
        standard pairs are illustrated using thick lines and circles.
        \item In Example \ref{ex:ideals}(\ref{item:3dnormal}), the ideal $\langle x^2 z^2, x^2yz^2,
          x^2yz^2 \rangle \subset \field[z,xz,yz,xyz]$ under consideration is
          $\pf$-primary.
          \item A primary decomposition of $\langle x,xyz,xyz^{2}
            \rangle \subset \field[x,xy,xz,xyz,y^2,z^2]$ is depicted
            in Figure~\ref{fig:3dafffinesemigp_std_primary}. Exponents
            of monomials in the ideal are colored black. Other colors
            are used to indicate monomials from the same standard pair.
  \end{enumerate} 
\end{example}

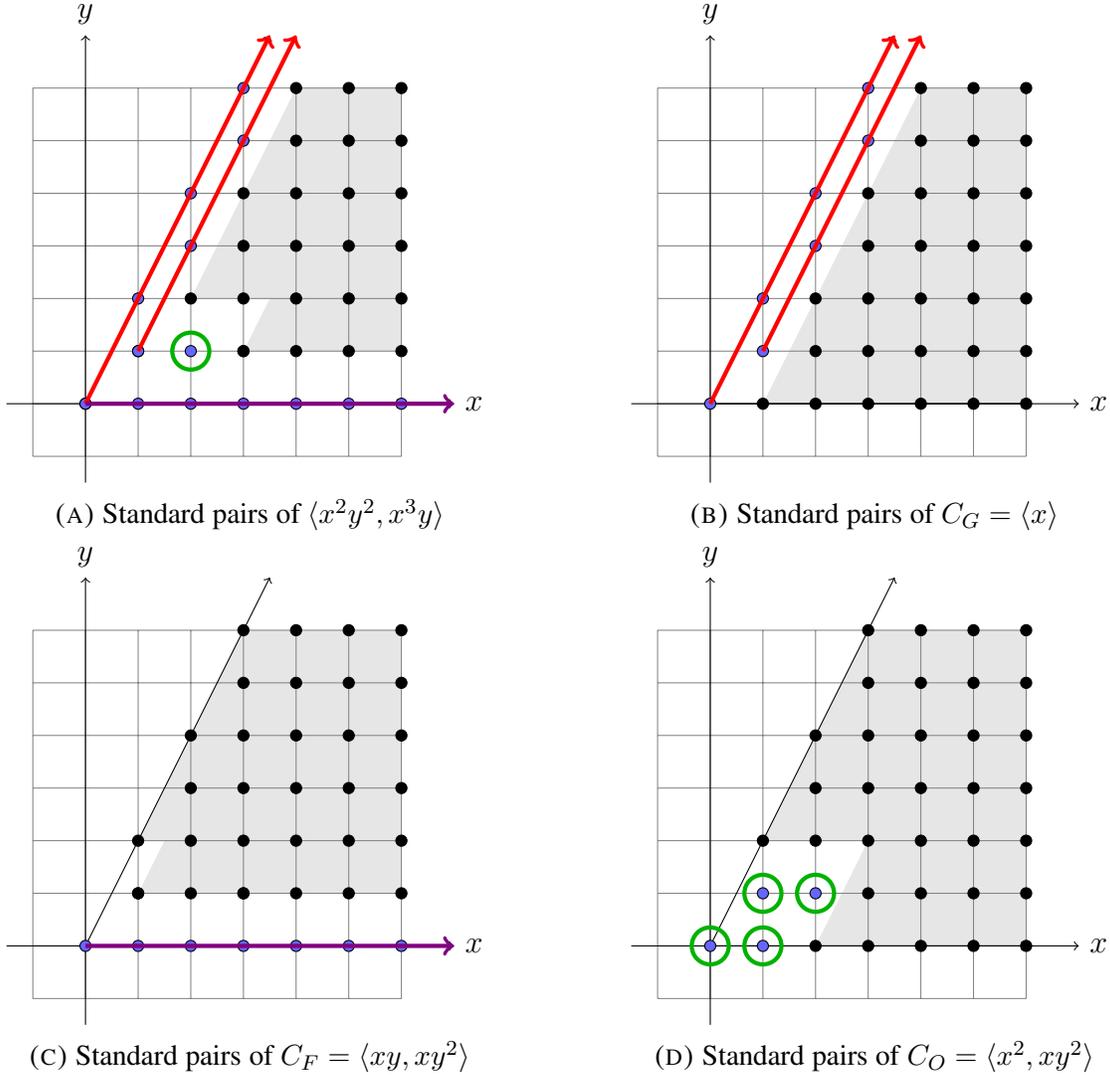
\begin{figure*}[t!]
    \centering 
    \begin{subfigure}[t]{\fourfigure}
        \centering
        \begin{tikzpicture}[scale=0.7]
	\fill[idealregioncolor] (12,2) -- (16,2) -- (16,6) -- (14,6) -- cycle ;
	\fill[idealregioncolor] (13,1) -- (16,1) -- (16,6) -- (15.5,6) -- cycle ;

	  \draw[step=1cm,gray,very thin] (9,-1) grid (16,6);
	  \draw [<->] (10,7) node (yaxis) [above] {$y$}
       	 |- (17,0) node (xaxis) [right] {$x$};
	\draw (8.5,0) -- (10,0);
	\draw (10,-1.5) -- (10,0);
	\draw[black,<->](13.5,7) -- (10,0) -- (16,0)  ;

	\foreach \x in {10,...,16}
	\draw[black,fill=stdmcolor] (\x,0) circle (3 pt);
	\foreach \x in {10,...,13}
	{
		\draw[black,fill=stdmcolor] (\x,2*\x-20) circle (3 pt);
	};
	\foreach \x in {10,...,12}
	{
		\draw[black,fill=stdmcolor] (\x+1,1) circle (3 pt);
		\draw[black,fill=stdmcolor] (\x+1,2*\x-20+1) circle (3 pt);
	};
	\foreach \x in {10,...,14}
	\draw[black,fill=idealcolor] (2+\x,2) circle (3 pt);
	\foreach \x in {10,...,13}
	\draw[black,fill=idealcolor] (3+\x,1) circle (3 pt);
	\foreach \x in {13,...,16}
	\foreach \y in {3,4}
	\draw[black,fill=idealcolor] (\x,\y) circle (3 pt);
	\foreach \x in {14,...,16}
	\foreach \y in {5,6}
	\draw[black,fill=idealcolor] (\x,\y) circle (3 pt);
	
	\draw[red,ultra thick,->] (10,0) -- (13.5,7);
	\draw[red,ultra thick,->] (11,1) -- (14,7);
	\draw[violet,ultra thick,->] (10,0) -- (17,0);
	\draw[green!70!black,ultra thick] (12,1) circle (10 pt);
	\end{tikzpicture}
        \caption{Standard pairs of $\langle x^2y^2, x^3y \rangle$}
        \label{fig:2dnormal_std_primary}
    \end{subfigure}
    ~
   \begin{subfigure}[t]{\fourfigure}
        \centering
        \begin{tikzpicture}[scale=0.7]
	\fill[idealregioncolor] (11,0) -- (16,0) -- (16,6) -- (14,6) -- cycle ;
	  \draw[step=1cm,gray,very thin] (9,-1) grid (16,6);
	  \draw [<->] (10,7) node (yaxis) [above] {$y$}
       	 |- (17,0) node (xaxis) [right] {$x$};
	\draw (8.5,0) -- (10,0);
	\draw (10,-1.5) -- (10,0);
	\draw[black,<->](13.5,7) -- (10,0) -- (16,0)  ;

	\foreach \x in {10,...,13}
	{
		\draw[black,fill=stdmcolor] (\x,2*\x-20) circle (3 pt);
	};
	\foreach \x in {10,...,12}
	{
		\draw[black,fill=stdmcolor] (\x+1,2*\x-20+1) circle (3 pt);
	};
	\foreach \x in {11,...,16}
	\draw[black,fill=idealcolor] (\x,0) circle (3 pt);
	\draw[black,fill=idealcolor] (12,1) circle (3 pt);
	\foreach \x in {10,...,14}
	\draw[black,fill=idealcolor] (2+\x,2) circle (3 pt);
	\foreach \x in {10,...,13}
	\draw[black,fill=idealcolor] (3+\x,1) circle (3 pt);
	\foreach \x in {13,...,16}
	\foreach \y in {3,4}
	\draw[black,fill=idealcolor] (\x,\y) circle (3 pt);
	\foreach \x in {14,...,16}
	\foreach \y in {5,6}
	\draw[black,fill=idealcolor] (\x,\y) circle (3 pt);
	
	\draw[red,ultra thick,->] (10,0) -- (13.5,7);
	\draw[red,ultra thick,->] (11,1) -- (14,7);
	\end{tikzpicture}
        \caption{Standard pairs of $C_{G}= \langle x \rangle$}
        \label{fig:2dnormal_std_primary12}
    \end{subfigure}
    
    \begin{subfigure}[t]{\fourfigure}
        \centering
        \begin{tikzpicture}[scale=0.7]
	\fill[idealregioncolor] (11,2) -- (16,2) -- (16,6) -- (13,6) -- cycle ;
	\fill[idealregioncolor] (11,1) -- (16,1) -- (16,6) -- (13.5,6) -- cycle ;

	  \draw[step=1cm,gray,very thin] (9,-1) grid (16,6);
	  \draw [<->] (10,7) node (yaxis) [above] {$y$}
       	 |- (17,0) node (xaxis) [right] {$x$};
	\draw (8.5,0) -- (10,0);
	\draw (10,-1.5) -- (10,0);
	\draw[black,<->](13.5,7) -- (10,0) -- (16,0)  ;

	\foreach \x in {10,...,16}
	\draw[black,fill=stdmcolor] (\x,0) circle (3 pt);
	\foreach \x in {10,...,12}
	{
		\draw[black,fill=idealcolor] (\x+1,2*\x-20+1) circle (3 pt);
	};
	\foreach \x in {11,...,13}
	{
		\draw[black,fill=idealcolor] (\x,1) circle (3 pt);
		\draw[black,fill=idealcolor] (\x,2*\x-20) circle (3 pt);
	};
	\foreach \x in {10,...,14}
	\draw[black,fill=idealcolor] (2+\x,2) circle (3 pt);
	\foreach \x in {10,...,13}
	\draw[black,fill=idealcolor] (3+\x,1) circle (3 pt);
	\foreach \x in {13,...,16}
	\foreach \y in {3,4}
	\draw[black,fill=idealcolor] (\x,\y) circle (3 pt);
	\foreach \x in {14,...,16}
	\foreach \y in {5,6}
	\draw[black,fill=idealcolor] (\x,\y) circle (3 pt);
	
	\draw[violet,ultra thick,->] (10,0) -- (17,0);
	\end{tikzpicture}
        \caption{Standard pairs of $C_{F}= \langle xy, xy^2 \rangle$}
        \label{fig:2dnormal_std_primary10}
    \end{subfigure}
    \begin{subfigure}[t]{\fourfigure}
        \centering
        \begin{tikzpicture}[scale=0.7]
	\fill[idealregioncolor] (11,2) -- (16,2) -- (16,6) -- (13,6) -- cycle ;
	\fill[idealregioncolor] (12,0) -- (16,0) -- (16,6) -- (15,6) -- cycle ;

	  \draw[step=1cm,gray,very thin] (9,-1) grid (16,6);
	  \draw [<->] (10,7) node (yaxis) [above] {$y$}
       	 |- (17,0) node (xaxis) [right] {$x$};
	\draw (8.5,0) -- (10,0);
	\draw (10,-1.5) -- (10,0);
	\draw[black,<->](13.5,7) -- (10,0) -- (16,0)  ;

	\draw[black,fill=stdmcolor] (10,0) circle (3 pt);
	\draw[black,fill=stdmcolor] (11,0) circle (3 pt);
	\draw[black,fill=stdmcolor] (11,1) circle (3 pt);
	\draw[black,fill=stdmcolor] (12,1) circle (3 pt);
	
	\foreach \x in {11,...,13}
	{
		\draw[black,fill=idealcolor] (\x,2*\x-20) circle (3 pt);
	};
	\foreach \x in {11,12}
	{
		\draw[black,fill=idealcolor] (\x+1,2*\x-20+1) circle (3 pt);
	};
	\foreach \x in {12,...,16}
	\draw[black,fill=idealcolor] (\x,0) circle (3 pt);
	\foreach \x in {10,...,14}
	\draw[black,fill=idealcolor] (2+\x,2) circle (3 pt);
	\foreach \x in {10,...,13}
	\draw[black,fill=idealcolor] (3+\x,1) circle (3 pt);
	\foreach \x in {13,...,16}
	\foreach \y in {3,4}
	\draw[black,fill=idealcolor] (\x,\y) circle (3 pt);
	\foreach \x in {14,...,16}
	\foreach \y in {5,6}
	\draw[black,fill=idealcolor] (\x,\y) circle (3 pt);
	
	\draw[green!70!black,ultra thick] (12,1) circle (10 pt);
	\draw[green!70!black,ultra thick] (11,1) circle (10 pt);
	\draw[green!70!black,ultra thick] (11,0) circle (10 pt);
	\draw[green!70!black,ultra thick] (10,0) circle (10 pt);
	\end{tikzpicture}
        \caption{Standard pairs of $C_{O}= \langle x^2, xy^{2} \rangle$}
        \label{fig:2dnormal_std_primarymax}
    \end{subfigure}
    \caption{A primary decomposition of $\langle x^2y^2, x^3y \rangle$ in $\field[x,xy,xy^2]$}
    \label{fig:2dafffinesemigp_std_primary}
\end{figure*}

\begin{figure*}[t!]
    \centering
    \begin{subfigure}[t]{\threedtwofigure}
        \centering
       \tdplotsetmaincoords{90}{90} 
	\begin{tikzpicture}[tdplot_main_coords, scale=1]
	\tdplotsetrotatedcoords{20}{-10}{30}
	\foreach \x in {0,...,4}
	{
	\draw[step=1cm,gray,very thin, tdplot_rotated_coords] (0,\x,0) -- (0,\x,4);
	\draw[step=1cm,gray,very thin, tdplot_rotated_coords] (0,0,\x) -- (0,4,\x);
	\draw[step=1cm,gray,very thin, tdplot_rotated_coords] (1,\x,0) -- (1,\x,4);
	\draw[step=1cm,gray,very thin, tdplot_rotated_coords] (1,0,\x) -- (1,4,\x);
	\draw[step=1cm,gray,very thin, tdplot_rotated_coords] (2,\x,0) -- (2,\x,4);
	\draw[step=1cm,gray,very thin, tdplot_rotated_coords] (2,0,\x) -- (2,4,\x);
	};
	\foreach \y in {0,...,4}
	\foreach \z in {0,...,4}
	\draw[step=1cm,gray,very thin, tdplot_rotated_coords] (0,\y,\z) -- (2,\y,\z);

	\draw[thick,->,tdplot_rotated_coords] (0,0,0) -- (6,0,0) node[anchor=south west]{$x$}; 
	\draw[thick,->,tdplot_rotated_coords] (0,0,0) -- (0,5.5,0) node[anchor=south west]{$y$}; 
	\draw[thick,->,tdplot_rotated_coords] (0,0,0) -- (0,0,5.5) node[anchor=south]{$z$};
	\foreach \y in {0,...,2}
		\foreach \z in {0,...,2}
		{
			\draw[black,fill=black,tdplot_rotated_coords] (1,2*\y,2*\z) circle (2 pt);
		};
	\foreach \y in {0,...,1}
		\foreach \z in {0,...,1}
			\draw[black,fill=black,tdplot_rotated_coords] (1,1+2*\y,1+2*\z) circle (2 pt);

	\foreach \y in {0,...,1}
		\foreach \z in {0,...,1}
			\draw[black,fill=black,tdplot_rotated_coords] (1,1+2*\y,2+2*\z) circle (2 pt);
	\foreach \y in {0,...,4}
		\foreach \z in {0,...,4}
			\draw[black,fill=black,tdplot_rotated_coords] (2,\y,\z) circle (2 pt);
	\foreach \y in {0,...,1}
	{
		\draw[black,fill=black,tdplot_rotated_coords] (1,1+2*\y,0) circle (2 pt);
	};

	\foreach \y in {0,...,1}
	\foreach \z in {0,...,1}
	{
		\draw[black,fill=white,tdplot_rotated_coords] (0,2*\y+1,2*\z) circle (2 pt);
		\draw[black,fill=white,tdplot_rotated_coords] (0,2*\y+1,1) circle (2 pt);
		\draw[black,fill=white,tdplot_rotated_coords] (0,2*\y,2*\z+1) circle (2 pt);
		\draw[black,fill=white,tdplot_rotated_coords] (0,2*\y+1,3) circle (2 pt);
		\draw[black,fill=white,tdplot_rotated_coords] (0,2*\y+1,4) circle (2 pt);
	};
	\draw[black,fill=white,tdplot_rotated_coords] (0,4,1) circle (2 pt);
	\draw[black,fill=white,tdplot_rotated_coords] (0,4,3) circle (2 pt);

	\foreach \z in {0,...,2}
	\foreach \y in {0,...,2}
	{
		\draw[black,fill=yellow,tdplot_rotated_coords] (0,2*\y,2*\z) circle (2 pt);
	};
	\foreach \z in {0,...,1}
	\foreach \y in {0,...,2}
	{
		\draw[black,fill=red,tdplot_rotated_coords] (1,2*\y,1+2*\z) circle (2 pt);
	};
	\end{tikzpicture}
        \caption{$C_{F}=\langle x,xy,xyz,xyz^{2} \rangle$}
        \label{fig:3dnonnormal_std_primary_022}
    \end{subfigure}
        \begin{subfigure}[t]{\threedtwofigure}
        \centering
       \tdplotsetmaincoords{90}{90} 
	\begin{tikzpicture}[tdplot_main_coords, scale=1]
	\tdplotsetrotatedcoords{20}{-10}{30}
	\foreach \x in {0,...,4}
	{
	\draw[step=1cm,gray,very thin, tdplot_rotated_coords] (0,\x,0) -- (0,\x,4);
	\draw[step=1cm,gray,very thin, tdplot_rotated_coords] (0,0,\x) -- (0,4,\x);
	\draw[step=1cm,gray,very thin, tdplot_rotated_coords] (1,\x,0) -- (1,\x,4);
	\draw[step=1cm,gray,very thin, tdplot_rotated_coords] (1,0,\x) -- (1,4,\x);
	\draw[step=1cm,gray,very thin, tdplot_rotated_coords] (2,\x,0) -- (2,\x,4);
	\draw[step=1cm,gray,very thin, tdplot_rotated_coords] (2,0,\x) -- (2,4,\x);
	};
	\foreach \y in {0,...,4}
	\foreach \z in {0,...,4}
	\draw[step=1cm,gray,very thin, tdplot_rotated_coords] (0,\y,\z) -- (2,\y,\z);

	\draw[thick,->,tdplot_rotated_coords] (0,0,0) -- (6,0,0) node[anchor=south west]{$x$}; 
	\draw[thick,->,tdplot_rotated_coords] (0,0,0) -- (0,5.5,0) node[anchor=south west]{$y$}; 
	\draw[thick,->,tdplot_rotated_coords] (0,0,0) -- (0,0,5.5) node[anchor=south]{$z$};
	\foreach \y in {0,...,2}
		\foreach \z in {0,...,2}
		{
			\draw[black,fill=black,tdplot_rotated_coords] (1,2*\y,2*\z) circle (2 pt);
		};
	\foreach \y in {0,...,1}
		\foreach \z in {0,...,1}
			\draw[black,fill=black,tdplot_rotated_coords] (1,1+2*\y,1+2*\z) circle (2 pt);

	\foreach \y in {0,...,1}
		\foreach \z in {0,...,1}
			\draw[black,fill=black,tdplot_rotated_coords] (1,1+2*\y,2+2*\z) circle (2 pt);
	\foreach \y in {0,...,4}
		\foreach \z in {0,...,4}
			\draw[black,fill=black,tdplot_rotated_coords] (2,\y,\z) circle (2 pt);
	\foreach \z in {0,...,1}
	\foreach \y in {0,...,2}
	{
		\draw[black,fill=black,tdplot_rotated_coords] (1,2*\y,1+2*\z) circle (2 pt);
	};
	\foreach \z in {1,...,2}
	\foreach \y in {0,...,2}
	\draw[black,fill=black,tdplot_rotated_coords] (0,2*\y,2*\z) circle (2 pt);
	\foreach \y in {0,...,1}
	\foreach \z in {0,...,1}
	{
		\draw[black,fill=white,tdplot_rotated_coords] (0,2*\y+1,2*\z) circle (2 pt);
		\draw[black,fill=white,tdplot_rotated_coords] (0,2*\y+1,1) circle (2 pt);
		\draw[black,fill=white,tdplot_rotated_coords] (0,2*\y,2*\z+1) circle (2 pt);
		\draw[black,fill=white,tdplot_rotated_coords] (0,2*\y+1,3) circle (2 pt);
		\draw[black,fill=white,tdplot_rotated_coords] (0,2*\y+1,4) circle (2 pt);
	};
	\draw[black,fill=white,tdplot_rotated_coords] (0,4,1) circle (2 pt);
	\draw[black,fill=white,tdplot_rotated_coords] (0,4,3) circle (2 pt);

	\foreach \y in {0,...,2}
		\draw[black,fill=yellow,tdplot_rotated_coords] (0,2*\y,0) circle (2 pt);
	\foreach \y in {0,...,1}
	{
		\draw[black,fill=blue,tdplot_rotated_coords] (1,1+2*\y,0) circle (2 pt);
	};
	\end{tikzpicture}
        \caption{$C_{G}= \langle x,xz,z^2,xyz \rangle$}
        \label{fig:3dnonnormal_std_primary020}
    \end{subfigure}
    \caption{A primary decomposition of $\langle x,xyz,xyz^{2} \rangle$ in $\field\left[x, x y, x z, x y z, y^{2}, z^{2}\right]$}
    \label{fig:3dafffinesemigp_std_primary}
\end{figure*}

\subsection{Irreducible Decomposition}

We now address the irreducible decomposition of monomial ideals in
semigroup rings using standard pairs. While the existence of monomial irreducible decomposition of monomial ideals in semigroup rings is
known~\cite{MR2110098}*{Corollary~11.5, Proposition~11.41}, an effective combinatorial
description of such a decomposition was missing from the
literature before this work. As a side note, we recall that monomial ideals in semigroup rings can be viewed as
binomial ideals in polynomial rings, and mention that
binomial ideals do not in general have irreducible
decompositions into binomial ideals~\cite{MR3518313}.

In order to decide whether an ideal is irreducible, one must examine
socles. That is the gist of the following result.

\begin{theorem}[\cite{MR1484973}*{Proposition~3.14}]
  \label{thm:irredComponents}
Let $(R,\mathfrak{m})$ be a local noetherian ring and let $M$ be a
finitely generated $R$-module. Let $\mathfrak{p}$ be an associated
prime of $M$, and denote its residue field by $K$. Let $N$ be the
submodule of $M$ whose elements are annihilated by $\mathfrak{p}$.
The number of $\mathfrak{p}$-primary components in an irredundant
irreducible decomposition of the null submodule of $M$ is the
dimension of the localization $N_{\mathfrak{p}}$ as a $K$-vector space.
\end{theorem}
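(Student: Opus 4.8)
The plan is to identify the quantity in question with the zeroth Bass number $\mu^{0}(\mathfrak{p},M)$ and then verify the equality $\mu^{0}(\mathfrak{p},M)=\dim_{K}N_{\mathfrak{p}}$ by hand, using only left-exactness of $\operatorname{Hom}_{R}(K,-)$ together with the socle characterization of irreducible submodules. First I would record the elementary identifications: $N=(0:_{M}\mathfrak{p})=\operatorname{Hom}_{R}(R/\mathfrak{p},M)$, hence $N_{\mathfrak{p}}=\operatorname{Hom}_{R_{\mathfrak{p}}}(K,M_{\mathfrak{p}})=(0:_{M_{\mathfrak{p}}}\mathfrak{p}R_{\mathfrak{p}})$; and the structural facts that a proper submodule $Q\subseteq M$ is \emph{irreducible} iff its injective hull $E(M/Q)$ is indecomposable, in which case $M/Q$ is coprimary with a single associated prime $\mathfrak{q}_{Q}$, so that every irreducible submodule is primary and the ``$\mathfrak{p}$-primary components'' in an irredundant irreducible decomposition $(0)=\bigcap_{i=1}^{r}Q_{i}$ are precisely those $Q_{i}$ with $\mathfrak{q}_{Q_{i}}=\mathfrak{p}$.

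Next I would reduce to the case $\mathfrak{p}=\mathfrak{m}$ by localizing the decomposition at $\mathfrak{p}$. Exactness of localization gives $(0)=\bigcap_{i}(Q_{i})_{\mathfrak{p}}$ in $M_{\mathfrak{p}}$; the components with $\mathfrak{q}_{Q_{i}}\not\subseteq\mathfrak{p}$ become the whole module and are discarded, the others remain proper and irreducible with associated prime $\mathfrak{q}_{Q_{i}}R_{\mathfrak{p}}$, the surviving $\mathfrak{p}$-primary ones are exactly the original $\mathfrak{p}$-primary ones, and irredundancy is preserved (if $\bigcap_{i\neq j}Q_{i}\neq(0)$ in $M$ then, being a nonzero submodule of the coprimary module $M/Q_{j}$, it has nonempty $\operatorname{Ass}$ contained in $\{\mathfrak{q}_{Q_{j}}\}\subseteq\{\mathfrak{p}\}$, hence stays nonzero after localizing). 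Since $\dim_{K}N_{\mathfrak{p}}$ is unaffected, I may assume from the outset that $R$ is local with maximal ideal $\mathfrak{m}=\mathfrak{p}$ and $N=\operatorname{soc}(M)=(0:_{M}\mathfrak{m})$; the point of the reduction is that now no associated prime of $M$ lies strictly above $\mathfrak{p}$.

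Then I would prove $s:=\#\{\,i:\mathfrak{q}_{Q_{i}}=\mathfrak{m}\,\}=\dim_{K}\operatorname{soc}(M)$ by two inequalities. For ``$\le$'': left-exactness of $\operatorname{soc}(-)=\operatorname{Hom}_{R}(K,-)$ applied to $M\hookrightarrow\bigoplus_{i}M/Q_{i}$ gives $\operatorname{soc}(M)\hookrightarrow\bigoplus_{i}\operatorname{soc}(M/Q_{i})$, where $\operatorname{soc}(M/Q_{i})=0$ when $\mathfrak{q}_{Q_{i}}\neq\mathfrak{m}$ (since $\mathfrak{m}\notin\operatorname{Ass}(M/Q_{i})$) and $\operatorname{soc}(M/Q_{i})$ is one-dimensional when $\mathfrak{q}_{Q_{i}}=\mathfrak{m}$ (it is a nonzero subspace of $\operatorname{soc}(E(K))\cong K$), so $\operatorname{soc}(M)\hookrightarrow K^{s}$. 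For ``$\ge$'': for each $j$ with $\mathfrak{q}_{Q_{j}}=\mathfrak{m}$, irredundancy gives $T_{j}:=\bigcap_{i\neq j}Q_{i}\neq(0)$, and $T_{j}\cap Q_{j}=(0)$ embeds $T_{j}$ into the $\mathfrak{m}$-coprimary module $M/Q_{j}$, so $\operatorname{soc}(T_{j})\neq(0)$; choosing $0\neq y_{j}\in\operatorname{soc}(T_{j})\subseteq\operatorname{soc}(M)$, the element $y_{j}$ dies in every $M/Q_{i}$ with $i\neq j$ and is nonzero in $M/Q_{j}$ (otherwise $y_{j}\in\bigcap_{i}Q_{i}=(0)$), so the $y_{j}$ have pairwise disjoint singleton supports in $\bigoplus_{i}\operatorname{soc}(M/Q_{i})$ and are thus $K$-linearly independent in $\operatorname{soc}(M)$. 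Combining the two bounds yields $s=\dim_{K}\operatorname{soc}(M)=\dim_{K}N_{\mathfrak{p}}$, which in particular shows $s$ is independent of the chosen decomposition.

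I expect the main friction to be the localization step: tracking which irreducible components survive, checking they remain irreducible and that the decomposition remains irredundant over $R_{\mathfrak{p}}$, and seeing that exactly the $\mathfrak{p}$-primary ones stay relevant. One can avoid localizing and argue over $R$ directly, but then the ``$\le$'' bound acquires extra summands $\operatorname{Hom}_{R}(R/\mathfrak{p},M/Q_{i})$ coming from associated primes of $M$ strictly above $\mathfrak{p}$, and one must show these do not enlarge the image of $\operatorname{Hom}_{R}(R/\mathfrak{p},M)$ — which is essentially the localization statement again. Everything else is a routine use of left-exactness of $\operatorname{Hom}_{R}(K,-)$, the equality $\operatorname{soc}(E(K))\cong K$, and the dictionary between coprimary modules and associated primes. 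As a sanity check, the assertion is equivalent to $\mu^{0}(\mathfrak{p},M)=\dim_{K}\operatorname{Hom}_{R_{\mathfrak{p}}}(K,M_{\mathfrak{p}})$, Bass's defining formula for the zeroth Bass number, so the same conclusion also drops out of the uniqueness of minimal injective resolutions.
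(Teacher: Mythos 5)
Your proof is correct. Note, however, that the paper does not prove this statement at all: it is imported verbatim by citation (Proposition~3.14 of the cited reference) and used as a black box in Corollary~\ref{coro:irreducible}, so there is no in-paper argument to compare against. What you have written is the standard self-contained proof of this classical fact --- identify the count with the zeroth Bass number $\dim_K\operatorname{Hom}_{R_{\mathfrak p}}(K,M_{\mathfrak p})$, localize to reduce to $\mathfrak p=\mathfrak m$, and get the two inequalities from left-exactness of $\operatorname{soc}(-)$ together with $\operatorname{soc}(E(K))\cong K$ and irredundancy --- and all the steps (survival and irreducibility of the localized components, vanishing of $\operatorname{soc}(M/Q_i)$ when $\mathfrak q_{Q_i}\neq\mathfrak m$, and the disjoint-support argument for linear independence) check out.
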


We are now able to determine whether a monomial ideal in $\field[\NN
A]$ is irreducible.

\begin{corollary}
\label{coro:irreducible}
Let $I$ be a $\pf$-primary monomial ideal in $\field[\NN A]$.
The number of overlap classes of standard pairs of $I$ that are
maximal with respect to divisibility equals the number of
components in an irredundant irreducible decomposition
of $I$. In particular, $I$ is irreducible if and only if it has a
single overlap class 
of standard pairs that is maximal with respect to divisibility.
\end{corollary}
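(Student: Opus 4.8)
The plan is to invoke Theorem~\ref{thm:irredComponents} with $M=\field[\NN A]/I$ and $\mathfrak p=\pf$, after localizing at $\pf$ so that the ambient ring is local Noetherian; this costs nothing, since $I$ is $\pf$-primary, so $I=I\field[\NN A]_{\pf}\cap\field[\NN A]$ and every component of an irredundant irreducible decomposition of $I$ is $\pf$-primary and corresponds to a component of the analogous decomposition of $I\field[\NN A]_{\pf}$. Writing $K=\operatorname{Frac}(\field[\NN F])$ for the residue field of $\pf$ and $N=(I:\pf)/I$ for the submodule of $\field[\NN A]/I$ annihilated by $\pf$, the module $N$ is naturally a $\field[\NN F]$-module (via $\field[\NN A]/\pf\cong\field[\NN F]$), and its localization at $\pf$ is $N\otimes_{\field[\NN F]}K$. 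Thus Theorem~\ref{thm:irredComponents} reduces the corollary to the identity
\[
\dim_K\bigl(N\otimes_{\field[\NN F]}K\bigr)=\#\{\text{overlap classes of standard pairs of }I\text{ maximal with respect to divisibility}\}.
\]

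Next I would describe $N$ combinatorially. As $\pf$ and $I$ are $\ZZ^d$-graded, $N$ is the $\field$-span of the monomials $t^b$ with $b\in\stdMono(I)$ and $\pf t^b\subseteq I$, that is, with $(I:t^b)=\pf$. Since $I$ is $\pf$-primary all its standard pairs belong to $F$, and the proof of Proposition~\ref{prop:primary} identifies these exponents precisely: if $b\in a+\NN F$ for a standard pair $(a,F)$ in a maximal overlap class then $(I:t^b)=\pf$, while if every standard pair $(a,F)$ with $b\in a+\NN F$ has non-maximal overlap class then that proof exhibits $c\in\NN A\minus\NN F$ with $t^{b+c}\notin I$, so $\pf t^b\not\subseteq I$. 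Every $b\in\stdMono(I)$ lies in $a+\NN F$ for some standard pair (as used in the proof of Theorem~\ref{thm:primaryDecomposition}), and if the translates $a+\NN F$ and $a'+\NN F$ of two standard pairs belonging to $F$ meet then $a-a'\in\ZZ F$, so the two pairs overlap; hence the standard pairs whose translates contain a given $b$ all lie in one overlap class, and $t^b\in N$ exactly when that class is maximal. Letting $N_{[a,F]}$ be the span of the monomials so attached to a maximal class $[a,F]$, we obtain $N=\bigoplus_{[a,F]\text{ maximal}}N_{[a,F]}$, a direct sum of $\field[\NN F]$-submodules since distinct overlap classes lie in distinct cosets of $\ZZ F$ in $\ZZ^d$.

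It then remains to show $\dim_K\bigl(N_{[a,F]}\otimes_{\field[\NN F]}K\bigr)=1$ for each maximal class. For any $f\in\NN F$, multiplication by $t^f$ carries $t^b\in N_{[a,F]}$ to the nonzero monomial $t^{b+f}\in N_{[a,F]}$, so fixing one nonzero $t^b$ gives a free rank-one $\field[\NN F]$-submodule $\field[\NN F]\cdot t^b\subseteq N_{[a,F]}$; since localization is exact, $\dim_K\ge 1$. For the reverse inequality, the $\field$-linear map $t^b\mapsto t^{b-a}$ (note $b-a\in\ZZ F$) embeds $N_{[a,F]}$ as a $\field[\NN F]$-submodule of $\field[\ZZ F]$, and $\field[\ZZ F]$ is the localization of $\field[\NN F]$ inverting the nonzero monomials $t^f$, $f\in\NN F$, so $\field[\ZZ F]\otimes_{\field[\NN F]}K=K$ and $\dim_K\bigl(N_{[a,F]}\otimes_{\field[\NN F]}K\bigr)\le 1$. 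Summing over the finitely many maximal overlap classes (Lemma~\ref{lemma:finitelyManyMaximalClasses}) gives the displayed identity, and the ``in particular'' is the case of a single summand.

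I expect the main obstacle to be the second step, specifically making the two inclusions in the description of $N$ watertight and verifying that no standard monomial sits simultaneously in the translate of a maximal standard pair and in that of a non-maximal one --- this is precisely what makes the count come out by overlap classes rather than by individual standard pairs, and it is where one must be careful about divisibility being only a partial order on overlap classes (Lemma~\ref{lemma:divPartialOrder}). The homological reduction through Theorem~\ref{thm:irredComponents} and the rank-one computation for each summand should be routine once $N$ is pinned down.
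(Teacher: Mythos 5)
Your proposal is correct and follows essentially the same route as the paper: both invoke Theorem~\ref{thm:irredComponents}, identify the socle $(I:\pf)/I$ via the argument in the proof of Proposition~\ref{prop:primary} as spanned by the monomials coming from maximal overlap classes, and count its dimension over the residue field by noting that monomials from overlapping pairs differ by units after localization. Your version merely makes explicit some details the paper leaves implicit (the reduction to the local case, the direct-sum decomposition of $N$ over maximal overlap classes, and the rank-one computation for each summand), all of which are sound.
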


\begin{proof}
Theorem~\ref{thm:primaryDecomposition} shows that all
  standard pairs of $I$ belong to $F$. The proof of
  Proposition~\ref{prop:primary} shows that, in this situation, the
  submodule of $\field[\NN A]/I$ whose elements are annihilated by
  $\pf$ is spanned as a $\field$-vector space by the monomials $t^b$
  such that $b \in a+\NN F$ for some standard pair $(a,F)$ whose
  overlap class is maximal with respect to divisibility. After
  localization at $\pf$, this module becomes a vector space over the
  residue field whose dimension equals the number of overlap classes
  of standard pairs that are maximal with respect to
  divisibility. This assertion follows from the following observations:
  $b \in a+\NN F$ and $b' \in a'+\NN F$ 
  where $(a,F)$ and $(a',F)$ are overlapping standard pairs,
  then $b-b' \in \ZZ F$, so that $t^{b-b'}$ is a unit after
  localization at $\pf$. Note also that a linear combination of monomials with
    coefficients in the residue field can only be zero if the pairwise
    differences of the exponents of the monomials belong to $\ZZ F$.
   Now the desired result follows from Theorem~\ref{thm:irredComponents}.  
\end{proof}

By Theorem~\ref{thm:primaryDecomposition}, in order to perform
irreducible decompositions of monomial ideals, it is enough to do it
for primary monomial ideals.

\begin{proposition}
  \label{prop:irreducibleDecomposition}
  Let $I$ be a $\pf$-primary monomial ideal in $\field[\NN A]$, and
  let $[b_1,F],\dots,[b_\ell,F]$ be the maximal overlap classes of
  standard pairs of $I$ with respect to divisibility. For each $1 \leq
  i \leq \ell$, let $T_i=\{c \in
  b+\NN F \mid (b,F) \text{ is a standard pair of } I \text{ whose
    overlap class divides } [b_i,F]\}$. Then $T_i$ is the set of standard
  monomials of a monomial ideal $J_i$. Moreover $J_i \supset I$, $J_i$
  is irreducible, and $I = J_1 \cap \dots \cap J_\ell$ is an
  irredundant irreducible decomposition of $I$.
\end{proposition}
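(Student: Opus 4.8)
The plan is to reduce everything to an explicit description of $\stdPairs(J_i)$; the one nontrivial structural input is Theorem~\ref{thm:primaryDecomposition}, which guarantees that, since $I$ is $\pf$-primary, \emph{every} standard pair of $I$ belongs to $F$. In particular each $(b,F)$ occurring in the definition of $T_i$ is a standard pair of $I$ in the literal sense, and the relation ``$[b,F]$ divides $[b_i,F]$'' is the partial order on overlap classes furnished by Lemma~\ref{lemma:divPartialOrder}. I would first dispatch the routine assertions. To see that $T_i$ is the standard-monomial set of a monomial ideal $J_i$, it suffices to check that $T_i$ is closed under division in $\NN A$: if $c\in T_i$, say $c\in b+\NN F$ with $[b,F]$ dividing $[b_i,F]$, and $a'\in\NN A$ divides $c$, then $t^{a'}\notin I$ (because $t^c\notin I$), so $a'$ lies in some standard pair $(a'',F)$ of $I$; writing $a'=a''+f$ with $f\in\NN F$ and $c=a'+g$ with $g\in\NN A$ gives $a''+(f+g)+\NN F=c+\NN F\subseteq b+\NN F$, so $(a'',F)$ divides $(b,F)$, hence $[a'',F]$ divides $[b_i,F]$ and $a'\in a''+\NN F\subseteq T_i$. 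Since every element of $T_i$ lies in $b+\NN F\subseteq\stdMono(I)$ for a standard pair $(b,F)$ of $I$, we have $T_i\subseteq\stdMono(I)$, i.e.\ $I\subseteq J_i$.

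The core of the proof, and the step I expect to be the main obstacle, is the identity
\[
  \stdPairs(J_i)=\{\,(b,F)\in\stdPairs(I) : [b,F]\text{ divides }[b_i,F]\,\}.
\]
For the inclusion $\supseteq$: such a $(b,F)$ has $b+\NN F\subseteq T_i=\stdMono(J_i)$, so it is a proper pair of $J_i$; if $(b,F)\prec(c,G)$ with $(c,G)$ proper for $J_i$, then $(c,G)$ is proper for $I$ as well (since $T_i\subseteq\stdMono(I)$), and maximality of $(b,F)$ among proper pairs of $I$ forces $(c,G)=(b,F)$. For $\subseteq$: given $(c,G)\in\stdPairs(J_i)$, since $c+\NN G\subseteq\stdMono(I)$ there is a standard pair of $I$ with $(c,G)\prec(a,F)$ --- the face is $F$ by Theorem~\ref{thm:primaryDecomposition} --- so $c\in a+\NN F$; on the other hand $c\in T_i$ puts $c$ in $b'+\NN F$ for some standard pair $(b',F)$ of $I$ with $[b',F]$ dividing $[b_i,F]$. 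Here is the key observation: since $c$ lies in both $a+\NN F$ and $b'+\NN F$, we get $a-b'\in\ZZ F$, so $(a,F)$ and $(b',F)$ overlap and $[a,F]=[b',F]$ divides $[b_i,F]$. Hence $a+\NN F\subseteq T_i$, so $(a,F)$ is proper for $J_i$, and maximality of $(c,G)$ yields $(c,G)=(a,F)$. The delicate point is precisely this propagation: one has to move from ``$c$ is standard for both $I$ and $J_i$'' to ``the standard pair of $I$ lying above $(c,G)$ already has overlap class dividing $[b_i,F]$'', and this rests on the elementary fact that two pairs on the same face whose $\NN F$-translates intersect must overlap, fed into the maximality built into Definition~\ref{def:standardPairs}.

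Granting the identity, the conclusions follow quickly. All standard pairs of $J_i$ belong to $F$, so $J_i$ is $\pf$-primary by Theorem~\ref{thm:primaryDecomposition}; and every overlap class of $\stdPairs(J_i)$ divides $[b_i,F]$, so $[b_i,F]$ is its unique maximal overlap class, whence $J_i$ is irreducible by Corollary~\ref{coro:irreducible}. For the decomposition, $I\subseteq\bigcap_iJ_i$ is already known, while $\stdMono(\bigcap_iJ_i)=T_1\cup\dots\cup T_\ell$; and any $a\in\stdMono(I)$ lies in $b+\NN F$ for a standard pair $(b,F)$ of $I$, whose overlap class divides some maximal class $[b_i,F]$ (as in the proof of Proposition~\ref{prop:primary}), so $a\in T_i$ --- giving $\stdMono(I)\subseteq\bigcup_iT_i$ and hence $I=\bigcap_iJ_i$. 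Finally, for irredundancy I would check that $b_i\in T_i$ but $b_i\notin T_k$ for $k\neq i$: if $b_i\in T_k$ then $b_i\in b'+\NN F$ for a standard pair $(b',F)$ of $I$ with $[b',F]$ dividing $[b_k,F]$, maximality of the standard pair $(b_i,F)$ forces $b'=b_i$, so $[b_i,F]$ divides $[b_k,F]$, and since $[b_i,F]$ is a maximal overlap class and $[b_1,F],\dots,[b_\ell,F]$ are distinct, $k=i$; thus $t^{b_i}\in\bigcap_{k\neq i}J_k$ while $t^{b_i}\notin I$, so no $J_i$ can be omitted.
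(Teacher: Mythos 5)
Your proof is correct and follows the same overall strategy as the paper: construct $J_i$, show all its standard pairs lie on $F$ with $[b_i,F]$ as the unique maximal overlap class, invoke Corollary~\ref{coro:irreducible}, and verify the intersection as in Theorem~\ref{thm:primaryDecomposition}. The only difference is that where the paper delegates to ``the arguments that proved Proposition~\ref{prop:component}'' (which use the support functions $\varphi_H$), you instead establish the explicit identity $\stdPairs(J_i)=\{(b,F)\in\stdPairs(I): [b,F]\text{ divides }[b_i,F]\}$ and deduce everything from $\stdPairs(J_i)\subseteq\stdPairs(I)$ together with the primariness of $I$ --- a valid and somewhat cleaner way to fill in the details in this primary setting.
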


\begin{proof}
  The arguments that proved Proposition~\ref{prop:component} show that
  $J_i$ is a monomial ideal all of whose standard pairs belong to
  $F$. By construction, $[b_i,F]$ is the unique overlap class
  of standard pairs of $J_i$ that is maximal with respect to
  divisibility. It follows that $J_i$ is irreducible by
  Corollary~\ref{coro:irreducible}. The decomposition
  $I=\cap_{i=1}^\ell J_i$ is verified in the same way as the primary
  decomposition in Theorem~\ref{thm:primaryDecomposition}.
\end{proof}

We emphasize that Theorem~\ref{thm:primaryDecomposition} and
Proposition~\ref{prop:irreducibleDecomposition} can be combined to
produce an irredundant irreducible decomposition of a monomial ideal
in $\field[\NN A]$ in terms of its standard pairs. 

\begin{example}
\label{ex:2dirred_decomposition_unique}
The primary decompositions in Example~\ref{ex:ideals3} are also
irredundant irreducible decompositions. We now give two more examples
for non-normal two-dimensional semigroup rings. In the first one, the
primary decomposition of Theorem~\ref{thm:primaryDecomposition} is
already irreducible, in the second one, the primary decomposition is
not irreducible. 
\begin{enumerate}[leftmargin=*]
  \item[(i)] 
Let $A = \big[ \begin{smallmatrix} 1 & 1 & 2 &  3 \\ 1 & 2 &
  0 & 0\end{smallmatrix}\big],$ and consider $I=\langle x^3 y^2 , x^5
y,x^6y \rangle \subset S:=\field[xy,xy^2,x^2,x^3] \cong \field[\NN A]$. The
irreducible decomposition arising from
Proposition~\ref{prop:irreducibleDecomposition} is depicted in
Figure~\ref{fig:2dirred_decomposition_unique}. Blue points in Figure~\ref{fig:2dirred_decomposition_0} including $\{(4,2), (5,3) \}$ are standard monomials of $I$. Whereas, black points in the shaded region are monomials in the ideal. There is no way to generate $(4,2)$ and $(5,3)$ from the generators of $I$ due to the holes $\{(1,0),(2,1)\}$ of $\NN A$. The socle of $\left(\field[\NN A]/J_{1}\right)_{\langle x^2,x^3\rangle}$ is generated by any monomial whose degree is in $(3,3) + \NN (1,2)$, since $[(3,3),\{(1,2) \}]$ is an overlap class which is maximal with respect to divisibility. Similarly, the socle of  $\left(\field[\NN A]/J_{2}\right)_{\langle xy^2\rangle}$ is generated by any monomial whose degree is in $\NN (1,0)$ since $[(0,0),\{(1,0) \}]$ is an overlap class which is maximal with respect to divisibility. Lastly, the socle of $\left(\field[\NN A]/J_{3}\right)$ is $\field\{ x^5y^3\}$ since the overlap class which is maximal with respect to divisibility is $[(5,3),\emptyset] = \{ ((5,3),\emptyset)\}$.
\item[(ii)]
Let $A = \big[\begin{smallmatrix} 2 & 0 & 1  \\ 0 & 1 &
  1 \end{smallmatrix}\big],$ and consider $I = \langle y^2 ,
xy^{2}\rangle \subset \field[x^{2},y,xy] \cong \field[\NN A]$. The
irreducible decomposition of $I$ arising from
Proposition~\ref{prop:irreducibleDecomposition} is depicted in Figure
\ref{fig:2dirred_decomposition_nonunique}. In this case, if $F
=\{(2,0)\}$, the color yellow is used for the standard pair
$((0,0),F)$, the color red for $((1,1),F)$ and blue for
$((0,1),F)$. The pairs $((0,0),F)$ and $((1,1),F)$ are maximal with
respect to divisibility and do not overlap because $(1,0) \notin \NN A$. The socle of $\left(\field[\NN A]/I\right)_{\langle y \rangle}$ is $\field(x^2)\{y,xy\}$ since $[(0,1),F]$ and $[(1,1),F]$ are all overlap classes which are maximal with respect to divisibility.
\end{enumerate}
\end{example}

\begin{figure}[t!]
\begin{subfigure}[t]{\fourfigure}
\[
\begin{tikzpicture}[scale=0.8]

\filldraw[black!20] (4.5,5) --  (3,2) -- (6,2) --  (6,5)--cycle;
\filldraw[black!20] (5.5,2) --  (5,1) -- (6,1) -- (6,2);

 \draw[step=1cm,gray,very thin] (0,0) grid (6,5);
 \draw [<->] (0,5.5) node (yaxis) [above] {$y$}
        |- (6.5,0) node (xaxis) [right] {$x$};
        
\draw[black,fill=stdmcolor] (1,0) circle (3 pt);
\foreach \x in {2,...,6}{
\draw[black,fill=stdmcolor] (\x,0) circle (3 pt);
\draw[black,fill=idealcolor] (6,\x-1)   circle (3pt);
}

\draw[black,fill=stdmcolor] (0,0)   circle (3pt);
\draw[black,fill=stdmcolor] (1,1)   circle (3pt);
\draw[black,fill=stdmcolor] (2,2)   circle (3pt);
\draw[black,fill=stdmcolor] (3,3)   circle (3pt);
\draw[black,fill=stdmcolor] (1,2)   circle (3pt);
\draw[black,fill=stdmcolor] (2,3)   circle (3pt);
\draw[black,fill=stdmcolor] (2,4)   circle (3pt);
\draw[black,fill=stdmcolor] (3,4)   circle (3pt);
\draw[black,fill=stdmcolor] (3,5)   circle (3pt);
\draw[black,fill=stdmcolor] (4,5)   circle (3pt);

\draw[black,fill=stdmcolor] (3,1)   circle (3pt);
\draw[black,fill=stdmcolor] (4,1)   circle (3pt);

\draw[black,fill=stdmcolor] (4,2)   circle (3pt);
\draw[black,fill=stdmcolor] (5,3)   circle (3pt);

\fill[black] (3,2)   circle (3pt);
\fill[black] (5,1)   circle (3pt);
\draw[black,fill=idealcolor] (4,3)   circle (3pt);
\draw[black,fill=idealcolor] (4,4)   circle (3pt);
\draw[black,fill=idealcolor] (5,4)   circle (3pt);
\draw[black,fill=idealcolor] (5,5)   circle (3pt);
\draw[black,fill=idealcolor] (5,2)   circle (3pt);

\draw[ultra thick, red,->] (0,0) -- (2.75,5.5) ;
\draw[ultra thick, red,->] (1,1) -- (3.25,5.5) ;
\draw[ultra thick, red,->] (2,2) -- (3.75,5.5) ;
\draw[ultra thick, red,->] (3,3) -- (4.25,5.5) ;
\draw[ultra thick, violet, ->] (0,0) -- (6.5,0) ;
\draw[green!70!black,ultra thick] (3,1) circle (10 pt);
\draw[green!70!black,ultra thick] (4,1) circle (10 pt);
\draw[green!70!black,ultra thick] (4,2) circle (10 pt);
\draw[green!70!black,ultra thick] (5,3) circle (10 pt);

\fill[white] (1,0)   circle (3pt);
\fill[white] (2,1)   circle (3pt);
\draw (1,0)   circle (3pt);
\draw (2,1)   circle (3pt);

\draw (4,2)   circle (3pt);
\draw (5,3)   circle (3pt);

\end{tikzpicture}
\]
\caption{Standard pairs of $I= \langle x^3 y^2 , x^5 y,x^6y \rangle$   }
\label{fig:2dirred_decomposition_0}
\end{subfigure}
\begin{subfigure}[t]{\fourfigure}
\[
\begin{tikzpicture}[scale=0.8]

\filldraw[black!20] (6,0) --  (2,0) -- (4.5,5) --  (6,5)--cycle;

 \draw[step=1cm,gray,very thin] (0,0) grid (6,5);
 \draw [<->] (0,5.5) node (yaxis) [above] {$y$}
        |- (6.5,0) node (xaxis) [right] {$x$};
        
\draw[black,fill=stdmcolor] (1,0) circle (3 pt);

\draw[black,fill=stdmcolor] (0,0)   circle (3pt);
\draw[black,fill=stdmcolor] (1,1)   circle (3pt);
\draw[black,fill=stdmcolor] (2,2)   circle (3pt);
\draw[black,fill=stdmcolor] (3,3)   circle (3pt);
\draw[black,fill=stdmcolor] (1,2)   circle (3pt);
\draw[black,fill=stdmcolor] (2,3)   circle (3pt);
\draw[black,fill=stdmcolor] (2,4)   circle (3pt);
\draw[black,fill=stdmcolor] (3,4)   circle (3pt);
\draw[black,fill=stdmcolor] (3,5)   circle (3pt);
\draw[black,fill=stdmcolor] (4,5)   circle (3pt);

\fill[black] (3,2)   circle (3pt);
\fill[black] (5,1)   circle (3pt);
\draw[black,fill=idealcolor] (4,3)   circle (3pt);
\draw[black,fill=idealcolor] (4,4)   circle (3pt);
\draw[black,fill=idealcolor] (5,4)   circle (3pt);
\draw[black,fill=idealcolor] (5,5)   circle (3pt);
\draw[black,fill=idealcolor] (5,2)   circle (3pt);
\foreach \x in {2,...,6}{
\draw[black,fill=idealcolor] (\x,0) circle (3 pt);
\draw[black,fill=idealcolor] (6,\x-1)   circle (3pt);
}
\draw[black,fill=idealcolor] (3,1)   circle (3pt);
\draw[black,fill=idealcolor] (4,1)   circle (3pt);

\draw[black,fill=idealcolor] (4,2)   circle (3pt);
\draw[black,fill=idealcolor] (5,3)   circle (3pt);

\draw[ultra thick, red,->] (0,0) -- (2.75,5.5) ;
\draw[ultra thick, red,->] (1,1) -- (3.25,5.5) ;
\draw[ultra thick, red,->] (2,2) -- (3.75,5.5) ;
\draw[ultra thick, red,->] (3,3) -- (4.25,5.5) ;

\fill[white] (1,0)   circle (3pt);
\fill[white] (2,1)   circle (3pt);
\draw (1,0)   circle (3pt);
\draw (2,1)   circle (3pt);

\draw (4,2)   circle (3pt);
\draw (5,3)   circle (3pt);

\end{tikzpicture}
\]
\caption{Standard pairs of $J_{1}= \langle x^{2}, x^{3}\rangle$}
\label{fig:2dirred_decomposition_12}
\end{subfigure}
\begin{subfigure}[t]{\fourfigure}
\[
\begin{tikzpicture}[scale=0.8]

\filldraw[black!20] (2.5,5) --  (1,2) -- (6,2) --  (6,5)--cycle;
\filldraw[black!20] (1.5,2) --  (1,1) -- (6,1) -- (6,2);
\draw[black,->] (0,0) -- (2.75,5.5) ;

 \draw[step=1cm,gray,very thin] (0,0) grid (6,5);
 \draw [<->] (0,5.5) node (yaxis) [above] {$y$}
        |- (6.5,0) node (xaxis) [right] {$x$};
        
\draw[black,fill=stdmcolor] (1,0) circle (3 pt);
\foreach \x in {2,...,6}{
\draw[black,fill=stdmcolor] (\x,0) circle (3 pt);
\draw[black,fill=idealcolor] (6,\x-1)   circle (3pt);
}

\draw[black,fill=stdmcolor] (0,0)   circle (3pt);
\draw[black,fill=idealcolor] (1,1)   circle (3pt);
\draw[black,fill=idealcolor] (2,2)   circle (3pt);
\draw[black,fill=idealcolor] (3,3)   circle (3pt);
\draw[black,fill=idealcolor] (1,2)   circle (3pt);
\draw[black,fill=idealcolor] (2,3)   circle (3pt);
\draw[black,fill=idealcolor] (2,4)   circle (3pt);
\draw[black,fill=idealcolor] (3,4)   circle (3pt);
\draw[black,fill=idealcolor] (3,5)   circle (3pt);
\draw[black,fill=idealcolor] (4,5)   circle (3pt);

\fill[black] (3,2)   circle (3pt);
\fill[black] (5,1)   circle (3pt);
\draw[black,fill=idealcolor] (4,3)   circle (3pt);
\draw[black,fill=idealcolor] (4,4)   circle (3pt);
\draw[black,fill=idealcolor] (5,4)   circle (3pt);
\draw[black,fill=idealcolor] (5,5)   circle (3pt);
\draw[black,fill=idealcolor] (5,2)   circle (3pt);
\draw[black,fill=idealcolor] (3,1)   circle (3pt);
\draw[black,fill=idealcolor] (4,1)   circle (3pt);

\draw[black,fill=idealcolor] (4,2)   circle (3pt);
\draw[black,fill=idealcolor] (5,3)   circle (3pt);

\draw[ultra thick, violet, ->] (0,0) -- (6.5,0) ;

\fill[white] (1,0)   circle (3pt);
\fill[white] (2,1)   circle (3pt);
\draw (1,0)   circle (3pt);
\draw (2,1)   circle (3pt);

\draw (4,2)   circle (3pt);
\draw (5,3)   circle (3pt);

\end{tikzpicture}
\]
\caption{Standard pairs of $J_{2}=\langle xy, xy^{2} \rangle $ }
\label{fig:2dirred_decomposition_20}
\end{subfigure}
\begin{subfigure}[t]{\fourfigure}
\[
\begin{tikzpicture}[scale=0.8]

\filldraw[black!20] (2.5,5) --  (2,4) -- (6,4) --  (6,5)--cycle;
\filldraw[black!20] (4.5,5) --  (3,2) -- (6,2) --  (6,5)--cycle;
\filldraw[black!20] (5,2) --  (4,0) -- (6,0) -- (6,2);

 \draw[step=1cm,gray,very thin] (0,0) grid (6,5);
 \draw [<->] (0,5.5) node (yaxis) [above] {$y$}
        |- (6.5,0) node (xaxis) [right] {$x$};
        
\draw[black,fill=stdmcolor] (1,0) circle (3 pt);
\foreach \x in {2,...,6}{
\draw[black,fill=idealcolor] (6,\x-1)   circle (3pt);
}
\draw[black,fill=stdmcolor] (2,0) circle (3 pt);
\draw[black,fill=stdmcolor] (3,0) circle (3 pt);
\draw[black,fill=stdmcolor] (0,0)   circle (3pt);
\draw[black,fill=stdmcolor] (1,1)   circle (3pt);
\draw[black,fill=stdmcolor] (2,2)   circle (3pt);
\draw[black,fill=stdmcolor] (3,3)   circle (3pt);
\draw[black,fill=stdmcolor] (1,2)   circle (3pt);
\draw[black,fill=stdmcolor] (2,3)   circle (3pt);

\draw[black,fill=stdmcolor] (3,1)   circle (3pt);
\draw[black,fill=stdmcolor] (4,1)   circle (3pt);

\draw[black,fill=stdmcolor] (4,2)   circle (3pt);
\draw[black,fill=stdmcolor] (5,3)   circle (3pt);

\fill[black] (3,2)   circle (3pt);
\fill[black] (5,1)   circle (3pt);
\draw[black,fill=idealcolor] (4,3)   circle (3pt);
\draw[black,fill=idealcolor] (4,4)   circle (3pt);
\draw[black,fill=idealcolor] (5,4)   circle (3pt);
\draw[black,fill=idealcolor] (5,5)   circle (3pt);
\draw[black,fill=idealcolor] (5,2)   circle (3pt);
\draw[black,fill=idealcolor] (2,4)   circle (3pt);
\draw[black,fill=idealcolor] (3,4)   circle (3pt);
\draw[black,fill=idealcolor] (3,5)   circle (3pt);
\draw[black,fill=idealcolor] (4,5)   circle (3pt);

\draw[black,fill=idealcolor] (4,0)   circle (3pt);
\draw[black,fill=idealcolor] (5,0)   circle (3pt);
\draw[black,fill=idealcolor] (6,0)   circle (3pt);

\foreach \x in {0,...,3}
{
\draw[green!70!black,ultra thick] (\x,\x) circle (10 pt);
\draw[green!70!black,ultra thick] (\x+2,\x) circle (10 pt);
};
\draw[green!70!black,ultra thick] (4,1) circle (10 pt);
\draw[green!70!black,ultra thick] (3,0) circle (10 pt);
\draw[green!70!black,ultra thick] (1,2) circle (10 pt);
\draw[green!70!black,ultra thick] (2,3) circle (10 pt);

\fill[white] (1,0)   circle (3pt);
\fill[white] (2,1)   circle (3pt);
\draw (1,0)   circle (3pt);
\draw (2,1)   circle (3pt);

\draw (4,2)   circle (3pt);
\draw (5,3)   circle (3pt);

\end{tikzpicture}
\]
\caption{Standard pairs of $J_{3}=\langle x^{4}, x^{3}y^{2}, x^{2}y^{4},x^3y^4,x^5\rangle$ }
\label{fig:2dirred_decomposition_00}
\end{subfigure}
\caption{An irredundant irreducible decomposition of $I=J_{1} \cap J_{2} \cap J_{3}$ in $\field[xy,xy^2,x^2,x^3]$.}
\label{fig:2dirred_decomposition_unique}
\end{figure}
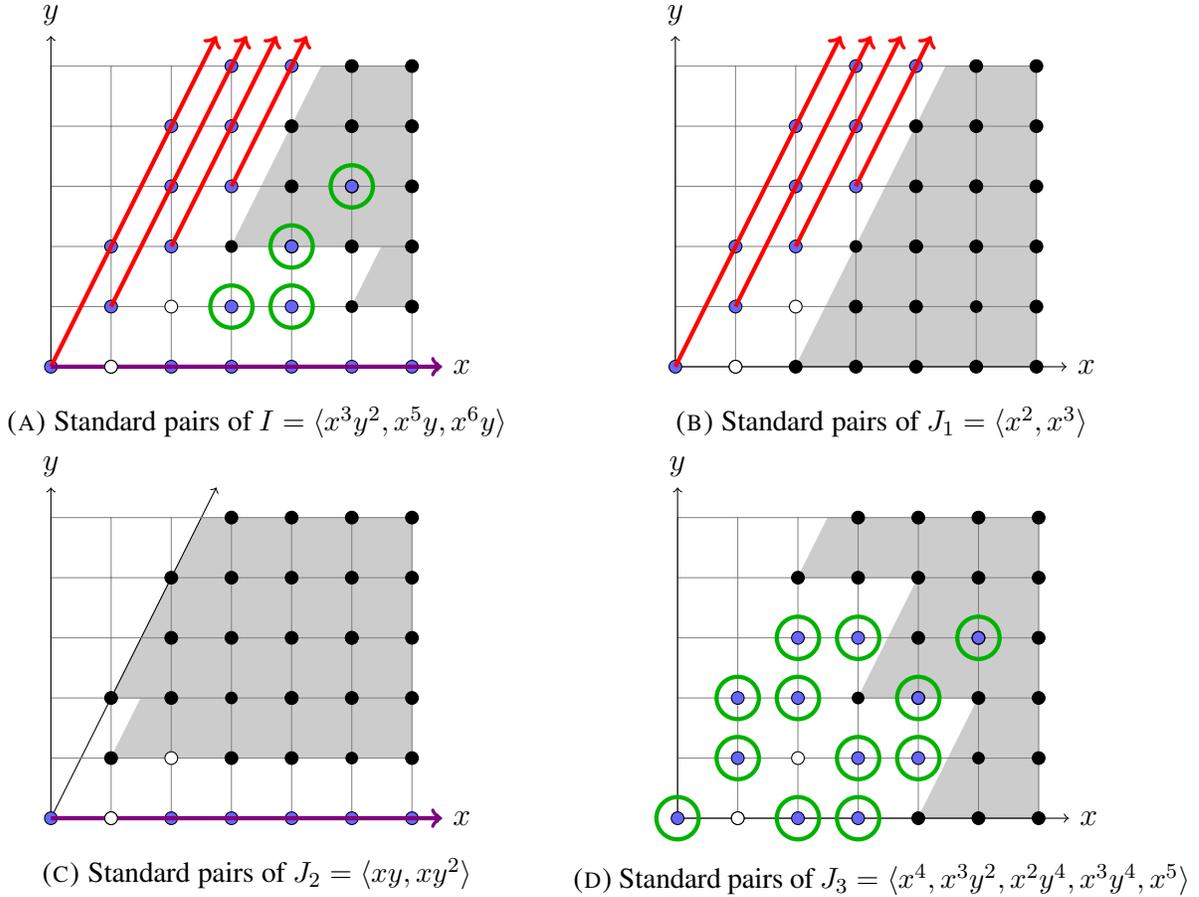

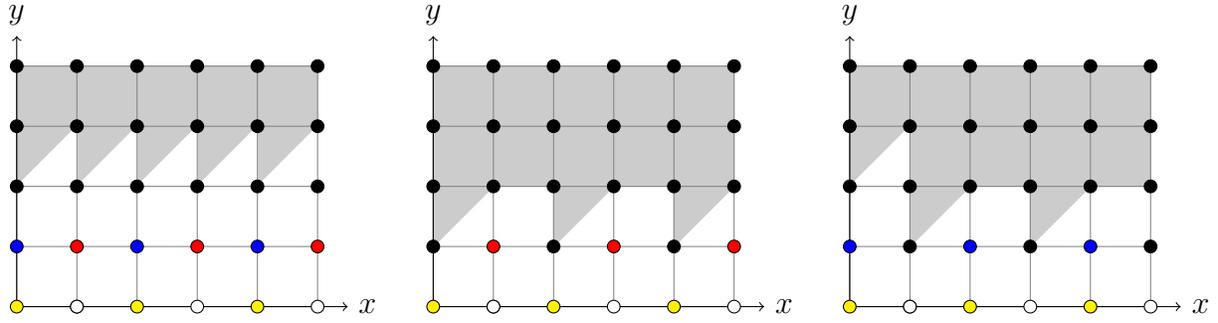
\begin{figure}[t!]
\begin{subfigure}[t]{\threefigure}
\[
\begin{tikzpicture}[scale=0.8]

\filldraw[black!20] (0,2) --  (0,4) -- (5,4) --  (5,2)--cycle;
\foreach \x in {0,...,4}
\filldraw[white] (\x,2) --  (\x+1,3) -- (\x+1,2) --  cycle;

 \draw[step=1cm,gray,very thin] (0,0) grid (5,4);
 \draw [<->] (0,4.5) node (yaxis) [above] {$y$}
        |- (5.5,0) node (xaxis) [right] {$x$};
        
\draw[black,fill=stdmcolor] (1,0) circle (3 pt);
\foreach \x in {0,...,2}{
\draw[black,fill=yellow] (2*\x,0) circle (3 pt);
\draw[black,fill=blue] (2*\x,1) circle (3 pt);
\draw[black,fill=red] (2*\x+1,1) circle (3 pt);
}

\foreach \x in {0,...,5}
\foreach \y in {2,...,4}
\draw[black,fill=idealcolor] (\x,\y)   circle (3pt);

\foreach \x in {1,3,5}
\draw[black,fill=white] (\x,0) circle (3 pt);
\end{tikzpicture}
\]
\caption{Standard pairs of $I= \langle y^2 , xy^2 \rangle$   }
\label{fig:2dirred_decomposition_n_0}
\end{subfigure}
\begin{subfigure}[t]{\threefigure}
\[
\begin{tikzpicture}[scale=0.8]

\filldraw[black!20] (0,2) --  (0,4) -- (5,4) --  (5,2)--cycle;
\filldraw[black!20] (0,1) --  (0,2) -- (1,2) --cycle;
\filldraw[black!20] (2,1) --  (2,2) -- (3,2) --cycle;
\filldraw[black!20] (4,1) --  (4,2) -- (5,2) --cycle;

 \draw[step=1cm,gray,very thin] (0,0) grid (5,4);
 \draw [<->] (0,4.5) node (yaxis) [above] {$y$}
        |- (5.5,0) node (xaxis) [right] {$x$};
        
\draw[black,fill=stdmcolor] (1,0) circle (3 pt);
\foreach \x in {0,...,2}{
\draw[black,fill=yellow] (2*\x,0) circle (3 pt);
\draw[black,fill=black] (2*\x,1) circle (3 pt);
\draw[black,fill=red] (2*\x+1,1) circle (3 pt);
}

\foreach \x in {0,...,5}
\foreach \y in {2,...,4}
\draw[black,fill=idealcolor] (\x,\y)   circle (3pt);

\foreach \x in {1,3,5}
\draw[black,fill=white] (\x,0) circle (3 pt);
\end{tikzpicture}
\]
\caption{Standard pairs of $J_{1}= \langle y\rangle$}
\label{fig:2dirred_decomposition_n_1}
\end{subfigure}
\begin{subfigure}[t]{\threefigure}
\[
\begin{tikzpicture}[scale=0.8]

\filldraw[black!20] (0,2) --  (0,4) -- (5,4) --  (5,2)--cycle;
\filldraw[black!20] (1,1) --  (1,2) -- (2,2) --cycle;
\filldraw[black!20] (3,1) --  (3,2) -- (4,2) --cycle;

\filldraw[white] (0,2) --  (1,3) -- (1,2) --  cycle;

 \draw[step=1cm,gray,very thin] (0,0) grid (5,4);
 \draw [<->] (0,4.5) node (yaxis) [above] {$y$}
        |- (5.5,0) node (xaxis) [right] {$x$};
        
\draw[black,fill=stdmcolor] (1,0) circle (3 pt);
\foreach \x in {0,...,2}{
\draw[black,fill=yellow] (2*\x,0) circle (3 pt);
\draw[black,fill=blue] (2*\x,1) circle (3 pt);
\draw[black,fill=black] (2*\x+1,1) circle (3 pt);
}

\foreach \x in {0,...,5}
\foreach \y in {2,...,4}
\draw[black,fill=idealcolor] (\x,\y)   circle (3pt);

\foreach \x in {1,3,5}
\draw[black,fill=white] (\x,0) circle (3 pt);
\end{tikzpicture}
\]
\caption{Standard pairs of $J_{2}=\langle xy,y^2 \rangle $ }
\label{fig:2dirred_decomposition_n_2}
\end{subfigure}
\caption{An irredundant irreducible decomposition of $I=J_{1} \cap J_{2}$ in $\field[x^2,y,xy]$.}
\label{fig:2dirred_decomposition_nonunique}
\end{figure}

\subsection{Multiplicities and Counting Standard Pairs}

One of the main goals of this article is to provide an effective
computation of irreducible and primary decomposition of monomial
ideals in semigroup rings. Given our previous results, this can be
achieved once we know how to compute standard pairs. A key question 
then is whether a monomial ideal $I$ always has finitely many standard
pairs. We answer this question in the affirmative, by linking the
number of (overlap classes) of standard pairs to the multiplicities of
associated primes introduced in~\cite{MR1339920}, which we now recall.

Let $I$ be a monomial ideal in $\field[\NN A]$, and let $\pf$ be an
associated prime of $I$. Following~\cite{MR1339920}, we define
$\mult_I(\pf)$ to be the length of a maximal strictly increasing chain
of ideals
\begin{equation}
  \label{eqn:chain}
I = J_1 \subsetneq J_2 \subsetneq J_3 \subsetneq \cdots J_\ell
\subsetneq J
\end{equation}
where $J = \cup_{j>0} (I:\pf^j)$ is the intersection of the primary
components of $I$ with associated primes not containing $\pf$, and
each $J_k$ is the intersection of $J$ with some $\pf$-primary ideal. 
Equivalently, $\mult_I(\pf)$ is the length of the largest ideal of finite length
in $\field[\NN A]_{\pf} / I \field[\NN A]_{\pf}$. We emphasize that
$\mult_I(\pf)$ is finite.

The following statement generalizes~\cite{MR1339920}*{Lemma~3.3}. Our
argument here is based on the proof of that result.

\begin{proposition}
  \label{prop:multiplicity}
  Let $I$ be a monomial ideal in $\field[\NN A]$ and let $\pf$ be an
  associated prime of $I$. Then $\mult_I(\pf)$
  equals the number of overlap classes of standard pairs of $I$ that
  belong to $F$. 
\end{proposition}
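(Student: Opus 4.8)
The plan is to compute $\mult_I(\pf)$ as the length of the largest finite-length ideal in the localized quotient $\field[\NN A]_{\pf}/I\field[\NN A]_{\pf}$, and to match this against the count of overlap classes of standard pairs belonging to $F$. First I would reduce to the primary case: by Theorem~\ref{thm:primaryDecomposition}, $J = \cup_{j>0}(I:\pf^j)$ is the intersection of the $C_G$ for $G \neq F$, and passing to $J/I$ we may assume $I$ is $\pf$-primary, since the standard pairs of $I$ belonging to $F$ are precisely the standard pairs of $C_F$ (by the construction in Proposition~\ref{prop:component}, $C_F$ has the same maximal overlap classes belonging to $F$ as $I$, and more generally the same proper pairs belonging to $F$). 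So it suffices to show: if $I$ is $\pf$-primary, then $\mult_I(\pf)$ equals the total number of overlap classes of standard pairs of $I$ (all of which belong to $F$ by Theorem~\ref{thm:primaryDecomposition}).

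Next I would set up the combinatorial bookkeeping in the $\pf$-primary case. Localizing at $\pf$ inverts all $t^c$ with $c \in \NN F$ (and, more relevantly, makes $t^{a-a'}$ a unit whenever $(a,F)$ and $(a',F)$ overlap, by the argument already used in the proof of Corollary~\ref{coro:irreducible}). The key observation is that $\stdMono(I) = \bigcup (a_i + \NN F)$ over a choice of standard pairs $(a_i,F)$, one per overlap class — call the classes $[a_1,F],\dots,[a_m,F]$ — and that after localization the monomials in a single overlap class span a one-dimensional space over the residue field $K$ of $\pf$, so that $(\field[\NN A]_{\pf}/I\field[\NN A]_{\pf})$ is an $m$-dimensional $K$-vector space with basis indexed by the overlap classes. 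I would then argue that a strictly increasing chain $I = J_1 \subsetneq \cdots \subsetneq J_\ell \subsetneq J = \field[\NN A]$ as in~\eqref{eqn:chain} corresponds, after localization, to a strictly increasing chain of $K$-subspaces of this $m$-dimensional space, each of which is actually a submodule (i.e.\ an ideal of the quotient). The length of the longest such chain is $m$, which will give $\mult_I(\pf) \le m$. For the reverse inequality I would build an explicit chain realizing length $m$: order the overlap classes by a linear extension of the divisibility partial order so that a class precedes every class it divides, then successively remove (the overlap class of) a standard pair that is minimal among the remaining ones with respect to divisibility; each removal yields an ideal strictly containing the previous one (the set one deletes is $a_i + \NN F$ up to the finitely many exponents that were already in smaller-index pairs, and the argument of Proposition~\ref{prop:component} shows this really is the standard-monomial set of an ideal), producing a chain of length $m$ from $I$ to $\field[\NN A]$.

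The main obstacle I expect is the careful verification that the intermediate sets produced by deleting overlap classes one at a time are genuinely sets of standard monomials of monomial ideals $J_k$ with $J_k = J \cap (\text{some }\pf\text{-primary ideal})$ — i.e.\ that the chain we construct is of exactly the form allowed in the definition of $\mult_I(\pf)$, not merely a chain of ideals. This requires re-running the closure argument of Proposition~\ref{prop:component} (if $t^b$ is not in the deleted classes and $c \in \NN A$, then $t^{b+c}$ is not either) in this incremental setting, using that we delete in an order compatible with divisibility so that deleting a minimal remaining class cannot strand a monomial whose pair divides a retained one. A secondary subtlety is matching lengths precisely rather than just up to a bound: I would handle this by checking that each single-class deletion changes the colength by exactly one after localization (two monomials in distinct overlap classes have exponent difference not in $\ZZ F$, hence remain $K$-linearly independent after localizing at $\pf$, as noted in the proof of Corollary~\ref{coro:irreducible}), so both inequalities are tight and $\mult_I(\pf) = m$.
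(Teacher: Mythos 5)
Your argument for the $\pf$-primary case is essentially the right mechanism, but the reduction to that case is broken, and this is a genuine gap. You assert that the standard pairs of $I$ belonging to $F$ are precisely the standard pairs of $C_F$ (and, implicitly, that $\mult_I(\pf)=\mult_{C_F}(\pf)$). Both claims fail. Take the running example of the paper (Example~\ref{ex:ideals3}(1)): $I=\langle x^3y,xy^2\rangle\subset\field[x,y]$ with $F=\varnothing$, so $\pf=\langle x,y\rangle$. Then $C_F=\langle x^3,y^2\rangle$ has six standard pairs $((i,j),\varnothing)$ with $0\le i\le 2$, $0\le j\le 1$, and $\mult_{C_F}(\pf)=6$; but $I$ has only two standard pairs belonging to $\varnothing$, namely $((1,1),\varnothing)$ and $((2,1),\varnothing)$, and $\mult_I(\pf)=2$ (here $J=\cup_j(I:\pf^j)=\langle xy\rangle$ and $J/I$ is spanned by $xy$ and $x^2y$). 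The discrepancy comes from standard monomials such as $1,x,x^2,y$ that lie under standard pairs of $I$ attached to the \emph{larger} faces: they are standard monomials of $C_F$, yet they contribute nothing to $\mult_I(\pf)$ because they survive into $J$. Your stronger claim that $I$ and $C_F$ have the same \emph{proper} pairs belonging to $F$ is also false: $((5,0),\varnothing)$ is proper for $I$ but not for $C_F$. (A secondary slip: for $\pf$-primary $I$, the module $\field[\NN A]_{\pf}/I\field[\NN A]_{\pf}$ is not a $K$-vector space; it is a length-$m$ module whose composition factors are $K$. This does not hurt the length count, but the phrase ``chain of $K$-subspaces'' should be ``chain of submodules.'')

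The correct reduction, which is the one the paper makes, is to $\bar I=\cap_{G\supseteq F}C_G$ --- this is what localization at $\pf$ actually sees --- and $\bar I$ is \emph{not} $\pf$-primary in general. One must then build the chain~\eqref{eqn:chain} by deleting one maximal overlap class belonging to $F$ at a time while \emph{retaining} all standard pairs attached to faces strictly containing $F$: the requirement that each $J_k$ have the form $J\cap(\pf\text{-primary ideal})$ is exactly what forces those larger-face pairs to be kept, and the verification that each deletion still yields the standard-monomial set of an ideal uses maximality of the deleted class against all pairs on faces containing $F$, not only those on $F$ itself. Your peeling argument, as written, proves the proposition only for $\pf$-primary ideals; the general statement does not follow from it by substituting $C_F$ for $I$.
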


\begin{proof}
  Recall the decomposition $I = \cap_{G \in \mathscr{S}} C_G$
  from Theorem~\ref{thm:primaryDecomposition}. Let $\bar{I} = \cap_{G
    \in \mathscr{S}, G \supset F} C_G$. Then $\field[\NN A]_{\pf} / I
  \field[\NN A]_{\pf}$ is isomorphic to $\field[\NN A]_{\pf} /
  \bar{I} \field[\NN A]_{\pf}$, so that
  $\mult_I(\pf)=\mult_{\bar{I}}(\pf)$. Moreover, the standard pairs of
  $C_G$ are (essentially by construction) the standard pairs of $I$
  belonging to $G$. This implies that $I$ and $\bar{I}$ have the same
  standard pairs belonging to $F$. We have thus reduced the proof to the case
  when $I = \bar{I}$, and we assume this henceforth. In particular, $J
  = \cap_{G \in \mathscr{S}, G \neq F} C_G$ is the ideal used
  in~\eqref{eqn:chain} in this case.
 
  Set $J_1=I$. Let $[a,F]$ be an overlap
  class of standard pairs of $I$ which is maximal with respect to
  divisibility. Let 
\[
  T_2 = \bigcup_{
    \substack{
    (b,G) \in \stdPairs(I) \text{ with } \\
     G \supseteq F \text{
      and } (b,G) \notin [a,F]}} \bigg( b+ \NN G \bigg) \quad
\text{and} \quad 
  S_2 = \bigcup_{\substack{(b,F) \in \stdPairs(C_F) \\ (b,F) \notin [a,F]}}
  \bigg(b+ \NN F\bigg).
\]
    Then $T_2$ is the set of
    standard monomials of an ideal $J_2 \supsetneq I$, $S_2$ is the
    set of standard monomials of a $\pf$-primary ideal $C_2$ and $J_2
    = J \cap C_2$.
    
    To see that $T_2$ is the set of standard monomials of a monomial
    ideal $J_2$, let $b \notin T_2$ and $c \in \NN A$. If $b+c \in T_2$,
    then $b+c \in a'+\NN G$ for some standard pair $(a',G)$ of $I$
    (not belonging to $[a,F]$),
    and therefore $t^{b+c} \notin I$. We
    conclude that $t^b \notin I$, which implies that $b \in \hat{a} +
    \NN F$, where $(\hat{a},F)$ is a standard pair of $I$ belonging to
    $[a,F]$. 
    By assumption on $I$, all of its standard pairs belong to faces
    containing $F$. It follows that $(\hat{a},F)$ divides $(a',G)$,
    which implies that $G=F.$ Hence, $[a,F]$ is not maximal with respect
    to divisibility, a contradiction. 

    The same argument shows that $S_2$
    is the set of standard monomials of a monomial ideal $C_2$, which is
    primary by Proposition~\ref{prop:primary}. The equality 
    $J_2=J\cap C_2$ holds by construction. The localization
    of $J_2/I$ at $\pf$ is a one-dimensional vector space over the
    residue field since monomials in overlapping standard pairs
    differ by a unit in the localization.

    Applying this argument successively, one constructs a chain as
    in~\eqref{eqn:chain}, whose length is the number of overlap
    classes of standard pairs of $I$ belonging to $F$. This chain is
    maximal since the successive quotients are one-dimensional after
    localization at $\pf$.
\end{proof}

\begin{corollary}
  \label{coro:finitelyManyOverlapClasses}
  Let $I$ be a monomial ideal in $\field[\NN A]$. There are finitely
  many overlap classes of standard pairs of $I$.
\end{corollary}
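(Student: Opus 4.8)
The plan is to express the total number of overlap classes of standard pairs of $I$ as a finite sum of finite quantities, combining the primary decomposition of Theorem~\ref{thm:primaryDecomposition} with the multiplicity formula of Proposition~\ref{prop:multiplicity}.

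First I would set up the bookkeeping. By definition, every standard pair of $I$ is a pair $(a,F)$ for some face $F$ of $A$, and overlap is an equivalence relation defined only among pairs that belong to a fixed face. Hence the collection of overlap classes of standard pairs of $I$ is partitioned according to the face to which a class belongs, and the overlap classes belonging to $F$ form a nonempty set exactly when $F\in\mathscr{S}$ --- equivalently, by Theorem~\ref{thm:primaryDecomposition}(1), exactly when $\pf$ is an associated prime of $I$. Next I would note that $\mathscr{S}$ is finite: it is a subset of the set of faces of the strongly convex polyhedral cone $\RR_{\geq 0}A$, which is finite (equivalently, the finitely generated module $\field[\NN A]/I$ over the Noetherian ring $\field[\NN A]$ has only finitely many associated primes).

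Finally, for each $F\in\mathscr{S}$ the prime $\pf$ is associated to $I$, so Proposition~\ref{prop:multiplicity} applies and identifies the number of overlap classes of standard pairs of $I$ belonging to $F$ with $\mult_I(\pf)$, which is finite. Summing over the finite set $\mathscr{S}$ then gives that the number of overlap classes of standard pairs of $I$ equals $\sum_{F\in\mathscr{S}}\mult_I(\pf)<\infty$, as desired.

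I do not expect any genuine obstacle; the substantive work lies in Proposition~\ref{prop:multiplicity}, which is already available. The only points that warrant a moment's care are that finiteness of the \emph{maximal} overlap classes (Lemma~\ref{lemma:finitelyManyMaximalClasses}) does not by itself suffice --- one really needs the multiplicity count to control the non-maximal classes --- and that every standard pair is accounted for by some $F\in\mathscr{S}$, which is immediate from the definition of $\mathscr{S}$.
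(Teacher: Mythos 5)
Your argument is correct and is essentially identical to the paper's proof: both partition the overlap classes by the face they belong to, invoke Proposition~\ref{prop:multiplicity} to identify the count for each face with the (finite) multiplicity $\mult_I(\pf)$, and sum over the finitely many associated primes. Your write-up just spells out the bookkeeping in more detail.
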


\begin{proof}
  By Proposition~\ref{prop:multiplicity}, the number of overlap classes of
  standard pairs of $I$ is the sum of the multiplicities of its
  associated primes. Since these multiplicities are finite, and $I$
  has only finitely many associated primes, the desired result
  follows.
\end{proof}

The following is the main result of this section.

\begin{theorem}
  \label{thm:finitelyManyStdP}
  Let $I$ be a monomial ideal in $\field[\NN A]$. Then $I$ has
  finitely many standard pairs.
\end{theorem}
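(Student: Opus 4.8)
The plan is to reduce the statement to the already-established finiteness of overlap classes (Corollary~\ref{coro:finitelyManyOverlapClasses}) by showing that each overlap class of standard pairs contains only finitely many standard pairs. Thus it suffices to prove: if $(a,F)$ and $(a',F)$ are overlapping standard pairs of $I$, then there can be only finitely many standard pairs in their common overlap class. Since overlapping pairs satisfy $a-a'\in\ZZ F$, all pairs in a fixed overlap class have the form $(b,F)$ with $b\in a+\ZZ F$, and we must bound how many such $b$ can give rise to \emph{distinct} standard (i.e., $\prec$-maximal proper) pairs.

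The key step will be a minimality lemma (this is presumably Lemma~\ref{lemma:stdPairMinimality}, referenced in the preamble as the place where strong convexity is essential). The idea is the following. Fix a standard pair $(a,F)$ and consider another standard pair $(a',F)$ overlapping it, so $a' = a + z$ for some $z\in\ZZ F$. Write $z = z_+ - z_-$ with $z_+,z_-\in\NN F$. Then $a' + z_- = a + z_+ \in a+\NN F \subseteq \stdMono(I)$, so $(a'+z_-,F)$ is a proper pair; but $(a',F)$ is proper too and $(a'+z_-,F)\prec(a',F)$ would contradict maximality of $(a',F)$ unless $z_-$ "does nothing," i.e. unless $a'+\NN F = a'+z_-+\NN F$. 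The real content is to show that $\prec$-maximality forces a strong rigidity: within an overlap class, the sets $b+\NN F$ attached to distinct standard pairs are genuinely different maximal "shifted monoids," and strong convexity of $\RR_{\geq 0}A$ (hence of $\RR_{\geq 0}F$) prevents infinitely many such incomparable maximal translates from coexisting. Concretely, I would argue that if $(b,F)$ and $(b',F)$ are distinct standard pairs in the same overlap class, then neither $b-b'$ nor $b'-b$ lies in $\NN F$ (otherwise one pair would $\prec$-dominate the other), so $\{b : (b,F)\in\stdPairs(I)\text{ in this class}\}$ is an antichain in $(a+\ZZ F,\ \preceq_{\NN F})$; then apply Dickson's Lemma (Lemma~\ref{lemma:Dickson}) \emph{inside the face semigroup}, or more precisely inside $\NN F$ after translating, to conclude finiteness. (One must be a little careful: $a+\ZZ F$ is a coset of a group, not of $\NN F$, so I would pick a base point $b_0$ in the class, note that for any other $b$ in the class with $(b,F)$ standard we have $b-b_0\notin\NN F$, and similarly with roles reversed; use strong convexity of $\RR_{\geq 0}F$ to guarantee that the "negative parts" stay controlled, then invoke Dickson's Lemma on the finitely-generated monoid $\NN F$.)

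Assembling the pieces: by Corollary~\ref{coro:finitelyManyOverlapClasses} there are finitely many overlap classes of standard pairs of $I$; by the minimality/antichain argument above combined with Dickson's Lemma applied within each face, each overlap class contains finitely many standard pairs; therefore $\stdPairs(I)$ is finite.

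The main obstacle will be making the antichain argument precise in the face $F$. The subtlety is that standard pairs in one overlap class are indexed by a coset of the \emph{group} $\ZZ F$, and to apply Dickson's Lemma one needs an antichain living in the \emph{monoid} $\NN F$ (or in $\NN A$). Strong convexity of $\RR_{\geq 0}A$ is exactly what rules out a line in $\RR_{\geq 0}F$, which is what makes "$b-b'\notin\NN F$ and $b'-b\notin\NN F$ for distinct standard pairs" translate into an honest antichain with respect to divisibility in $\NN F$; without it one could have infinite families of translates that are pairwise incomparable yet whose differences all lie on a line, defeating the Dickson argument. Handling this carefully — likely by translating each class to start at a fixed representative and exploiting that a $\prec$-maximal proper pair cannot be properly contained in another proper pair of the same class — is where the real work lies; everything else is bookkeeping on top of the primary-decomposition and multiplicity results already proved.
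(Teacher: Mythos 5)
Your skeleton is the paper's: reduce to Corollary~\ref{coro:finitelyManyOverlapClasses} and show each overlap class is a divisibility antichain, then invoke Lemma~\ref{lemma:Dickson}. Your route to the antichain property is a legitimate variant: instead of the paper's Lemma~\ref{lemma:stdPairMinimality} (which shows $a$ is minimal for $\NN A$-divisibility in $(a+\RR F)\cap\NN A$, hence directly gives $a-a'\notin\NN A$), you argue from $\prec$-maximality that $b-b'\notin\NN F$ for distinct standard pairs in one overlap class. That argument is correct: if $b-b'\in\NN F$ then $b+\NN F\subseteq b'+\NN F$, and maximality of $(b,F)$ among proper pairs forces $(b,F)=(b',F)$ (strong convexity enters here, to make $\prec$ antisymmetric). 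Your preliminary musing about $a'+z_-$ is a red herring -- $(a'+z_-,F)\prec(a',F)$ never threatens maximality of $(a',F)$, since it lies \emph{below} it -- but you discard it anyway.

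The genuine gap is the last step. Lemma~\ref{lemma:Dickson} as stated applies to subsets of $\NN A$ that are antichains for $\NN A$-divisibility, and you have only established $b-b'\notin\NN F$. Your proposed fix -- translate and ``invoke Dickson's Lemma on the finitely-generated monoid $\NN F$,'' with strong convexity of $\RR_{\geq 0}F$ controlling ``negative parts'' -- does not typecheck: the exponents $b$ lie in a coset of $\ZZ F$ inside $\NN A$, not in $\NN F$, and no translation places them there. The correct repair is both simpler and different in character: since $b-b'\in\ZZ F$ and $F$ is a face, $\ZZ F\cap\NN A\subseteq\RR F\cap\RR_{\geq 0}A\cap\ZZ^d= \RR_{\geq 0}F\cap\ZZ^d$, so $\ZZ F\cap\NN A=\NN F$; hence $b-b'\notin\NN F$ already implies $b-b'\notin\NN A$, and Lemma~\ref{lemma:Dickson} applies in $\NN A$ exactly as stated. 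What is needed is the face property, not strong convexity of $\RR_{\geq 0}F$. With that one observation supplied, your proof closes and is a clean alternative to the paper's, bypassing Lemma~\ref{lemma:stdPairMinimality} entirely.
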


In order to prove Theorem~\ref{thm:finitelyManyStdP}, we need an
auxiliary result.

\begin{lemma}
\label{lemma:stdPairMinimality}
Let $I$ be a monomial ideal in $\field[\NN A]$. If $(a,F)$ is a
standard pair of $I$, then $t^a$ is minimal with respect to
divisibility in $\{ t^b \mid b \in (a + \RR F) \cap \NN A\}$. 
\end{lemma}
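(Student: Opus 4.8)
The plan is to argue by contradiction. Suppose $(a,F)$ is a standard pair of $I$ but $t^a$ is not minimal with respect to divisibility in $\{t^b \mid b \in (a+\RR F)\cap \NN A\}$. Then there exists $b \in (a+\RR F)\cap \NN A$ with $b \neq a$ such that $t^b \mid t^a$ in $\field[\NN A]$, i.e.\ $a - b \in \NN A$. Write $a = b + c$ with $c \in \NN A$, and note $c \in \RR F$ since both $a$ and $b$ lie in $a + \RR F$. I will use this to produce a proper pair of $I$ that strictly $\prec$-dominates $(a,F)$, contradicting maximality.

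The natural candidate is the pair $(b, F)$ (or a slight enlargement of it). First I would check that $(b,F)$ is a proper pair of $I$, i.e.\ $b + \NN F \subseteq \stdMono(I)$. Since $c \in \RR F \cap \NN A$ and $\RR_{\geq 0} F \cap A = F$ (the defining property of a face), we actually have $c \in \RR F \cap \NN A = \ZZ F \cap \NN A$; here I need to know more precisely that $c \in \NN F$ or at least that $b + \NN F$ and $a + \NN F$ are suitably comparable. The key observation is: because $c$ lies in the linear span $\RR F$ and $c \in \NN A$, and because $\RR_{\geq 0} A$ is strongly convex (so $\RR F \cap \RR_{\geq 0}A = \RR_{\geq 0}F$ when $F$ is a face — faces are "exposed"), we get $c \in \RR_{\geq 0}F \cap A \cdot \NN = \NN F$. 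Wait — more carefully: $c \in \NN A$ means $c \in \RR_{\geq 0}A$, and $c \in \RR F$; since $\RR F \cap \RR_{\geq 0}A = \RR_{\geq 0}F$ for a face, $c \in \RR_{\geq 0}F \cap \NN A = \NN F$ (using $\NN F = \NN A \cap \RR_{\geq 0}F$ from the preliminaries). So $c \in \NN F$, hence $a = b + c \in b + \NN F$, which gives $a + \NN F \subseteq b + \NN F$, i.e.\ $(a,F) \prec (b,F)$. Moreover this inclusion is strict as sets if $c \neq 0$ (since $a + \NN F$ is a proper subset of $b + \NN F$ whenever $c \notin$ the subgroup of units, which holds by strong convexity), so $(b,F) \neq (a,F)$ as overlap classes — in fact we need $(b,F)\not\prec(a,F)$, which follows because $b + \NN F = a + \NN F$ would force $-c \in \NN F + \ZZ F$-type comparability, contradicting strong convexity (the only unit is in $\field$, so $c = 0$).

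The last thing to verify is that $(b,F)$ is genuinely proper: $b + \NN F \subseteq \stdMono(I)$. Since $(a,F)$ is a standard pair, $a + \NN F \subseteq \stdMono(I)$; I need that no element of $(b + \NN F) \setminus (a + \NN F)$ lies in $I$. Suppose $t^{b + f} \in I$ for some $f \in \NN F$. Then since $c \in \NN F$, also $t^{b+f+c} = t^{a+f} \in I$ as $I$ is an ideal — but $a + f \in a + \NN F \subseteq \stdMono(I)$, contradiction. Hence $b + \NN F \subseteq \stdMono(I)$, so $(b,F)$ is a proper pair with $(a,F) \prec (b,F)$ and $(a,F)\neq(b,F)$, contradicting the maximality of $(a,F)$ among proper pairs. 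Therefore $t^a$ is minimal with respect to divisibility in $\{t^b \mid b \in (a+\RR F)\cap\NN A\}$.

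The main obstacle I anticipate is the step identifying $c \in \NN F$: it rests on the fact that for a face $F$ of $A$ one has $\RR F \cap \RR_{\geq 0} A = \RR_{\geq 0} F$, i.e.\ that faces are exposed and $\RR_{\geq 0}A$ is strongly convex so that no "line segment" in $\RR F$ escapes the face. This is exactly where strong convexity of $\RR_{\geq 0}A$ is used — consistent with the remark in the preliminaries that this lemma fails without strong convexity. I would make sure to state this convex-geometry fact cleanly (it follows from $F$ being a face of the cone together with strong convexity) before deploying it, since the rest of the argument is formal manipulation of the partial order $\prec$ and the ideal property.
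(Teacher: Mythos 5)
Your proof is correct and follows essentially the same route as the paper's: identify $a-b\in\RR F\cap\NN A=\NN F$, show the divisor yields a proper pair $(b,F)$ with $(a,F)\prec(b,F)$ via the same ideal-membership argument, and invoke maximality together with strong convexity (no nontrivial monomial units) to force $b=a$. The only quibble is that the fact $\RR F\cap\RR_{\geq 0}A=\RR_{\geq 0}F$ is just exposedness of faces of a polyhedral cone and does not need strong convexity; strong convexity is used only in the final step ruling out $c\neq 0$.
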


\begin{proof}
Let $a' \in (a + \RR F)\cap \NN A$ and assume that
$t^{a'}$ divides $t^a$. Our goal is to show that $a'=a$.

We claim that $(a',F)$ is a proper pair of $I$. 
To see this, let $c \in \NN F$. If $a'+c \notin \stdMono(I)$, then $t^{a'+c}
\in I$. 
Since $t^{a'}$ divides $t^a$, it follows that $t^{a+c} \in I$, so that
$a+c \notin \stdMono(I)$. 
But this contradicts the fact that $(a,F)$ is proper, and the
claim follows. Moreover, since $t^{a'}$ divides
$t^a$, we have that $a'+ \NN F \supseteq a + \NN F$, in other words,
$(a', F) \succ (a,F)$, and as $(a,F)$ is a standard pair, we see that 
$a'+ \NN F = a + \NN F$, and so $a'-a \in \NN F$ and also $a - a' \in
\NN F$. 
Hence $t^{a-a'}$ is a unit in $\field[\NN
A]$. By the strong convexity assumption, the only units in $\field[\NN
A]$ belong to $\field$, from which we conclude that $a=a'$.
\end{proof}

\begin{proof}[Proof of Theorem~\ref{thm:finitelyManyStdP}]
  By Corollary~\ref{coro:finitelyManyOverlapClasses}, it is enough to
  show that the equivalence classes under the overlap relation are
  finite. 

  Let $(a,F)$ and $(a',F)$ be overlapping standard pairs of $I$. In
  this case, $a-a' \in \ZZ F$, so that $a + \RR F = a' +\RR F$. By
  Lemma~\ref{lemma:stdPairMinimality}, this implies that $a$ and $a'$
  are minimal with respect to divisibility in $(a+ \RR F) \cap \NN
  A$. It follows that $a-a' \notin
  \NN A$ and $a'-a\notin \NN A$. Now
  apply Lemma~\ref{lemma:Dickson}.
\end{proof}

\section{Algorithms for finding and using standard pairs}
\label{sec:algorithms}

We now turn to concrete methods to compute standard pairs and use
standard pairs to produce primary and irreducible decompositions for
monomial ideals in an affine semigroup ring. 

The algorithms outlined in this article are based on three
important facts.

\begin{enumerate}[leftmargin=*]
\item \label{item:faceLattice}
The complete face lattice of the cone $\RR_{\geq 0} A$ can be computed if $A$ is given. This includes finding the primitive integral support functions (Definition~\ref{def:integerDistance}) for all the facets of $\RR_{\geq 0} A$.
\item \label{item:ILP}
  A (homogeneous or inhomogeneous) system of linear equations and
  inequalities with integer coefficients can be solved, in the sense
  that there exist algorithms to find the coordinatewise minimal
  solutions and free variables.
\item \label{item:stdPairsOverPr}
  There are algorithms to compute standard pairs for monomial ideals
  in polynomial rings. 
\end{enumerate}

We emphasize that solving linear systems over the integers is a
fundamental problem in many areas and continues to be the focus of much research, especially in convex and discrete optimization; finding the faces of a cone is an important basic question in discrete geometry. There are many
approaches to carry out the computational tasks mentioned above.
We discuss specific implementations in Section~\ref{sec:implementation}.

Relevant questions that can be easily stated as systems of
linear equations and inequalities include the following. Given $a \in \ZZ^d$, and
$F$ a finite subset of $\ZZ^d$. Does $a$ belong to $\ZZ F$? Does $a$
belong to $\NN F$? With these in hand and knowledge of the faces of
$\RR_{\geq 0} A$, we can determine, given two pairs $(a,F)$ and
$(b,G)$ of $A$, whether $(a,F) \prec (b,G)$, whether $(a,F)$ divides
$(b,G)$, and whether $(a,F)$ and $(b,F)$ overlap.

In what follows, for $F$ a face of $A$, we use $\NN^F$ to denote
$\NN^{|F|}$ with coordinates indexed by the elements of $F$. If $G$ is
another face of $A$, and $F \subset G$, then we consider the natural inclusion $\NN^F
\subset \NN^G$ where elements of $\NN^F$ are considered as elements of
$\NN^G$ whose coordinates indexed by $G\minus
F$ are zero.

The following algorithm is the main building block for computing standard
pairs in Theorem~\ref{thm:computeStdPairs}. This algorithm decomposes the difference between translations of faces as a finite union of such translations. Its proof is inspired by ideas from~\cite{HTYcomputingHoles}. 

\begin{theorem}
  \label{thm:pairDifference}
  Let $b, b' \in \NN A$ and let $G, G'$ be faces of $A$ such that $G \subseteq G'$.
The procedure below generates a set $\cup_{(a,F) \in C} (a+\NN F)$ equal to $(b+\NN G) \minus (b'+\NN G')$.
\begin{enumerate}[leftmargin=*]
\item Find the set $B=\{(u,w) \in \NN^G \times \NN^{G'} \mid b+G\cdot u = b' + G'\cdot w \}.$
\item Collect all the minimal generators of the projection of $B$ onto the first component.
\item Construct the ideal $J$ generated by the collection of minimal generators found in the previous step.
\item Find the standard pairs of $J$.
\item Return $\cup_{(u,\sigma) \in \stdPairs(J)} (b+G\cdot u + \NN \{a_i \mid i \in \sigma\})$.
\end{enumerate}
\end{theorem}

\begin{proof}
  Consider the set 
  \begin{equation}
    \label{eqn:intersection}
  \{ u \in \NN^G \mid b + G \cdot u \in (b'+\NN G') \}.
    \end{equation}
  Since $G \subseteq G'$, this 
  is the set of the exponents of the monomials in a monomial ideal $J$ in $\field[\NN^G] =
  \field[y_j \mid a_i \in G]$. Observe that
  \begin{align}
    (b+\NN G) \minus (b'+\NN G') & = \{b + G \cdot v \mid v \in \NN^G
                                   \text{ does not belong to the
                                   set}~\eqref{eqn:intersection} \} \nonumber
                                   \\
                                 & = \{b + G \cdot v \mid y^v \notin J \}   \label{eqn:difference}
  \end{align}
  Our goal is thus to find the standard monomials of $J$. First we
  determine minimal generators for $J$, which are the coordinatewise
  minimal elements of~\eqref{eqn:intersection}.

  Now consider
  \begin{equation}
    \label{eqn:upstairs}
  \{(u,w) \in \NN^G \times \NN^{G'} \mid b+G\cdot u = b' + G'\cdot w \}.
  \end{equation}
 The set~\eqref{eqn:intersection} is the projection onto the
  first component of the set~\eqref{eqn:upstairs}. 
  
  Let $\bar{u}$ be a coordinatewise minimal element
  of~\eqref{eqn:intersection}.
  Then there is $\bar{w} \in \NN^{G'}$ such that
  $(\bar{u},\bar{w})$ belongs to~\eqref{eqn:upstairs}. Let $(u,w)$ be a
  coordinatewise minimal element of~\eqref{eqn:upstairs} that is 
  coordinatewise less than or equal to $(\bar{u},\bar{w})$. It follows
  that $u$
  belongs to~\eqref{eqn:intersection} and is coordinatewise less than
  or equal to $\bar{u}$ so that
  $u=\bar{u}$. This shows that the coordinatewise minimal
  elements of~\eqref{eqn:intersection} are the projections of the coordinatewise
  minimal elements of~\eqref{eqn:upstairs}. Since the
  set~\eqref{eqn:upstairs} is the set of integer solutions of a system
  of linear equations and inequalities defined over $\ZZ$, its
  coordinatewise minimal elements can be computed. That there are
  finitely many such elements follows from Dickson's Lemma.

  Since we now know the generators of the monomial ideal $J$, we can compute its standard
  pairs and write 
  \[
    (b+\NN G) \minus (b' + \NN G') = \cup_{(u,\sigma) \in
      \stdPairs(J)} (b+G\cdot u + \NN \{a_i \mid i \in \sigma\})
    \]
  We use the convention that the standard pairs of $J \subset
  \field[\NN^G]$ are of the form $(u,\sigma)$ where $u \in \NN^G$ and
  $\sigma \subset \{ i \mid a_i \in G \}$. By definition, the fact
  that $(u,\sigma)$ is a standard pair of $J$ implies that 
  $y^u \prod_{i\in \sigma} y_i^{\lambda_i} \notin J$ for all $\lambda_i
  \in \NN$, $i\in \sigma$.

  It only remains to be proven that if $(u,\sigma)$ is
  a standard pair of $J$ then $\{a_i \mid i\in \sigma\}$ is a face of
  $G$. 

  Let $(u,\sigma)$ be a standard pair of $J$, and let $F$ be the
  smallest face of $G$ such that $\NN \{a_i \mid i \in \sigma\}$ meets
  the relative interior of $\RR_{\geq 0}F$. Let $\sum_{i\in
    \sigma} \lambda_i a_i$ be an element of the relative interior of
  $F$ with $\lambda_i \in \NN$ for $i\in \sigma$, and set $\lambda \in
  \NN^G$ whose $i$th coordinate is $\lambda_i$ if $i\in \sigma$ and
  $0$ otherwise. Then $y^{u+N \lambda} \notin J$ for all $N
  \in \NN$. Now let $a =\sum_{a_i \in F} \mu_i a_i\in \NN F$, with the
  $\mu_i\in \NN$, and let $\mu \in \NN^G$ whose $i$th coordinate is
  $\mu_i$ if $a_i \in F$ and $0$ otherwise, so that $a=G\cdot \mu$.
  Since $\sum_{i\in \sigma} \lambda_i a_i$ is in the relative interior of $\RR_{\geq
    0}F$,  we may choose $N$ large enough that $N G \cdot \lambda -a
  \in \NN F$, and we may write $G \cdot (N \lambda - \mu) = G \cdot \nu$ with $\nu \in \NN^F \subset
  \NN^G$. But then $G\cdot (\nu+\mu) = G\cdot (N\lambda)$, and as
  $b+ G\cdot(u+N \lambda) \notin b'+\NN G'$ (because $y^{u+N\lambda} \notin
  J$), we have $b +G\cdot(u+\nu+\mu) \notin b'+\NN G'$, which in turn
  implies that $y^{u+\mu} \notin J$. It follows that
  $(u,\{i\mid a_i \in F\})$ is a proper pair of $J$. Since
  $(u,\sigma)$ is a standard pair of $J$, we conclude that $\sigma =
  \{i\mid a_i\in F\}$. 
\end{proof}

We need two more auxiliary results for the computation of standard
pairs in Theorem~\ref{thm:computeStdPairs}. Here is the first one.

\begin{lemma}
  \label{lemma:minimalElements}
  Let $F$ be a face of $A$ and let $a\in \NN A$. The output of the algorithm below is the minimal elements (with respect to
  divisibility) of the set $(a + \RR F) \cap \NN A$.
\begin{enumerate}[leftmargin=*]
\item Find $\{ u \in \NN^n \mid \varphi_H(A \cdot u) = \varphi_H(a) \text{ for all } H
    \text{ facet of } A, H \supseteq F \}$ where $\varphi_{H}$ is the primitive integral support function
  of the facet $H$ of $A$.
\end{enumerate}
\end{lemma}

\begin{proof}
 The primitive integral support functions
  of the facets of $A$ (Definition~\ref{def:integerDistance})
  are linear forms with integer coefficients, and which can be computed.
  Then $b \in (a + \RR F)$ if and only if $\varphi_H(b) = \varphi_H(a)$ for all facets $H$
  of $A$ containing $F$.
  The elements of $(a + \RR F) \cap \NN A$ that are minimal with
  respect to divisibility are the elements of the form $A \cdot u$,
  where $u$ is a coordinatewise minimal element of:
  \begin{equation}
    \label{eqn:translate}
    \{ u \in \NN^n \mid \varphi_H(A \cdot u) = \varphi_H(a) \text{ for all } H
    \text{ facet of } A, H \supseteq F \}.
  \end{equation}
  This set is given by integer linear equations and inequalities, and its
  coordinatewise minimal elements can be computed.
\end{proof}

\begin{definition}
  \label{def:cover}
  Let $I$ be a monomial ideal in $\field[\NN A]$ whose set of standard monomials is $\stdMono(I)$. A \emph{cover} of $\stdMono(I)$ is a finite collection $C$ of pairs of $A$ such that
  \[
    \stdMono(I) = \cup_{(a,F)\in C} (a+\NN F).
  \]
\end{definition}

\begin{proposition}
  \label{prop:refineCover}
  Let $I$ be a monomial ideal in $\field[\NN A]$. The algorithm below has a cover of standard monomials of $I$ as its input. Its output is the set of standard pairs of $I$.
\begin{enumerate}[leftmargin=*]
\item For each $(a,F) \in C$, find $\{ (b,F) : b \in  (a + \RR F) \cap \NN A\}$ using Lemma~\ref{lemma:minimalElements}.
\item Let $C_{1}$ be the union all the sets from the previous step.
\item For each $(a,F) \in C_{1}$ and $G$ a face that is not strictly contained in $F$,
\begin{enumerate}[leftmargin=*]
\item If there is a pair $(b,G') \in C_{1}$ for $G' \supseteq G$,
\begin{enumerate}[leftmargin=*]
\item If $(a+\NN G) \minus (b + \NN G') \subsetneq a + \NN G$
\begin{enumerate}[leftmargin=*]
\item Delete $(a,F)$ in $C_{1}$ and append all maximal pairs of the decomposition of $(a+\NN G) \minus (b + \NN G')$ by Theorem~\ref{thm:pairDifference}.
\end{enumerate}
\end{enumerate}
\end{enumerate}
\item Suppose $C_{2}$ is the changed collection from $C_{1}$ by the previous step. 
\item If $C_{2} \neq C$, set $C:= C_{2}$ and go to (1). Otherwise, return $C_{2}$.
\end{enumerate}
\end{proposition}

\begin{proof}
  Let $C_0$ be a cover of the standard monomials of $I$. Then all the
  elements of $C_0$ are proper pairs of $I$. 
  For $(a,F) \in C$, if $b \in \NN A$ divides $a$, then $(b,
  F)$ is also a proper pair of $I$.
  
  For each $(a,F) \in C_0$, use Lemma~\ref{lemma:minimalElements} to
  compute the minimal elements with respect to divisibility of $(a+\RR F) \cap \NN A$, and replace $(a,F)$ by the
  collection of pairs $(b,F)$, where $b$ is a minimal element of 
  $(a+\RR F) \cap \NN A$ that divides $a$. In this way we obtain
  another collection of pairs $C_1$, which is also a cover of the standard
  monomials of $I$.

  Next, given $(a,F)$ in $C_1$, and $G$ a face of $A$ that is not strictly contained
  in $F$, we can determine algorithmically whether $(a,G)$ is
  a proper pair of $I$, as follows. First, if $C_1$ contains no pairs
  of the form $(b,G') \in C_1$ with $G'
  \supseteq G$, then $(a,G)$ is not proper. Otherwise, find whether
  there is a pair  $(b,G') \in C_1$ with $G'
  \supseteq G$ such that 
  $(a+\NN G) \minus (b + \NN
  G') \subsetneq a + \NN G$. If no such pair exists, $(a,G)$ is not
  proper. This is because, when $(a,G)$ is proper, the elements of $a+\NN G$ are exponents of standard
    monomials. Since $C_1$ is a cover of standard monomials, $a+ \NN G
  = (a + \NN G)  \cap (\cup_{(b,F)\in C_1} (b+\NN F)) = \cup_{(b,F)\in
    C_1}\big( (a+\NN G) \cap (b+\NN F) )$. Each intersection $(a+\NN
  G) \cap (b+\NN F)$ is a finite union of sets $c+\NN F'$ where $F'
  \subset G\cap F \subset G$. If all the intersections involve faces
  that are strictly contained in $G$, then we have written $a+\NN G$ 
  as a finite union of sets $c+\NN F'$ with $F'$ strictly
  contained in $G$, which is impossible for dimension reasons.

  If such a pair exists, $(a+\NN G) \minus (b + \NN
  G')$ is a union of sets of the form $a'+\NN G''$ where $G''$ is a
  proper face of $G$, so we reduce to verifying whether the pairs
  $(a',G'')$ in the union are proper pairs of $I$. This yields an
  iterative procedure to determine whether $(a,G)$ is proper. 

  If $(a,F) \in C_1$, replace $(a,F)$ by all pairs of the form
  $(a,G)$, where $G$ is not strictly contained in $F$, $(a,G)$ is proper for
  $I$, and $G$ is maximal with this property. We obtain a finite collection of pairs $C_2$, which is still a
  cover for the standard monomials of $I$. Now apply to $C_2$ the same
  procedure we used to go from $C_0$ to $C_1$ to construct a new cover
  $C_3$, and apply to $C_3$ the same procedure we applied to $C_1$, to
  get a new cover $C_4$.
  
  We claim that repeating this process yields, after finitely many
  iterations, a cover $C$ which is stable under the given operations.  
  To see this, first, our procedure 
  replaces a proper pair by a collection of proper pairs, all of which
  are greater than or equal to the original pair with respect to the
  partial order $\prec$ (see
  Definitions~\ref{def:Pairs} and~\ref{def:standardPairs}). Next, the set of proper pairs of $I$ has finitely many elements that are maximal with respect to $\prec$,
  namely the standard pairs (Theorem~\ref{thm:finitelyManyStdP}). Finally, 
  $\prec$-chains that are bounded above are finite, because of the
  strong convexity assumption on $\RR_{\geq 0}A$. From these
  observations it follows that 
  our procedure arrives at a stable cover $C$ after finitely many steps.
  
  The stable cover $C$ has the
  following properties
  \begin{itemize}[leftmargin=*]
    \item If
  $(a,F) \in C$ and $F'$ is a face of $A$ that strictly contains $F$,
  then $(a,F')$ is not a proper pair of $I$.
    \item If $(a,F) \in C$ and $(a,G)$ is a proper pair of $I$, then
      $C$ contains a pair $(a,G')$ such that $G'\supseteq G$.
    \end{itemize}
    
  We claim that $C$ contains the standard pairs of $I$.
  
  Let $(b, G)$ be a standard pair of $I$, and let $(a,F) \in C$ such that $b \in
  a+\NN F$. Every element of $a+\NN G$ divides some element of $b+\NN
  G$. This implies that $(a,G)$ is a proper pair of $I$ since
  $(b,G)$ is proper. By construction of $C$, $C$ contains a pair
  $(a,F')$ such that $F'$ contains $G$. Then $b+\NN G \subset a+\NN
  F'$. Since $(a,F')$ is proper, and $(b,G)$ is standard, we must have
  $(a,F')=(b,G)$, which means that $(b,G) \in C$.

  Thus, in order to obtain the standard pairs of $I$, we select the
  elements of $C$ that are maximal with respect to $\prec$.
\end{proof}

We are now ready to compute standard pairs.

\begin{theorem}
  \label{thm:computeStdPairs}
The algorithm below has the set of (monomial) generators of a
  monomial ideal $I = \< t^{b_1},t^{b_2}, \cdots, t^{b^{n}}\>$ in $\field[\NN A]$ as its input. Its output is the set
  of standard pairs of $I$.
\begin{enumerate}[leftmargin=*]
\item Find a cover $C$ of $I_{1}:=\< t^{b_1}\>$ by refining the cover, obtained from the decomposition of $\NN A  \minus (b+\NN A)$ using Theorem~\ref{thm:pairDifference} and Proposition~\ref{prop:refineCover}.
\item If $n=1$, return the cover. Otherwise, let $C$ be the cover found by the previous step. Set $i=2$.
\begin{enumerate}[leftmargin=*]
\item For $(a,F) \in C$, 
\begin{enumerate}[leftmargin=*]
\item Compute the decomposition $(a+\NN F ) \minus (b_i + \NN A)$ from Theorem~\ref{thm:pairDifference} and save them on $C'$.
\end{enumerate}
\item Refine $C'$ using Proposition~\ref{prop:refineCover}. If $i=n$, return the refined $C'$. Otherwise, set $C:=C'$ and go to (a) with increased $i$ by $1$.
\end{enumerate}
\end{enumerate}
  
\end{theorem}

\begin{proof}
  Suppose that $I = \< t^b \>$. Then the set of standard monomials of
  $I$ is $\NN A  \minus (b+\NN A)$, and we can compute the standard
  pairs of $I$ using Theorem~\ref{thm:pairDifference} and
  Proposition~\ref{prop:refineCover}.

  If $I = \< t^{b_1},t^{b_2}\>$, first compute the standard pairs of
  $\<t^{b_1}\>$ and for each such standard pair $(a,F)$, compute
  $(a+\NN F ) \minus (b_2 + \NN A)$. This yields a cover of the
  standard monomials of $I$, which can be massaged using
  Proposition~\ref{prop:refineCover} to obtain the standard pairs of
  $I$.

  In general, if the standard pairs of $\<t^{b_1},\dots,t^{b_\ell}\>$
  are known, then we may use the same idea to compute the standard
  pairs of $\<t^{b_1},\dots,t^{b_\ell}, t^{b_{\ell+1}}\>$.
  
  Finally, finding the overlap classes and determining the maximal
  ones with respect to divisibility can be done by finding whether
  certain linear systems of equations and inequalities have integer solutions.
\end{proof}

\begin{remark}
  Having computed the (overlap classes of) standard pairs of $I$, the associated primes of
  $I$ and their corresponding multiplicities can be computed by inspection.
\end{remark}

\begin{example}[Continuation of Example~\ref{ex:ideals}\eqref{item:2dnormal}]
We return to $I= \langle x^2y^2, x^3y \rangle \subset
\field[x,xy,xy^2] \cong \field[\NN A]$ for $A=\big[\begin{smallmatrix}
  1 & 1 & 1 \\ 0 & 1 & 2 \end{smallmatrix}\big]$. In this case, we
illustrate how to compute the standard pairs of $I$ using the
method described in Theorem~\ref{thm:computeStdPairs}.

First we apply Theorem~\ref{thm:pairDifference} to $\NN A \minus
((2,2)+\NN A)$, to obtain a cover of standard monomials for
$I_{0}=  \langle x^2y^2 \rangle$. In this
case, the set~\eqref{eqn:upstairs} turns out to be 
\[
  \{(u,w) \in \NN^A \times \NN^{A} \mid b+A\cdot u = (2,2)^{t}+ A\cdot w \} = \{(u,w) \in \NN^A \times \NN^{A} \mid A(u-w)= (2,2)^{t} \}.
\]
A straightforward calculation shows that this set is the same as 
\[
\{(u,w): u-w = (0,2,0) \text{ or } (1,0,1) \}= \{ (u+w,w): u=(0,2,0) \text{ or }(1,0,1) , w \in \NN^{A}\}.
\]
It follows that the minimal solutions we are looking for are $(0,2,0)
\text{ and } (1,0,1)$. We see that the ideal $J \subset \field[\NN^A]
= \field[z_1,z_2,z_3]$ in the proof of
Theorem~\ref{thm:pairDifference} equals $\langle z_2^{2}, z_1z_3 \rangle$ and its
standard pairs are 
\[
\big((0,0,0),\{(0,0,1)\}\big), \; \big((0,1,0),\{(0,0,1)\}\big), \;
\big((0,0,0),\{(1,0,0)\}\big), \; \text{ and } \big((0,1,0),\{(1,0,0)\}\big).
\]
Thus, 
\[
\NN A \minus \bigg((2,2)^{t}+\NN A\bigg) = \NN \{(1,2) \} \cup \bigg( (1,1)+ \NN \{
(1,2)\}\bigg) \cup \NN \{(1,0)\} \cup \bigg((1,1)+ \NN \{(1,0)\}\bigg).
\]
These form a cover of the standard monomials of $\langle
x^2y^2\rangle$, and it is easily checked that this is the cover by
standard pairs of the standard monomials of $\langle x^2y^2\rangle$.

Now, to find the standard pairs of $I$, we compute $b+ \NN F \minus
\big((3,1)+\NN A\big)$ for each $b+ \NN F$ where $(b,F) \in
\stdPairs(\langle x^2y^2\rangle)$. The sets $\NN \{(1,2)\}$,
$(1,1)+\NN\{(1,2)\},$ and $\NN \{(1,0\}$ have an empty intersection with
$(3,1)+\NN A$. Hence we only need to compute $\big((1,1)+\NN
\{(1,0)\})\big) \minus \big( (3,1)+\NN A\big)$.

We apply Theorem \ref{thm:pairDifference} to do this, yielding $J=
\langle z_1^{2} \rangle$ in $\field[z_1]$. This ideal has two standard pairs
$(0,\varnothing)$ and $(1,\varnothing)$. It follows that
\[
\big((1,1)+\NN \{(1,0)\}\big) \minus \big( (3,1)+\NN A \big)= \{(0,0),
(1,1)\}
\]
The following is a cover of standard monomials of $I$.
\[
\{  \big((0,0), \{(1,2) \}\big) , \; \big( (1,1), \{ (1,2)\}\big), \;
\big((0,0), \{(1,0)\}\big) ,\; \big((0,0),\varnothing\big),
\big((1,1), \varnothing \big)\}
\]
We next use Proposition~\ref{prop:refineCover} to remove
$((0,0),\varnothing)$. Finally, the set of standard pairs of $I$ is 
\[
 \big((0,0), \{(1,2) \}\big), \;  \big( (1,1), \{ (1,2)\}\big), \; \big((0,0), \{(1,0)\}\big),\;  \text{ and } \big((1,1), \varnothing\big),
\]
as depicted in Figure \ref{fig:2dnormal_std}.
\end{example}

We now know how to compute the standard pairs of a monomial ideal $I$
in $\field[\NN A]$. It is natural to try to reverse the process and
find generators of a monomial ideal whose standard pairs are given.

\begin{theorem}
  \label{thm:pairsToGens}
The algorithm below has the set $C$ of standard pairs
  of a monomial ideal in $\field[\NN A]$ as its input. Its output is the set of
  generators for this ideal.
\begin{enumerate}[leftmargin=*]
\item For each $(a,F) \in C$
\begin{enumerate}[leftmargin=*]
\item Compute the decomposition $D$ of $a+\NN F \minus \big( \cup_{(b,G) \in \stdPairs(I) \mid 
    G \neq F} b+\NN G \big)$ using Theorem~\ref{thm:pairDifference}.
\item Set $J=\emptyset$.
\item For each $(\alpha,F') \in D$,
\begin{enumerate}[leftmargin=*]
\item Append $\alpha +a_{i}$ for all $a_{i} \in A \minus F$ to $J$
\end{enumerate}
\end{enumerate}
\item Let $J_1$ be the ideal generated by $J$; find its standard pairs of the ideal using Theorem~\ref{thm:computeStdPairs}.
\item If the set of standard pairs of $J_{1}$ coincides with $C$, return $J_{1}$. 
\item Otherwise, find standard pairs of $J_{1}$ that are not standard pairs in $C$. 
\item For such a standard pair $(a,F)$,
\begin{enumerate}
\item Find an element $b \in (a+\NN F) \cap I$ and add it to $J_{1}$.
\end{enumerate}
\item Go to (3).
\end{enumerate}
\end{theorem}

\begin{proof}
  If the standard pairs of a monomial ideal $I$ are known, we can
  determine which overlap classes are maximal with respect to
  divisibility (among all pairs belonging to the same face). For each
  standard pair $(a,F)$ in such an overlap class, we can compute
  $a+\NN F \minus \big( \cup_{(b,G) \in \stdPairs(I) \mid 
    G \neq F} b+\NN G \big)$ as a union over pairs $(\alpha,F')$ of
  sets $a+\NN F'$. For each such pair $(\alpha,F')$ and each $a_i$ mid $a_i \in A \minus F$, check whether $t^{a_i}t^{\alpha} \in I$ using the given standard pair.
  Let $J_1$ be the ideal generated
  by all monomials $t^{a_i}t^{\alpha}$ obtained in this way not in any standard pair. Then $J_1 \subset I$.

  Compute the standard pairs of $J_1$. If they coincide with the
  standard pairs of $I$, then $J_1=I$ and we are done.

  Otherwise, pick maximal overlap classes of standard pairs of $J_1$
  that are not standard pairs of $I$, remove all standard pairs of
  $I$, and use this to find elements of $I$ that do not belong to
  $J_1$. Obtain an ideal $J_2 \supsetneq J_1$.

  Repeat this procedure. Since $\field[\NN A]$ is Noetherian, this
  process must arrive at $I$ in a finite number of steps.
\end{proof}

\begin{remark}
  \label{rmk:computeIntersections}
  We observe that standard pairs can be used to compute intersections
  of monomial ideals. If $I$ and $J$ are monomial ideals in
  $\field[\NN A]$, then the union of the collections of standard pairs
  of $I$ and $J$ is a cover for the standard monomials of $I\cap
  J$. Applying Proposition~\ref{prop:refineCover} yields the standard pairs
  of $I\cap J$, and we can compute generators using
  Theorem~\ref{thm:pairsToGens}.
\end{remark}

\begin{example}
  We return to Example~\ref{ex:2dirred_decomposition_unique}~(ii). In this case, the standard pairs are $\big((0,0), \{(2,0)\}\big)$,
  $\big((0,1),\{(2,0)\}\big),$ and $\big((1,1),\{(2,0)\}\big)$; we wish
  to recover the generators of the ideal from this information, using
  the method from Theorem~\ref{thm:pairsToGens}. If we start with the
  standard pair $\big((0,1),\{(2,0)\}\big)$, we obtain the ideal 
  $J_{1} = \langle xy^{2},x^{2}y^{2} \rangle$. Using
  $\big((0,0),\{(2,0) \}\big)$ next, we find the
  monomials $y^{2},xy^{2}$, which generate $I$. 
\end{example}

  We can now compute irreducible decompositions of monomial ideals in
  $\field[\NN A]$.

\begin{theorem}
  \label{thm:computeIrreducibleDecomposition}
The algorithm below has the set $C$ of standard pairs
  of a monomial ideal $I$ in $\field[\NN A]$ as its input. Its output is the set of
  irreducible decomposition for $I$.
\begin{enumerate}[leftmargin=*]
\item Find an overlap class in $C$ which is maximal with respect to divisibility.
\item For such maximal overlap classes $[\bar{a},F]$,
\begin{enumerate}[leftmargin=*]
\item Find $U=\bigcup_{(a,F) \in [\bar{a},F]} \{ (u,v,w) \in \NN^A \times \NN^A
    \times \NN^F \mid A\cdot u + A\cdot v = a + F
    \cdot w \}$.
\item Project the set above as $\pi(U):=\{ u \in \NN^A : (u,v,w) \in U\}$. 
\item For each $u \in \pi(U)$, 
\begin{enumerate} 
\item Append all components of the decomposition $\NN A \minus u+\NN A$ by Theorem~\ref{thm:pairDifference} to the collection $C$.
\end{enumerate}
\item Generate the ideal $\pf$ whose standard cover is $C$ by Theorem~\ref{thm:pairsToGens}.
\end{enumerate}
\end{enumerate}
\end{theorem}

\begin{proof}
  Given the standard pairs of $I$, we can determine the associated
  primes of $I$. If $\pf$ is associated to $I$, let $[\bar{a},F]$ be an
  overlap class of standard pairs of $I$ that is maximal with respect
  to divisibility (among overlap classes belonging to $F$).

  Let $(a,F)$ be a standard pair of $I$ whose overlap class is
  $[\bar{a},F]$. Define
  \begin{equation}
    \label{eqn:divisors}
    \bigcup_{(a,F) \in [\bar{a},F]} \{ (u,v,w) \in \NN^A \times \NN^A
    \times \NN^F \mid A\cdot u + A\cdot v = a + F
    \cdot w \}.
  \end{equation}
We see that $u$ belongs to the projection of~\eqref{eqn:divisors} onto the
  first factor if and only if $A\cdot u$ divides an element of $a+\NN
  F$ where $(a,F) \in [\bar{a},F]$. Using
  Theorem~\ref{thm:primaryDecomposition} and
  Proposition~\ref{prop:irreducibleDecomposition}, 
  we see that these are (exponents of) the standard monomials in a
  valid irreducible component of $I$. 

  Adapting the method from Theorem~\ref{thm:pairDifference},
  we can find a cover of the standard monomials of this irreducible component.  
  Proposition~\ref{prop:refineCover} yields the corresponding standard
  pairs, and Theorem~\ref{thm:pairsToGens} provides generators.
\end{proof}

\begin{remark}
\label{rmk:computePrimaryDecomposition}
We can adapt the proof of Theorem~\ref{thm:computeIrreducibleDecomposition} to compute primary
components. Alternatively, we can compute the irreducible components
first, and then use Remark~\ref{rmk:computeIntersections} to intersect
all irreducible components associated to the same prime, yielding the corresponding primary component.
\end{remark}

\section{Implementation}
\label{sec:implementation}

In this section, we briefly discuss ongoing work towards implementation of the
algorithms presented in Section~\ref{sec:algorithms} in the computer
algebra system \texttt{SageMath}~\cite{SAGE} and \texttt{Macaulay2}~\cite{M2}. 

All algorithms in Section~\ref{sec:algorithms} were implemented in the \texttt{SageMath} called \texttt{StdPairs}~\cite{Yu22}. The three key ingredients in our algorithms are computing the face lattice of a cone, solving linear systems of equations and inequalities over $\ZZ$, and finding standard monomials of standard pairs in polynomial rings. The first can be done in \texttt{StdPairs} using the \texttt{Polyhedra} module of \texttt{SageMath}. \texttt{StdPairs} calculates all solution sets of integer linear equations for algorithms in Section~\ref{sec:algorithms} using a command \texttt{zsolve} from the software \texttt{4ti2}~\cite{4ti2}. Lastly, \texttt{StdPairs} uses the command \texttt{standardPairs} in \texttt{Macaulay2} (for details see~\cite{MR1949544}) to compute standard pairs over polynomial rings. All computational results of \texttt{StdPairs} can be archived as a binary file so that users may avoid repeated calculations.

Since \texttt{Macaulay2} also has access to the software \texttt{Polyhedra}~\cites{PolyhedraSource, PolyhedraArticle} and \texttt{4ti2}, the \texttt{StdPairs} package can be translated into a package for \texttt{Macaulay2}. The author of the \texttt{SageMath} version \texttt{StdPairs} package is currently translating \texttt{StdPairs} as a \texttt{C++} library. This will provide a \texttt{Macaulay2} interface for this package similar to the one for \texttt{4ti2} and we hope it will also increase the speed of computation. 

\bibliographystyle{plain}
\bibliography{stdPairs}
\end{document}